\tikzset{cross/.style={cross out, draw=black, fill=none, minimum size=2*(#1-\pgflinewidth), inner sep=0pt, outer sep=0pt}, cross/.default={2pt}}
\theoremstyle{definition}
\newtheorem*{defintro}{Definition}
\newtheorem{thmintro}{Theorem}
\newtheorem{theorem}{Theorem}[section]
\newtheorem{definition}[theorem]{Definition}
\newtheorem{proposition}[theorem]{Proposition}
\newtheorem{example}[theorem]{Example}
\newtheorem{lemma}[theorem]{Lemma}
\newtheorem{corollary}[theorem]{Corollary}
\newtheorem{conjecture}[theorem]{Conjecture}
\newtheorem{construction}[theorem]{Construction}
\theoremstyle{remark}
\newtheorem{remark}[theorem]{Remark}
\newcommand{\A}{\mathbb{A}}
\newcommand{\C}{\mathbb{C}}
\newcommand{\D}{\Delta}
\newcommand{\Dp}{\Delta^\star}
\newcommand{\F}{\mathbb{F}}
\renewcommand{\L}{\mathcal{L}}
\renewcommand{\O}{\mathcal{O}}
\renewcommand\P{\mathbb{P}}
\newcommand{\R}{\mathbb{R}}
\newcommand{\BP}{(B,\mathcal{P})}
\newcommand{\DIC}{(B,\mathcal{P},\varphi)}
\newcommand{\IC}{(\check{B},\check{\mathcal{P}},\check{\varphi})}
\newcommand{\PD}{\mathbb{P}_\Delta}
\newcommand{\XcDegen}{\check{\mathcal{X}}}
\newcommand{\YD}{\mathbb{P}_\Delta}
\def\const{{\sf Const}}
\begin{document}

	\title[GW invariants and mirror symmetry for non-Fano toric varieties]{Gromov--Witten invariants and mirror symmetry for non-Fano varieties via tropical disks}

\author{Per Berglund}
\address{University of New Hampshire \\ Department of Physics and Astronomy \\ 105 Main St, Durham, NH 03824 \\ USA} 
\email{per.berglund@unh.edu}

\author{Tim Gr\"afnitz}
\address{Leibniz-Universit\"at Hannover \\ Institut f\"ur Algebraische Geometrie \\ Welfengarten 1, 30167 Hannover \\ Germany} 
\email{graefnitz@math.uni-hannover.de}

\author{Michael Lathwood}
\address{University of New Hampshire \\ Department of Physics and Astronomy \\ 105 Main St, Durham, NH 03824 \\ USA} 
\email{michael.lathwood@unh.edu}


\begin{abstract}
Under mirror symmetry a non-Fano variety $X$ corresponds to an instanton corrected Hori-Vafa potential $W$. The classical period of $W$ equals the regularized quantum period of $X$, which is a generating function for descendant Gromov-Witten invariants. These periods define closed mirror maps relating complex with symplectic parameters and open mirror maps relating coordinates on the mirror curves. 

We interpret the corrections to $W$ by broken lines in a scattering diagram, so that $W$ is the primitive theta function $\vartheta_1$. We show that, after wall crossing to infinity and application of the closed mirror map, $W=\vartheta_1$ is equal to the open mirror map. By tropical correspondence, $\vartheta_1$ is a generating function for $2$-marked logarithmic Gromov-Witten invariants, which are algebraic analogues of counts of Maslov index $2$ disks. This generalizes the predictions of mirror symmetry to the non-Fano case.
\end{abstract}

\maketitle
\setcounter{tocdepth}{1}
\tableofcontents
\makeatletter

\section{Introduction} 

\subsection{Fano mirror symmetry} 

Under mirror symmetry, Fano varieties $X$ correspond to Landau-Ginzburg models $W : \check{X} \rightarrow \mathbb{C}$. If $X$ admits a $\mathbb{Q}$-Gorenstein degeneration to a toric variety $X_\Sigma$, then $W$ can be taken to be the Hori-Vafa potential $W_\Sigma$, which is a Laurent polynomial given by the sum of toric monomials labelled by the rays of the fan $\Sigma$. There are infinitely many such degenerations and the corresponding potentials $W_\Sigma$ are related by an operation called mutation \cite{ACGK}. The classical period $\pi_W$ of the Laurent polynomial $W$ equals the regularized quantum period $G_X$, which is defined as a generating function for descendant Gromov-Witten invariants of $X$ \cite{MSfano}. This period can also be obtained as the holomorphic part of the logarithmic solution to a Picard-Fuchs type differential equation \cite{LM}. 

The period defines closed and open mirror maps. Closed mirror maps relate the complex parameters $z$ to the symplectic parameters $Q$. The open mirror map relates the coordinate $x$ on the mirror curve, which is the moduli space of A-branes in the local Calabi-Yau variety $K_X$, to the symplectic area parameter $U$ of the dual B-branes. It is expected from mirror symmetry and proved in many fashions \cite{CCLT}\cite{GRZ}\cite{LLW}\cite{LLL}\cite{You} that the open mirror map, after insertion of the closed mirror maps, is a generating function for counts of holomorphic disks or, in the algebraic language, $2$-marked logarithmic Gromov-Witten invariants. A $q$-refined (``quantum'') version of this statement is worked out in \cite{GRZZ}. 

Another point of view \cite{CPS} is that the potential $W$ is equal to the primitive theta function $\vartheta_1$ in the central chamber of the scattering diagram defined by $X$. This is defined as a sum over primitive broken lines ending in the central chamber. In the Fano case, all such broken lines are actually straight lines, and there is one for each ray of $\Sigma$ (if $X$ admits a $\mathbb{Q}$-Gorenstein degeneration to $X_\Sigma$), so one recovers the Hori-Vafa potential $W_\Sigma$. After wall crossing to infinity (i.e. to an unbounded chamber), $\vartheta_1$ becomes a generating function for $2$-marked logarithmic Gromov-Witten invariants and, as shown in \cite{GRZ}, equal to the open mirror map. This also confirms the intrinsic mirror symmetry perspective \cite{GSintrinsic} that $\vartheta_1$ can be considered as a canonical variable on the mirror to $X$.

\begin{example}
For $X=\mathbb{P}^2$ we have $W=x+y+zx^{-1}y^{-1}$ with a complex parameter $z$, hence
\[ \pi_W(z) = \sum_{k>0} \const_{x,y}(W^k) = \sum_{k>0} \frac{(3k)!}{(k!)^3}z^k = 6z+90z^2+1680z^3+\ldots, \]
where $\const_{x,y}$ means taking the constant term with respect to the variables $x$ and $y$. Consider the formal logarithmic anti-derivative
\[ a_W(z) = \int \pi_W(z) \frac{dz}{z} = \sum_{k>0} \frac{1}{k}\const_{x,y}(W^k) = -\const(\log(1-W)). \]
The closed mirror map relates $z$ to a symplectic parameter $Q$ by
\[ Q = \exp(\log z + 3a_W(-z)) = ze^{3a_W(z)} = z - 6z^2 + 63z^3 - 866z^4 + \ldots, \]
where the factor $3$ is the intersection multiplicity of the line class $L$ with the excpetional class $-K_{\mathbb{P}^2}=3L$. The open mirror map relates the coordinate $x$ on the mirror curve to the symplectic area $U$ of the dual brane,
\[ U = \exp(\log x - a_W(-z)) = xe^{-a_W(z)} = x(1+2z-13z^2+158z^3+\ldots). \]
Inserting the inverse closed mirror map $z(Q)$ into the inverse open mirror map $x(U)$ we get
\[ x = UM_W(Q), \quad M_W(Q) = e^{a_W(-z(Q))} = 1-2Q+5Q^2-32Q^3+\ldots \]
The coefficients of $M_W(Q)$ are open Gromov-Witten invariants of $K_{\mathbb{P}^2}$ with winding one relative to a moment map fiber or an outer Aganagic-Vafa brane $N(K_{\mathbb{P}^2},1L)=-2$, $N(K_{\mathbb{P}^2},2L)=5$, $N(K_{\mathbb{P}^2},3L)=-32$. They are related to $2$-marked logarithmic Gromov-Witten invariants $R_{1,3d-1}(\mathbb{P}^2,dL)$ counting rational curves of degree $d$ intersecting an elliptic curve $E$ in a fixed point with multiplicity $1$ and in a non-fixed point with multiplicity $3d-1$, via $N(K_{\mathbb{P}^2},dL)=\frac{(-1)^d}{3d-1}R_{1,3d-1}(\mathbb{P}^2,dL)$.
\end{example}

\subsection{Non-Fano mirror symmetry} 

The aim of this paper is to extend the above framework to the case of non-Fano varieties $X$. Let $D$ be a smooth anticanonical divisor of $X$ and consider the smooth log Calabi-Yau pair $(X,D)$.

\begin{defintro}
A function $W\in\mathbb{C}[z_1,\ldots,z_r][x_1^{-1},\ldots,x_n^{-1}]\llbracket x_1,\ldots,x_n\rrbracket$ is called \emph{mirror dual} to $(X,D)$ if the classical period
\[ \pi_W(z) = \sum_{k>0}\const(W^k) \in \mathbb{C}[z_1,\ldots,z_r] \]
equals the regularized quantum period $G_X(z)$ (Definition \ref{defi:G}) of $X$.
\end{defintro}

\begin{thmintro}[Theorem \ref{thm:theta}]
\label{thm:main0}
The primitive theta function $\vartheta_1(X,D)$ is a mirror potential of $X$ in the sense that the classical period $\pi_{\vartheta_1(X,D)}(z)$ is equal to the regularized quantum period $G_X(z)$. This is true in any chamber of the scattering diagram.
\end{thmintro}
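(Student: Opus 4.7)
The plan is to split the statement into two pieces: first, that $\pi_{\vartheta_1(X,D)}$ is independent of the chamber in which $\vartheta_1$ is computed, and second, that its value in some conveniently chosen chamber (say the central one) coincides with $G_X(z)$. For chamber independence I would argue as follows. Crossing a wall $\mathfrak{d}$ of the scattering diagram modifies $\vartheta_1$ by the wall-crossing automorphism $\theta_{\mathfrak{d}}$, which is an exponential of a log-derivation whose associated monomial carries strictly positive $z$-degree and a non-trivial $x$-direction. Expanding $\theta_{\mathfrak{d}}(\vartheta_1)^k - \vartheta_1^k$ and collecting by order in $z$, every term contains a non-constant monomial in $(x_1,\dots,x_n)$, hence contributes zero under $\const$. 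Equivalently, consistency of the scattering diagram implies that $\const$ is a trace on the algebra of theta functions, so $\pi_{\vartheta_1}$ is a well-defined invariant of $(X,D)$ alone.

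In the central chamber I would expand $\vartheta_1$ as a formal sum of monomials indexed by primitive broken lines. By the scattering/logarithmic correspondence (Gross--Pandharipande--Siebert and its refinements used earlier in the paper), the coefficient of each monomial $z^\beta x^{v_\beta}$ in $\vartheta_1$ is a $2$-marked logarithmic Gromov--Witten invariant of $(X,D)$ with one point of maximal tangency along $D$. Applying $\const(\vartheta_1^k)$ then selects $k$-tuples of broken lines whose direction vectors sum to zero; tropically this is the operation of gluing $k$ Maslov-index-$2$ tropical disks into a closed tropical curve passing through a fixed point of $D$ with maximal tangency. The logarithmic degeneration/gluing formula converts these configurations into closed log Gromov--Witten invariants of $(X,D)$ in class $\beta$, and the standard identification between maximum-tangency log invariants of $(X,D)$ and descendant invariants of $X$ (the $\psi$-class reduction underlying the quantum Lefschetz / regularized period formalism, cf.\ Definition \ref{defi:G}) identifies the resulting sum with the coefficient of $z^\beta$ in $G_X(z)$.

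The main obstacle is the last identification. The tropical correspondence supplies an enumerative bridge from broken-line products to log invariants, but showing that the bookkeeping — automorphism factors of broken-line configurations, combinatorial multiplicities coming from the multinomial expansion of $\vartheta_1^k$, and the conversion from maximum-tangency log invariants to descendant invariants with the correct power of $\psi$ and the correct combinatorial prefactor $(\,\cdot\,!)$ — matches $G_X$ exactly will require carefully applying the string/divisor equations on the log moduli space and tracking the degree/tangency constraints. Once this matching is verified for a single term, linearity in $\beta$ and the chamber independence established above yield the full equality $\pi_{\vartheta_1(X,D)} = G_X$.
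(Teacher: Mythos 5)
Your first step (chamber independence) is sound and agrees in substance with the paper: wall crossing acts by the automorphisms $\theta_{\mathfrak{d}}$, which are mutations (Proposition \ref{prop:crossmut}), and mutations preserve the classical period; equivalently, the structure constants of theta-function multiplication do not depend on the endpoint (Proposition \ref{prop:multiplication}). Your identification of $\const(\vartheta_1^k)$ with glued $k$-tuples of broken lines, i.e.\ with tropical curves carrying a $k$-valent vertex at the chosen general point $P$, is also exactly the paper's Proposition \ref{prop:theta}.

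The gap is in the geometric interpretation of these glued configurations, which is the heart of the theorem. The $k=\beta\cdot D$ unbounded legs of the glued tropical curve each have weight $1$, so the corresponding stable log maps meet $D$ transversally in $\beta\cdot D$ distinct, unconstrained points; they are \emph{not} maximal-tangency curves, and the constrained point $P$ is an interior point of $X$, not a point of $D$. The correct statement --- and the one the paper proves --- is that a vertex of valency $\beta\cdot D$ at a prescribed general interior point is the tropical avatar of the descendant insertion $\psi^{\beta\cdot D-2}\textup{ev}^\star[\textup{pt}]$, so the tropical count in $\mathfrak{H}^\psi_{0,1}(B,\beta)_P$ equals $D_{0,1}(X,\beta)$ directly (Theorem \ref{thm:tropcorrdesc}, which combines the degeneration argument of \S\ref{S:idea} with the Mandel--Ruddat descendant correspondence applied locally at the high-valency vertex). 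Since $G_X$ is by definition the generating series of the $D_{0,1}(X,\beta)$ (Definition \ref{defi:G}), no further conversion is needed. Your proposed bridge --- reading off $2$-marked invariants from the coefficients of $\vartheta_1$ in the central chamber (this holds only in unbounded chambers, Corollary \ref{cor:theta}; in the central chamber of a Fano the coefficients are just those of the Hori--Vafa polynomial), passing to maximal-tangency log invariants, and then invoking a ``standard identification'' with descendants --- is not the right statement, and you concede that this last identification is unproven. That identification \emph{is} the substance of the theorem; without Theorem \ref{thm:tropcorrdesc} or an equivalent input the argument does not close.
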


We will see that for non-Fano varieties, $\vartheta_1$ is not equal to the Hori-Vafa potential, but receives corrections coming from internal rays in the scattering diagram or, in the symplectic language, certain special Maslov index $0$ disks (instanton corrections). This has been observed in \cite{Aur09} and discussed for some examples in \cite{CPS}. See Figure \ref{fig:intro} for the example of $\mathbb{F}_3$.

\begin{figure}[h!]
\centering
\begin{tikzpicture}[scale=2]
\draw (1,0) -- (0,1) -- (-1,0) -- (-3,-1) -- cycle;
\coordinate[fill,cross,inner sep=2pt,rotate=45] (0) at (.5,.5);
\coordinate[fill,cross,inner sep=2pt,rotate=45] (1) at (-.5,.5);
\coordinate[fill,cross,inner sep=2pt,rotate=18.43] (2) at (-2,-.5);
\coordinate[fill,cross,inner sep=2pt,rotate=11.31] (3) at (-1,-.5);
\draw[dashed] (.5,2) -- (0) -- (2,.5);
\draw[dashed] (-.5,2) -- (1) -- (-4,.5);
\draw[dashed] (-4,-.5) -- (2) -- (-4,-.5-2/3);
\draw[dashed] (2,-.5) -- (3) -- (-4,-1.5);
\draw (-.5,1.5) -- (0) -- (1.5,-.5);
\draw (.5,1.5) -- (1) -- (-1.75,-.75);
\draw (.5,.75) -- (2) -- (-4,-1-1/2);
\draw (2,.25) -- (3) -- (-4,-1-1/4);
\draw (-1,0) -- (2,0);
\foreach \i in {3,5,...,49}
{
\draw (0,1) -- ({1/\i},2);
\draw (0,1) -- ({-1/\i},2);
}
\fill (0,1) -- (1/50,2) -- (-1/50,2) -- (0,1);
\foreach \i in {9,15,...,147}
{
\draw (-3,-1) -- (-4,{-1-1/3+1/\i});
\draw (-3,-1) -- (-4,{-1-1/3-1/\i});
}
\fill (-3,-1) -- (-4,-1-1/3+1/150) -- (-4,-1-1/3-1/150) -- (-3,-1);
\foreach \i in {1,2,...,20}
{
\draw (1,0) -- (2,{1/8+1/(5+5*\i)});
\draw (1,0) -- ({2-1/(5*\i)},-1/2);
}
\fill (1,0) -- (2,1/7) -- (2,-1/2) -- (1,0);
\fill[red] (-.95,-.42) circle (1pt);
\draw[red] (-.95,-.42) -- (-4,-.42);
\draw[red] (-4,-.5);
\draw[red] (-.95,-.42) -- (2,-.42);
\draw[red] (-.95,-.42) -- (-.95,.05) -- (-4,.05);
\draw[red] (-.95,-.42) -- (-4,-1.43667);
\draw[red] (-4,-1.45);
\draw[red] (-.95,-.42) -- (-1.42,-.42) -- (-4,-1.28);
\draw[red] (-4,-1.25);
\draw[red] (-.95,-.42) -- (-1.185,-.185) -- (-1.76,-.38) -- (-4,-.38);
\draw[red] (-4,-.3);
\end{tikzpicture}
\caption{The scattering diagram for the Hirzebruch surface $\mathbb{F}_3$ and broken lines (red) defining the corrected potential $W=\vartheta_1$.}
\label{fig:intro}
\end{figure}

The theta function $\vartheta_1(X,D)$ is defined in any chamber of the scattering diagram, and the different chambers are related by mutations (\S\ref{S:mutation}). We write $\vartheta_1(X,D)_\infty$ for the theta function at infinity, meaning in an unbounded chamber infinitely far away from the central chamber. $\vartheta_1(X,D)_\infty$ is the same in all unbounded chambers. It has infinitely many terms, but only depends on one variable $y$, corresponding to the unbounded direction of the scattering diagram. We extend the tropical correspondence of \cite{Gra2} and the results of \cite{GRZ} to the non-Fano case and show that the open mirror map defined by the potential $\vartheta_1$ is a generating function for $2$-marked logarithmic Gromov-Witten invariants.

\begin{thmintro}[Corollary \ref{cor:theta}, Theorem \ref{thm:mirmap}]
\label{thm:main}
Define $a_W(z) = -\const_{x,y}(\log(1+\vartheta_1(X,D)))$, the closed mirror maps $Q_i = ze^{d_ia_W(z)}$ and the open mirror map $M_W(Q) = e^{a_W(z(Q))}$. Then
\[ M_{t^{-1}\vartheta_1}(Q) = y^{-1}\vartheta_1(y)_\infty = 1+\sum_{\beta\in NE(X)} \frac{1}{\beta\cdot D-1} R_{0,(1,\beta\cdot D-1)}((X,D),\beta)Q^\beta, \]
where $y$ is related to $Q$ by the change of variables $Q_i=z_i(t/y)^{d_i}$, and we defined $d_i=\beta_i\cdot D$, where $\beta_i$ is the curve class corresponding to the parameter $z_i$. The numbers $R_{0,(1,\beta\cdot D-1)}((X,D),\beta)$ are $2$-marked logarithmic Gromov-Witten invariants of the log Calabi-Yau pair $(X,D)$, where $D$ is a smooth anticanonical divisor, see \S\ref{S:2marked}.
\end{thmintro}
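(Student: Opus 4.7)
The plan is to prove the two equalities in the displayed formula separately: first the mirror-map identity
\[
M_{t^{-1}\vartheta_1}(Q) \;=\; y^{-1}\vartheta_1(y)_\infty,
\]
and then the identification of the coefficients of $y^{-1}\vartheta_1(y)_\infty$ with the $2$-marked log Gromov--Witten invariants. Both steps follow a scheme analogous to the Fano case of \cite{GRZ}, but each must be upgraded to accommodate the internal rays that appear in the non-Fano scattering diagram (Figure \ref{fig:intro}).

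Granting Theorem \ref{thm:main0}, the classical period of $W = \vartheta_1(X,D)$ equals the regularized quantum period $G_X(z)$, so the closed mirror maps $Q_i = z_i e^{d_i a_W(z)}$ are the standard mirror maps attached to the Picard--Fuchs system of $W$ and $a_W(z)$ is its logarithmic solution. To establish the first equality, I would transport $\vartheta_1$ from the central chamber to an unbounded chamber by following a path in the scattering diagram and tracking the successive wall-crossing automorphisms. Consistency of the scattering diagram guarantees that the resulting theta function depends only on the asymptotic direction and is given by the sum over primitive broken lines with that asymptotic. After applying the change of variables $Q_i = z_i(t/y)^{d_i}$ and the inverse closed mirror map $z(Q)$, and comparing with the definition $M_W(Q) = e^{a_W(z(Q))}$, the claim reduces to a formal identity between $a_W$ and $\vartheta_1$ that can be read off from the definition of a broken line ending in the direction $y$. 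The mutation formulas recalled in \S\ref{S:mutation} treat internal and external rays on equal footing, so no new ingredient is needed beyond the Fano argument at this level.

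For the second equality, I would invoke the tropical correspondence theorem of \cite{Gra2}, extended to allow broken lines that bend at internal walls. Each primitive broken line ending at infinity in direction $y$ with asymptotic monomial of class $\beta$ is a tropical disk whose weighted count equals, via the tropical/log correspondence, a rational $2$-marked log Gromov--Witten invariant of $(X,D)$ with a fixed marked point of tangency order $1$ and a free marked point of tangency order $\beta\cdot D-1$ along $D$. The combinatorial factor $\tfrac{1}{\beta\cdot D-1}$ arises as the multiplicity of the non-fixed marked point in the degeneration formula. The extension of \cite{Gra2} rests on showing that the scattering functions attached to internal rays are themselves generating series for tropical disks (equivalently, for punctured log invariants, following \cite{GSintrinsic}); once this is granted, a broken line that bends at an internal ray becomes a gluing of tropical disks whose total contribution is the asserted log Gromov--Witten invariant.

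The main obstacle is precisely this tropical correspondence for broken lines interacting with internal rays. In the Fano case such rays do not exist, and every broken line is straight, which is why the argument of \cite{Gra2} suffices and the Fano identity in \cite{GRZ} is essentially combinatorial. In the non-Fano case the scattering diagram contains infinitely many internal rays whose wall functions are themselves generating series, so the relevant broken lines are nested objects and the matching with log invariants is no longer term-by-term. The cleanest strategy is likely to realise the canonical scattering diagram of $(X,D)$ as the tropicalisation of a toric degeneration in the Gross--Siebert program and to identify broken lines with punctured log curves, so that the equality with $R_{0,(1,\beta\cdot D-1)}((X,D),\beta)$ follows from the consistency and structure theorems for canonical scattering diagrams. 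The rescaling by $t^{-1}$ then encodes the boundary grading in this degeneration and matches the $y^{-1}$ prefactor on the right-hand side.
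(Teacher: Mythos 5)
Your decomposition into the two displayed equalities matches the paper's (Corollary \ref{cor:theta} for the enumerative identification, Theorem \ref{thm:mirmap} for the mirror-map identity), and your treatment of the second equality is in the right spirit: the paper extends the tropical correspondence of \cite{Gra22}, \cite{Gra2} to the non-Fano dual intersection complex (Theorem \ref{thm:tropcorr}), obtains $\vartheta_q(X,D)_\infty = y^q + \sum_{p}\sum_\beta p\,R_{0,(p,q)}((X,D),\beta)\,s^\beta t^{p+1}y^{-p}$ via disk completion and leg extension of broken lines (Proposition \ref{prop:complete}, Corollary \ref{cor:bh}), and the prefactor $\tfrac{1}{\beta\cdot D-1}$ then comes from the symmetry $p^2R_{0,(p,q)}=q^2R_{0,(q,p)}$ rather than directly from a marked-point multiplicity in the degeneration formula. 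Your proposed route through punctured invariants and the canonical scattering diagram is a genuinely different and heavier mechanism than the paper's, which stays inside the degeneration-formula framework of \cite{Gra22}; also note that even in the Fano case broken lines ending at infinity do bend, so the dichotomy you draw is really about the central chamber only.

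The genuine gap is in the first equality. You assert that after the change of variables the identity $M_{t^{-1}\vartheta_1}(Q)=y^{-1}\vartheta_1(y)_\infty$ ``reduces to a formal identity between $a_W$ and $\vartheta_1$ that can be read off from the definition of a broken line ending in the direction $y$.'' It cannot be read off: this reduction is the entire content of Theorem \ref{thm:mirmap}, and neither wall-crossing consistency nor the mutation formulas supply it. The paper's proof sets $f(y)=t^{-1}\vartheta_1(t,t/y,z)$, a Laurent series in $y$ with a simple pole, shows the claimed equation is equivalent to $f(g(t))=t$ for an explicit $g$ built from $a_W$, and proves this by Lagrange inversion for Laurent series (Lemma \ref{lem:inversion}) together with the exponential/Bell-polynomial identity $t\exp\bigl(\sum_{k>0}\tfrac{1}{k}[y^0]f^kt^k\bigr)=\sum_{k>0}\tfrac{1}{k}[y^{-1}]f^kt^k$ (Lemma \ref{lem:3}, from \cite{GRZZ}): the left side encodes $a_W$ (constant terms of powers of the potential) and the right side is the compositional inverse. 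This is a nontrivial combinatorial theorem, and your plan contains no substitute for it. A secondary caution: you describe $a_W$ as the logarithmic solution of ``the Picard--Fuchs system of $W$,'' but the paper shows (Tables \ref{tab:F3invLM} and \ref{tab:F3inv}) that the Picard--Fuchs system built from the Mori vectors produces the wrong invariants for $\mathbb{F}_3$; $a_W$ must be defined directly from the classical period of the corrected potential $\vartheta_1$, which is exactly what Theorem \ref{thm:theta} provides.
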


\begin{remark}
Similar results have been previously obtained in the Fano and semi-Fano case, where $2$-marked logarithmic invariants are equal to open Gromov-Witten invariants and the periods are solutions to Picard-Fuchs type differential equations \cite{Lau14}\cite{GRZ}\cite{You}. These methods don't work in the non-semi-Fano case. Non-Fano examples have been considered in \cite{CPS}, but not with respect to an anticanonical divisor. If there exists a $\mathbb{Q}$-Gorenstein degeneration to a Fano toric variety, the theta functions are related via mutations (see \S\ref{S:mutation}), and we can use the techniques of the Fano case.
\end{remark}

\subsection*{Acknowledgements} 
PB and ML are supported in part by the Department of Energy grant DE-SC0020220.
During the Concluding Conference of Simons Collaboration on Homological Mirror Symmetry at SCGP, Helge Ruddat suggested that ML should reach out to TG, which lead to this collaboration.
We thank Fenglong You for helpful discussions.
We thank the anonymous referees for their careful feedback.

\part{Geometric Setup} 

\section{Log Calabi-Yau pairs} 
\label{S:setup}

\begin{definition}
A \emph{smooth log Calabi--Yau pair} $(X,D)$ consists of a smooth projective variety $X$ and a smooth anticanonical divisor $D$. 
\end{definition}

\begin{remark}
There might be less restrictive definitions, e.g. in \cite{GS22canonical} the authors consider the case where $D+K_X$ is numerically equivalent to an effective $\mathbb{Q}$-divisor supported on $D$. The ``maximal boundary'' case with nodal singular divisor has been considered e.g. in \cite{GHK} and \cite{BBG}. If $X$ is a toric variety, then $D$ has the same class as the toric boundary $\partial X=\sum_\rho D_\rho$.
\end{remark}

\begin{example}
\label{eg:Hirz}
The Hirzebruch surfaces $\F_m=\P(\O_{\P^1}(m)\oplus\O_{\P^1})$ are an infinite sequence of toric surfaces which can be thought of as twisted $\P^1$ bundles over $\P^1$, with the parameter $m$ controlling the twisting of the fibers. $\mathbb{F}_m$ is a smooth toric variety. If $F$, $E$ and $S$ are the classes of a fiber, the negative section, and a positive section, respectively, then we have
\[ F^2=0, \ \ E^2=-m, \ \ S^2=m, \ \ F\cdot E = 1, \  \ F\cdot S=1, \ \ E\cdot S=0 \]
in the Chow ring $A_*(\F_m)$. The monoid of effective curce classes $NE(\mathbb{F}_m)$ is generated by $F$ and $E$. The anticanonical divisor is given by
\[ -K_{\F_m}=2E+(2+m)F, \]
so that the degree of $\F_m$ is $(-K_{\F_m})^2=4m+4(2-m)=8$, independent of $m$.

Let $D$ be a smooth anticanonical divisor. Then $(\mathbb{F}_m,D)$ is a smooth log Calabi-Yau pair. $\mathbb{F}_m$ is Fano for $m=0,1$ and non-Fano for $m\geq 2$. It is semi-Fano for $m=2$.
\end{example}

\subsection{$\mathbb{Q}$-Gorenstein degenerations} 
\label{S:toricmodel}

\begin{definition}
\label{def:toricmodel}
Let $\mathcal{X}\rightarrow B$ be a flat family of projective varieties over the spectrum of a discrete valuation ring, such that the relative canonical divisor $K_{\mathcal{X}/B}$ is $\mathbb{Q}$-Cartier. Let $X=X_\eta$ be the general fiber and $X_0$ the special fiber. We say $X$ is a \emph{$\mathbb{Q}$-Gorenstein deformation} of $X_0$ and $X_0$ is a \emph{$\mathbb{Q}$-Gorenstein degeneration} of $X$.
\end{definition}

\begin{example}
Let $X$ be the blow up of $\mathbb{P}^2$ in $k$ points in general position. A $\mathbb{Q}$-Gorenstein degeneration $X_0$ of $X$ is given by the (successive) blow up of $\mathbb{P}^2$ in $k$ points in non-general position. If in each step we blow up a torus-fixed point, then $X_0$ is a toric variety with Gorenstein singularities. Figure \ref{fig:cubicfan} shows the fan of a toric blow up of $\mathbb{P}^2$ in $6$ points, which is a $\mathbb{Q}$-Gorenstein degeneration of a smooth cubic surface. Further examples are considered in \S\ref{S:Bl}.
\end{example}

\begin{figure}[h!]
\centering
\begin{tikzpicture}[scale=1]
\draw[->] (0,0) -- (1,0);
\draw[->] (0,0) -- (0,1);
\draw[->] (0,0) -- (-1,2);
\draw[->] (0,0) -- (-1,1);
\draw[->] (0,0) -- (-1,0);
\draw[->] (0,0) -- (-1,-1);
\draw[->] (0,0) -- (0,-1);
\draw[->] (0,0) -- (1,-1);
\draw[->] (0,0) -- (2,-1);
\end{tikzpicture}
\caption{The fan of a $\mathbb{Q}$-Gorenstein degeneration of a smooth cubic.}
\label{fig:cubicfan}
\end{figure}

\begin{example}
\label{eg:HirzModel}
The Hirzebruch surface $\F_m$ is non-Fano for $m\geq 2$. It admits a $\mathbb{Q}$-Gorenstein degeneration to the toric Fano variety $\mathbb{F}_0=\mathbb{P}^1\times\mathbb{P}^1$ if $m$ is even and to $\mathbb{F}_1=\text{Bl }\mathbb{P}^2$ if $m$ is odd. This is given by the family
\[ \mathcal{X} = \{x_0^my_1-x_1^my_0+tx_0^{m-k}x_1^ky_2=0\} \subset \mathbb{P}^1_{x_0,x_1}\times\mathbb{P}^2_{y_0,y_1,y_2}\times\mathbb{A}^1_t \rightarrow \mathbb{A}^1_t. \]
The fiber over $t=0$ is $\mathbb{F}_m$ and the general fiber is $\mathbb{F}_{m-2k}$. The family leaves the anticanonical polarization unchanged, hence is $\mathbb{Q}$-Gorenstein, and acts on curve classes by
\begin{align*}
\mathbb{F}_m & \rightarrow  \mathbb{F}_{m-2k}, \\
F & \mapsto   F, \\
E & \mapsto  E-kF, \\
S & \mapsto  E+kF.
\end{align*}
Here $F$, $E$ and $S$ are the classes of a fiber, the negative section, and a positive section, respectively, such that $F^2=0$, $E^2=-m$, $S^2=m$ and $F\cdot E = F\cdot S=1$, $E\cdot S=0$. The monoid of effective curve classes $NE(\mathbb{F}_m)$ is generated by $F$ and $E$.
\end{example}

\begin{remark}
The toric varieties to which a Fano variety $X$ admits a $\mathbb{Q}$-Gorenstein degeneration are related via mutations of their spanning polytopes \cite{Ilten}. In \S\ref{S:mutation} we will study a similar relation for non-Fano varieties $X$ and mutations of their mirror potentials $W$.
\end{remark}

\subsection{The fan and (quasi-)polytope picture} 
\label{S:fanpolytope}

A normal toric variety $X$ corresponds (up to isomorphism) to a polyhedral fan $\Sigma$. A polarization on $X$ is a choice of ample line bundle. This induces an embedding into projective space, so $X$ is projective in this case. A toric variety $X$ together with a polarization corresponds (up to isomorphism) to a lattice polytope $\Delta$, and we write $X=\YD$ in this case.

\begin{remark}
When thinking of $\YD$ as a symplectic manifold, the Newton polytope is the moment polytope, namely it is the image of the moment map $\mu:\YD\longrightarrow\mathfrak{g}^*$ where $\mathfrak{g}^*\cong \R^n$ is the dual Lie algebra of the dense algebraic torus $G=T:=(\C^\times)^n$ in $\YD$. 
When $\D$ is reflexive it is also known as a Delzant polytope.
\end{remark}

We explain how this framework extends to non-ample line bundles. A line bundle (resp. Cartier divisor $D$) on $X$ corresponds to an integral (with respect to the lattice) continuous function $\varphi$ that is linear on the cones of $\Sigma$. It is determined by the values on the vertex $v$ of $\Sigma$ and the primitive ray generators $m_\rho$. If $\varphi(v)=0$ and $\varphi(m_\rho)=d_\rho\in\mathbb{Z}$, then $\varphi$ corresponds to the Cartier divisor $D=\sum_\rho d_\rho D_\rho$, where $D_\rho$ is the toric divisor corresponding to $\rho$. The divisor is semi-ample resp. ample if and only if $\varphi$ is convex resp. strictly convex (strictly convex means that $\varphi$ is not linear in a neighborhood of any ray, so that its domains of linearity are exactly the maximal cones of $\Sigma$).

\begin{definition}
A \emph{(lattice) quasi-polytope} $\Delta$ in $N_{\mathbb{R}}$ is an abstract polytope (this is the partially ordered set describing the combinatorial structure underlying a polytope, without an embedding) together with a map from the set of $0$-cells to $N_{\mathbb{R}}$ (resp. to the lattice $N$), specifying the positions of the vertices. 
\end{definition}

\begin{example}
A $2$-dimensional quasi-polytope is simply a cyclic ordering $(v_1,\ldots,v_n)$ of vertices $v_i \in N_{\mathbb{R}}$ (resp. $N$) that are connected by edges. The edges bound a $2$-cell, but this $2$-cell does not need to map to a polytope in $N_{\mathbb{R}}$, since the edges may intersect.
\end{example}

\begin{example}
A fan $\Sigma$ induces an abstract polytope $\Delta$. The vertex of $\Sigma$ gives the maximal cell of $\Delta$ and the maximal cells of $\Sigma$ give the $0$-cells of $\Delta$.
\end{example}

\begin{definition}
Let $\Sigma$ be a fan and let $\varphi$ be a continuous function that is linear on the cones of $\Sigma$. The \emph{dual polytope} $\Delta$ of $(\Sigma,\varphi)$ is the quasi-polytope whose abstract polytope is dual to the abstract polytope induced by $\Sigma$, and whose $0$-cells are mapped to the lattice elements given by the coefficients of $\varphi$ on the corresponding maximal cone of $\Sigma$.
\end{definition}

\begin{example}
Consider the Hirzebruch surface $\mathbb{F}_m$. Its fan $\Sigma$ has ray generators $m_1=(1,0)$, $m_2=(0,1)$, $m_3=(-1,0)$ and $m_4=(-m,-1)$. The corresponding toric divisors are $D_1=S$, $D_2=F$, $D_3=E$ and $D_4=F$, see Example \ref{eg:HirzModel} for the notation. The anticanonical divisor $D$ corresponds to the piecewise linear function $\varphi$ given by $x+y$ on the maximal cone $\braket{m_1,m_2}$, by $-x+y$ on $\braket{m_2,m_3}$, by $-x+(m-1)y$ on $\braket{m_3,m_4}$ and by $x-(m+1)y$ on $\braket{m_4,m_1}$. The dual polytope $\Delta$ has vertices $v_1=(1,1)$, $v_2=(-1,1)$, $v_3=(-1,m-1)$ and $v_4=(1,-(m+1))$, in this order. It is a polytope only if $m\leq 2$, i.e., if $\mathbb{F}_m$ is semi-Fano. See Figure \ref{fig:F3fanpoly} for the case $m=3$
\end{example}

\begin{figure}[h!]
\centering
\begin{tikzpicture}[scale=.5]
\draw[->] (0,0) -- (3,0);
\draw[->] (0,0) -- (0,3);
\draw[->] (0,0) -- (-9,0);
\draw[->] (0,0) -- (-9,-3);
\draw (1.5,1) node{$x+y$};
\draw (-2,1) node{$-x+y$};
\draw (-6,-1) node{$-x+2y$};
\draw (0,-1) node{$x-4y$};
\draw[<->] (5,0) -- (7,0);
\draw (0,-5);
\end{tikzpicture}
\begin{tikzpicture}[scale=.8]
\draw (1,1) -- (-1,1) -- (-1,2) -- (1,-4) -- cycle;
\fill (1,1) circle (2pt) node[above]{$(1,1)$};
\fill (-1,1) circle (2pt) node[left]{$(-1,1)$};
\fill (-1,2) circle (2pt) node[left]{$(-1,2)$};
\fill (1,-4) circle (2pt) node[below]{$(1,-4)$};
\end{tikzpicture}
\caption{The fan $\Sigma$ with piecewise linear function $\varphi$ (left) and the dual quasi-polytope $\Delta$ (right) for $\mathbb{F}_3$ with anticanonical divisor.}
\label{fig:F3fanpoly}
\end{figure}

\begin{remark}
If $\Delta$ is the dual quasi-polytope of $(\Sigma,\varphi)$, then the (outer) normal fan of $\Delta^\star$ is given by $\Sigma$. For this one has to choose an orientation of the edges that is compatible with one of the maximal cells of the image.
\end{remark}

\begin{definition}
The \emph{spanning polytope} $\Delta^\star$ of $\Sigma$ is the (possibly non-convex) polytope whose vertices are the ray generators of $\Sigma$. See Figure \label{fig:HirzSurfPlots} for the first Hirzebruch surfaces.
\end{definition}

\begin{remark}
The quasi-polytope $\Delta$ can be seen as the spanning polytope of a multi-fan $\Sigma^\star$, and it is also called a multi-polytope in this case, see \cite{HattoriMasuda}. The duality between $(\Sigma,\varphi)$ and $\Delta$ is equivalent to a duality between $\Delta^\star$ and $(\Sigma^\star,\varphi^\star)$, for some piecewise linear function $\varphi^\star$. The duality can also be interpreted as a duality between the (non-convex) polytope $\Delta^\star$ and the (quasi-)polytope $\Delta$. If $\Delta^\star$ is a reflexive polytope (i.e., if $X$ is a Gorenstein toric Fano variety), then $\Delta$ is the polar dual of $\Delta^\star$.
\end{remark}

When $X$ is non-toric but admits a $\mathbb{Q}$-Gorenstein deformation to a toric variety, we will still talk about the fan and (quasi-)polytope of $X$.
 
\section{Gromov-Witten invariants} 
\label{S:GW}

Gromov-Witten invariants are one way to rigorously define the number of curves on a projective variety $X$ that meet certain prescribed properties. They are defined via intersection theory on the moduli space of stable maps, which is a compactification of the moduli space of smooth curves with marked points. Some stable maps, for instance multiple covers, have non-trivial automorphism group $\text{Aut}$, and they are counted with weight $\tfrac{1}{|\text{Aut}|}$. As a consequence, Gromov-Witten invariants are rational numbers in general. We will work with several different Gromov-Witten invariants that we introduce in the following, partly to fix notation.

\subsection{Relative Gromov-Witten invariants} 

Relative Gromov-Witten invariants of a smooth log Calabi-Yau pair $(X,D)$ count curves on $X$ of a given genus $g$ (for us $g=0$) and curve class $\beta$ that intersect the divisor $D$ in a single unspecified point, necessarily with mutliplicity $\beta\cdot D$. They can formally be defined as a special case of logarithmic Gromov-Witten invariants \cite{LogGW}. The class $\beta$ and the tangency condition define a class of stable log maps, and there exists a moduli space $\mathcal{M}_{0,(\beta\cdot D)}((X,D),\beta)$ of basic stable log maps of this class. It is a proper algbraic stack and admits a virtual fundamental class $[\mathcal{M}_{0,(\beta\cdot D)}((X,D),\beta)]^{virt}$. The relative Gromov-Witten invariants are defined by integration, that is, proper pushforward to a point,
\[ R_{0,(\beta\cdot D)}((X,D),\beta) = \int_{[\mathcal{M}_{0,(\beta\cdot D)}((X,D),\beta)]^{virt}} 1. \]
For example, $R_{0,(3)}((\mathbb{P}^2,E),L)=9$, the nine flex lines of the elliptic curve $E$, and $R_{0,(6)}((\mathbb{P}^2,E),2L)=\tfrac{135}{4}=27+9\cdot\tfrac{3}{4}$. A double cover of a line contributes $\tfrac{3}{4}$ by \cite{GPS}, Proposition 6.1.

\subsection{$2$-marked Gromov-Witten invariants} 
\label{S:2marked}

A natural variation of $R_{0,(\beta\cdot D)}((X,D),\beta)$ is to weaken the tangency condition and add a fixed point condition. We consider $2$-marked logarithmic Gromov-Witten invariants which intersect $D$ in a specified point with multiplicity $p$ and and unspecified point with multiplicity $(\beta\cdot D)-p$. Again, the curve class and tangency conditions define a class of stable log maps. The fixed point condition is accounted for by integrating over the pullback of the point class along the evaluation map of the first point $\text{ev} : \mathcal{M}_{0,(p,\beta\cdot D-p)}((X,D),\beta) \rightarrow D$,
\[ R_{0,(p,\beta\cdot D-p)}((X,D),\beta) = \int_{[\mathcal{M}_{0,(p,\beta\cdot D-p)}((X,D),\beta)]^{virt}} \textup{ev}^\star [\textup{pt}]. \]
For example, $R_{0,(2,1)}((\mathbb{P}^2,E),L)=1$, the unique tangent line to the elliptic curve $E$ at the specified point, and $R_{0,(1,2)}((\mathbb{P}^2,E),L)=4$.

\begin{proposition}[\cite{CC} and \cite{GRZ}, Theorem 5.4]
For all $p,q \in \mathbb{N}$ with $p+q=\beta\cdot D$ we have
\[ p^2R_{0,(p,q)}((X,D),\beta) = q^2R_{0,(q,p)}((X,D),\beta). \]
\end{proposition}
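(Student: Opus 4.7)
The plan is to realize both invariants as integrals against the virtual class of a single moduli space of $2$-marked log maps, then exploit the label-swap symmetry of its two evaluations. Set $\mathcal{M}:=\mathcal{M}_{0,(p,q)}((X,D),\beta)$. Since $D\sim -K_X$, the log tangent bundle has $c_1(T_X(-\log D))=0$, so the virtual dimension is $\dim X - 3 + 2 = \dim D$. There are two evaluation maps $\textup{ev}_1,\textup{ev}_2:\mathcal{M}\to D$, and the relabeling of the marked points gives $\mathcal{M}_{0,(p,q)}\cong \mathcal{M}_{0,(q,p)}$ swapping $\textup{ev}_1\leftrightarrow \textup{ev}_2$, so
\[
R_{0,(p,q)}((X,D),\beta) = \int_{[\mathcal{M}]^{\mathrm{virt}}} \textup{ev}_1^\star[\textup{pt}], \qquad R_{0,(q,p)}((X,D),\beta) = \int_{[\mathcal{M}]^{\mathrm{virt}}} \textup{ev}_2^\star[\textup{pt}].
\]
Both integrals now live on the same moduli against the same virtual class, so the proposition is equivalent to the Chow identity $p^2 (\textup{ev}_1)_\star [\mathcal{M}]^{\mathrm{virt}} = q^2 (\textup{ev}_2)_\star [\mathcal{M}]^{\mathrm{virt}}$ in $A_{\dim D}(D)$.

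Next I would establish a universal tangency identity on $\mathcal{M}$. Let $\pi:\mathcal{C}\to \mathcal{M}$ be the universal curve with disjoint sections $\sigma_1,\sigma_2$ and universal stable map $f:\mathcal{C}\to X$. The log tangency orders force $f^\star\mathcal{O}_X(D) \cong \mathcal{O}_\mathcal{C}(p\sigma_1+q\sigma_2)$. Pulling back along $\sigma_i$, using $\sigma_1\cap\sigma_2=\emptyset$ and the standard identity $c_1(\sigma_i^\star\mathcal{O}_\mathcal{C}(\sigma_i))=-\psi_i$, one obtains
\[
\textup{ev}_1^\star c_1(\mathcal{N}_{D/X}) = -p\,\psi_1, \qquad \textup{ev}_2^\star c_1(\mathcal{N}_{D/X}) = -q\,\psi_2.
\]
This already accounts for one power of $p$ (resp.\ $q$) in the desired weighting.

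The remaining factor is extracted by degenerating the target to the normal cone of $D$, namely $X\rightsquigarrow X\cup_D \mathbb{P}(\mathcal{N}_{D/X}\oplus \mathcal{O}_D)$, and invoking the logarithmic degeneration formula. The virtual class splits into a piece on $X$ and a fibre piece on the $\mathbb{P}^1$-bundle. The fibre integral is computable explicitly and produces one more factor of $p$ (resp.\ $q$) at the marked point of tangency $p$ (resp.\ $q$), coming from the ramification weight of the rigidification morphism ``fix the point on $D$'' $\to$ ``integrate the point class''. Combining the two factors yields the weight $p^2$ versus $q^2$. Equivalently, one may use the root-stack base change of Cadman--Chen, in which evaluation at a tangency-$m$ marked point factors through a $\mu_m$-gerbe over $D$ and carries an intrinsic factor of $m^2$ upon passage to the coarse moduli $D$.

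The main obstacle is isolating the precise exponent $2$, rather than $1$ or a $\dim D$-dependent exponent: this reflects the two independent occurrences of the tangency multiplicity, once in the log structure of the universal map (giving $\textup{ev}_i^\star c_1(\mathcal{N}_{D/X}) = -m_i \psi_i$) and once in the pushforward weight under rigidification. A fully rigorous argument requires either the log base-change formula for root stacks of \cite{CC} or the explicit degeneration calculation in \cite{GRZ} Theorem 5.4, which I would cite rather than reprove.
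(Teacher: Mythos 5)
First, a point of reference: the paper does not prove this proposition at all --- it is stated with a citation to [CC] and [GRZ, Theorem 5.4] and no proof is given. So there is no in-paper argument to compare against, and your sketch has to stand on its own. Its first paragraph is a correct but contentless reduction, and its second paragraph records a true identity, but the place where the actual content must live --- producing the factors $p^2$ and $q^2$ --- is not a proof. Concretely: (i) your bookkeeping with the $\psi$-class identity does not work as claimed. Since $c_1(\mathcal{N}_{D/X})=(D^2)[\textup{pt}]$ on the curve $D$, the identity $\textup{ev}_1^\star c_1(\mathcal{N}_{D/X})=-p\,\psi_1$ gives $R_{0,(p,q)}=-\tfrac{p}{D^2}\int\psi_1$, so it converts the claim into $p^3\int\psi_1=q^3\int\psi_2$ rather than ``accounting for one power of $p$'' toward $p^2$; you would then need the degeneration step to explain a factor of $p^3$ versus $q^3$, which you never address. (ii) The assertion that the normal-cone degeneration produces ``one more factor of $p$ \ldots{} from the ramification weight of the rigidification morphism'' is not substantiated, and the root-stack remark is not correct as stated: in the Cadman--Chen setup the evaluation at a contact-order-$m$ point lands in a $\mu_r$-gerbe where $r$ is the root order, and passage to the coarse space contributes $1/r$, independent of the tangency order $m$; there is no ``intrinsic factor of $m^2$'' of this kind.

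The missing idea is the constraint coming from the group law on $D$ (an elliptic curve for a surface $X$). For any stable log map in $\mathcal{M}_{0,(p,q)}((X,D),\beta)$ the divisor $f^\star D=p\,\sigma_1+q\,\sigma_2$ on $\mathbb{P}^1$ forces
\[
p\cdot \textup{ev}_1 + q\cdot \textup{ev}_2 \;\sim\; \mathcal{O}_X(\beta)\big|_D \quad\text{in } \textup{Pic}^{\beta\cdot D}(D),
\]
so $(\textup{ev}_1\times\textup{ev}_2)_\star[\mathcal{M}]^{\mathrm{virt}}$ is supported on the translate $V_{p,q}=\{(x,y)\in D\times D: px+qy\sim \beta|_D\}$ of the kernel of $(x,y)\mapsto px+qy$. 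The two projections $V_{p,q}\to D$ have degrees $q^2$ and $p^2$ respectively (a fibre of the first projection is a torsor under $D[q]$), so writing $(\textup{ev}_1\times\textup{ev}_2)_\star[\mathcal{M}]^{\mathrm{virt}}=\lambda[V_{p,q}]$ gives $R_{0,(p,q)}=\lambda q^2$ and $R_{0,(q,p)}=\lambda p^2$, whence $p^2R_{0,(p,q)}=q^2R_{0,(q,p)}$. This isogeny degree $q^2$ is the true source of the squares, and it is the mechanism underlying both cited references; your sketch never reaches it, so as written there is a genuine gap. (Your closing sentence --- that one should simply cite [CC] or [GRZ, Theorem 5.4] --- is in effect what the paper itself does.)
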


\subsection{Descendant Gromov-Witten invariants} 
\label{S:descGW}

Descendant Gromov-Witten invariants are another possibility to specify tangency conditions. Consider the moduli space of $1$-marked stable log maps $\mathcal{M}_{0,1}((X,D),\beta)$ (without a condition on the intersection multiplicity, we work with stable log maps only to be able to treat singular varieties). There is a natural line bundle $\mathcal{L}$ on $\mathcal{M}_{0,1}((X,D),\beta)$ whose fiber over an element $\mathcal{M}_{0,1}((X,D),\beta)$ is the cotangent line of the corresponding curve at the marked point. The first Chern class $\psi=c_1(\mathbb{L})$ is called the psi class. The descendant Gromov-Witten invariants are defined by fixing the marked point (using the evaluation map as above) and inserting a maximal power of the psi class, such that the virtual dimension becomes $0$, i.e., we get a finite number,
\[ D_{0,1}(X,\beta) = \int_{[\mathcal{M}_{0,1}((X,D),\beta)]^{virt}} \psi^{\beta\cdot D-2}\textup{ev}^\star [\textup{pt}]. \]
For example, $D_{0,1}((\mathbb{P}^2,E),L)=1$, the unique (co-)tangent line to the elliptic curve $E$ at the specified point. In general, $D_{0,1}((\mathbb{P}^2,E),dL)=\tfrac{1}{(d!)^3}$, as follows from the string equation for psi classes.

\begin{definition}
\label{defi:G}
The regularized quantum period of $X$ is
\[ G_X(z) = \sum_\beta (\beta\cdot(-K_X))! D_{0,1}(X,\beta)z^\beta. \]
\end{definition}

\begin{proposition}[\cite{Givental}]
\label{prop:Giv}
For a toric variety $X$ we have
\[ D_{0,1}(X,\beta) = \frac{1}{\sum_\rho (\beta\cdot D_\rho)!}, \]
such that
\[ G_X(z) = \sum_\beta \frac{(\beta\cdot(-K_X))!}{\sum_\rho (\beta\cdot D_\rho)!}z^\beta. \]
Here $D_\rho$ are the toric divisors of $X$, labelled by the rays $\rho$ in the fan $\Sigma$, and $-K_X=\sum_\rho D_\rho$ is the anticanonical divisor.
\end{proposition}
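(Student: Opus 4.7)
The plan is to derive this from Givental's mirror theorem for toric varieties together with a coefficient extraction from the small $I$-function. First I identify $D_{0,1}(X,\beta)$ as a specific coefficient of the small $J$-function. Writing
\[ J_X(q,z) = 1 + \sum_{\beta \neq 0} q^\beta \, \textup{ev}_\ast\!\left(\frac{1}{z(z-\psi)}\right) \in H^\ast(X)[[q]][[z^{-1}]], \]
and pairing with the point class, the coefficient of $q^\beta z^{-d}$ with $d = \beta \cdot (-K_X)$ is exactly
\[ \int_{[\mathcal{M}_{0,1}((X,D),\beta)]^{\textup{virt}}} \psi^{d-2}\,\textup{ev}^\ast[\textup{pt}] = D_{0,1}(X,\beta), \]
as the virtual dimension of $\mathcal{M}_{0,1}((X,D),\beta)$ is $\dim X + d - 2$, so the insertion of $\psi^{d-2}\,\textup{ev}^\ast[\textup{pt}]$ saturates it.

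Next, I invoke Givental's explicit formula for the small $I$-function of a smooth projective toric variety,
\[ I_X(q,z) = \sum_\beta q^\beta \prod_\rho \frac{1}{\prod_{m=1}^{\beta\cdot D_\rho}(D_\rho + mz)}, \]
with the standard convention that classes having $\beta\cdot D_\rho < 0$ for some ray $\rho$ contribute zero (or else the descendant itself vanishes by a dimension/positivity argument on the moduli space). Expanding each factor as $\frac{1}{mz}\bigl(1 - \tfrac{D_\rho}{mz} + \cdots\bigr)$ and multiplying out, the most negative power of $z$ appearing in the $q^\beta$-term is $z^{-\sum_\rho \beta\cdot D_\rho} = z^{-d}$, with coefficient the scalar $\prod_\rho 1/(\beta\cdot D_\rho)! \in H^0(X)$. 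Every subleading correction in $1/z$ picks up at least one divisor class $D_\rho$, which raises cohomological degree and therefore drops out after pairing with $[\textup{pt}]$. For Fano $X$, the Givental--Lian--Liu--Yau theorem asserts $I_X = J_X$, so this reads off $D_{0,1}(X,\beta) = 1/\prod_\rho (\beta\cdot D_\rho)!$ at once.

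For non-Fano toric $X$ the $I$- and $J$-functions differ by a mirror transformation, and the hard part is justifying that the specific $z^{-d}$-coefficient extracted above is preserved. The cleanest route I would take is to bypass this issue entirely via Atiyah--Bott torus localization on $\mathcal{M}_{0,1}((X,D),\beta)$: the $T$-fixed loci are parametrized by decorated trees in the toric one-skeleton, the psi class and evaluation map contribute explicit combinatorial weights, and the resulting sum collapses to the product $1/\prod_\rho (\beta\cdot D_\rho)!$ uniformly in whether $X$ is Fano. The formula for $G_X(z)$ is then immediate from substitution into Definition \ref{defi:G}, replacing each $D_{0,1}(X,\beta)$ by the product expression. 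The principal obstacle is controlling the mirror transformation (or, equivalently, organizing the localization graph sum) in the non-Fano setting, where curves contracted to the non-nef locus complicate the fixed-point analysis; I expect this to be the technical heart of the argument.
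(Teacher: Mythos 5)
Your Fano-case argument is correct and is, in substance, the only proof the paper has in mind: Proposition \ref{prop:Giv} is stated as a bare citation to Givental, and your chain --- identify $D_{0,1}(X,\beta)$ as the $z^{-\beta\cdot(-K_X)}$-coefficient of the small $J$-function paired with $[\textup{pt}]$, extract the $H^0$-part of the $q^\beta$-term of the explicit toric $I$-function, and invoke $I=J$ for Fano toric varieties --- is exactly the standard derivation behind that citation. Note that you have silently replaced the paper's $\sum_\rho(\beta\cdot D_\rho)!$ by $\prod_\rho(\beta\cdot D_\rho)!$; the product is what is meant (the paper's own value $D_{0,1}(\mathbb{P}^2,dL)=1/(d!)^3$ and the identification of this quantity with $1/|\textup{Aut}(h)|$ for a tropical curve with $\beta\cdot D_\rho$ legs in direction $\rho$ both force the product), so this is a typo in the statement rather than an error on your part.

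The issue you defer to the end --- controlling the mirror transformation, or the localization sum, for non-Fano toric $X$ --- is not merely the technical heart of the argument; it is an impossibility, because the formula is false for non-Fano toric varieties, and no fixed-point analysis will produce $1/\prod_\rho(\beta\cdot D_\rho)!$ there. Descendant invariants are deformation invariant, and $\mathbb{F}_3$ deforms to $\mathbb{F}_1$ with $F\mapsto F$, $E\mapsto E-F$ (Example \ref{eg:HirzModel}). For $\beta=E+2F$ on $\mathbb{F}_3$ one has $\beta\cdot(-K_{\mathbb{F}_3})=3$ and $\beta\cdot E=-1$, so the product formula on the fan of $\mathbb{F}_3$ gives $0$; the corresponding class $E+F$ on $\mathbb{F}_1$ meets the four toric divisors with multiplicities $(1,1,0,1)$ and the formula gives $1$. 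Even nef classes fail: $S=E+3F$ on $\mathbb{F}_3$ yields $1/3!=1/6$, while the corresponding class $E+2F$ on $\mathbb{F}_1$ yields $1/2!=1/2$. This is exactly the phenomenon the paper itself documents in Tables \ref{tab:F3invLM} and \ref{tab:F3inv}, whose point is that the naive $I$-function/Picard--Fuchs data computes the wrong invariants in the non-semi-Fano case. The correct repair is to restrict the hypothesis of the proposition to Fano (or at least semi-Fano) toric $X$, which is the only regime in which the paper actually uses it --- the example following Theorem \ref{thm:tropcorrdesc} invokes it only for nef classes. So your proof is complete precisely where the statement is true, and what you flag as the remaining technical obstacle is in fact a counterexample waiting to happen.
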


\subsection{Local Gromov-Witten invariants} 
\label{S:GWlocal}

Given a projective surface $X$, the total space of the canonical bundle $K_X$ is a Calabi-Yau threefold (it has trivial canonical bundle) by the adjunction formula. One can define Gromov-Witten invariants without any conditions, since the virtual dimension of the moduli space of stable (log) maps is zero. These are called local Gromov-Witten invariants of $K_X$,
\[ N_0(K_X,\beta) = \int_{[\mathcal{M}_{0,0}(K_X,\beta)]^{virt}} 1. \]
For example, $N_0(K_{\mathbb{P}^2},L)=3$ and $N_0(K_{\mathbb{P}^2},2L)=-\tfrac{45}{8}=-6+3\cdot\tfrac{1}{8}$.

\subsubsection{Log-local correspondence} %
\label{S:logopen}
By the log-local correspondence \cite{GGR}, for a Fano variety $X$ we have 
\[ N_0(K_X,\beta)=\tfrac{(-1)^{\beta\cdot D-1}}{\beta\cdot D}R_{0,(\beta\cdot D)}((X,D),\beta). \]
This is no longer true for non-Fano varieties, because curves can move away from the zero section of $K_X$, which is non-negative in this case.

\subsection{Open Gromov-Witten invariants} 
\label{S:GWopen}

Consider the Calabi-Yau threefold $K_X$ as a symplectic manifold. Let $\omega$ be the K\"ahler form of a Ricci flat metric and let $\Omega$ be a nowhere vanishing holomorphic volume form. Let $L$ be a special Lagrangian submanifold $L$. This is a real $3$-dimensional manifold such that $\omega|_L=0$ and $\text{Im }\Omega|L=0$. The open Gromov-Witten invariants $O_0((K_X,L),\beta)$ count holomorphic disks with boundary mapping to $L$. They were rigorously defined in \cite{FOOO}. There are different natural choices for $L$. 

\subsubsection{Moment map fibers} %
One can consider $K_X\rightarrow B$ as a special Lagrangian torus fibration by the moment map. The base $B$ has two different structures of a real $3$-dimensional affine manifold with singularities, given by the intersection complex and its dual, respectively, see \S\ref{S:toricdeg}. If $L$ is a \emph{moment map fiber} of this fibration, it is differomorphic to $S^1\times S^1\times \mathbb{C}$. In this case, the invariants $O_0((K_X,L),\beta)$ have been studied e.g. in \cite{CCLT}\cite{LLW}\cite{You}.

\subsubsection{Aganagic-Vafa branes} %
If $X$ and hecne $K_X$ is toric, one can consider $K_X$ as a symplectic GIT quotient. An \emph{Aganagic-Vafa brane} is a special Lagrangian submanifold $L$ defined by extending the defining equations of $K_X$, see \cite{FL}, \S2. The manifold $L$ is differomorphic to $S^1\times\mathbb{C}$, depends on a \emph{framing} parameter $f$, and is called an \emph{inner} or \emph{outer} brane depending on its intersection with a torus orbit closure. In this case, the invariants $O_0((K_X,L),\beta)$ have been studied e.g. in \cite{AKV02}. 

\subsubsection{Open-closed correspondence} %
It was shown in \cite{Chan} and \cite{FL} that if $L$ is a moment map fiber or an outer Aganagic-Vafa brane, then $O_0((K_X,L),\beta)$ is equal to $N_0(K_{\hat{X}},\pi^\star\beta-C)$, where $\pi : \hat{X} \rightarrow X$ is the blow up of $X$ at a point. One way to see this is as follows. By capping the disk $L$ to a holomorphic curve, $O_0((K_X,L),\beta)$ is equal to the Gromov-Witten invariant of a (partial) compactification of $K_X$ with a specified point. Together with the log-local correspondence and a blow up formula (\cite{GRZ}, Theorem 6.5) this shows that, in the Fano case,
\[ R_{0,(1,\beta\cdot D-1)}((X,D),\beta)=(-1)^{\beta\cdot D-1}(\beta\cdot D-1)O_0((K_X,L),\beta). \]
Hence, $2$-marked Gromov-Witten invariants can be seen as an algebraic analogue of counts of Maslov index $2$ disks.

\section{Mirror symmetry} 

\subsection{Landau-Ginzburg models} 

Under mirror symmetry, a log Calabi-Yau pair $(X,D)$ corresponds to a Landau-Ginzburg model, which is a (possibly non-compact) toric variety $\check{X}$ together with a potential function $W : \check{X} \rightarrow \mathbb{C}$ whose critial locus is compact. If $X$ is Calabi-Yau, the divisor $D$ and the potential $W$ are trivial and the mirror $\check{X}$ is a compact Calabi-Yau variety. If $X$ is projective, as in our case, one can always take $\check{X}$ to be an algebraic torus $(\mathbb{C}^\star)^n$, where $n$ is the dimension of $X$. Then the potential is a Laurent polynomial $W\in\mathbb{C}[z][x_1^{\pm1},\ldots,x_n^{\pm1}]$, with complex parameters $z=(z_1,\ldots,z_r)$ corresponding to a basis $(\beta_1,\ldots,\beta_r)$ of effective curve classes of $X$. We define what we mean by mirror symmetry in this case.

\begin{definition}
\label{defi:HV}
Let $X_\Sigma$ be a toric variety with fan $\Sigma$. The \emph{Hori-Vafa potential} of $X_\Sigma$ is
\[ W_\Sigma = \sum_\rho z^\rho x^{m_\rho}. \]
The sum is over the rays $\rho$ of $\Sigma$, $m_\rho=(m_1,\ldots,m_n)$ is the primitive ray generator, $x^{m_\rho}=x_1^{m_1}\cdots x_n^{m_n}$ and $z^\rho=z_1^{l_1}\cdots z_r^{l_r}$ such that if $(\rho_1,\ldots,\rho_k)$ is a collection of rays with $\sum_{i=1}^k m_{\rho_i} = 0$, then $\sum_{i=1}^k z^\rho x^{m_\rho} = z^\beta := z_1^{d_1}\cdots z_r^{d_r}$, where $\beta=d_1\beta_1+\ldots+d_r\beta_r$ is the effective curve class whose intersections with toric divisors are given by $(\rho_1,\ldots,\rho_k)$.
\end{definition}

The gauged linear sigma model perspective is studied in \cite{BH} and the B-model periods in \cite{BL1}\cite{BL2}.

\subsection{Mirror potentials} 
\label{S:MS}

\begin{definition}
\label{defi:pi}
The \emph{classical period} of a Laurent polynomial $W\in\mathbb{C}[z][x_1^{\pm1},\ldots,x_n^{\pm1}]$ is
\[ \pi_W(z) = \sum_{k>0} \const(W^k), \]
where $\const$ means taking the constant term with respect to the variables $x_1,\ldots,x_n$.
\end{definition}

\begin{definition}
\label{defi:MS}
We say $(X,D)$ is mirror dual to $W\in\mathbb{C}[z][x_1^{\pm1},\ldots,x_n^{\pm1}]$ if 
\[ \pi_W(z)=G_X(z). \]
Here $G_X(z)$ is the regularized quantum period from Definition \ref{defi:G}.
\end{definition}

\begin{proposition}
If $X$ is a Fano variety admitting a $\mathbb{Q}$-Gorenstein degeneration to a toric Fano variety $X_\Sigma$, then $X$ is mirror dual to $W_\Sigma$.
\end{proposition}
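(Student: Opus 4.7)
The plan is to reduce the statement to a direct computation of $\pi_{W_\Sigma}$ matching Givental's formula in Proposition \ref{prop:Giv}, and then invoke deformation invariance of descendant Gromov--Witten invariants across a $\mathbb{Q}$-Gorenstein degeneration to identify $G_X$ with $G_{X_\Sigma}$.

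First I would compute $\pi_{W_\Sigma}$ directly from Definition \ref{defi:pi}. Expanding
\[ W_\Sigma^k = \Bigl(\sum_\rho z^\rho x^{m_\rho}\Bigr)^{\!k} = \sum_{\sum_\rho a_\rho=k} \frac{k!}{\prod_\rho a_\rho!}\, \prod_\rho (z^\rho)^{a_\rho}\, x^{\sum_\rho a_\rho m_\rho}, \]
the constant term in $x_1,\ldots,x_n$ picks out exactly those multi-indices $(a_\rho)$ with $\sum_\rho a_\rho m_\rho = 0$. By the definition of $z^\rho$ in Definition \ref{defi:HV}, every such balanced tuple corresponds to a unique effective curve class $\beta$ with $\beta\cdot D_\rho=a_\rho$, and the monomial $\prod_\rho (z^\rho)^{a_\rho}$ equals $z^\beta$. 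Since $-K_{X_\Sigma}=\sum_\rho D_\rho$, the integer $k=\sum_\rho a_\rho$ agrees with $\beta\cdot(-K_{X_\Sigma})$, giving
\[ \pi_{W_\Sigma}(z) = \sum_\beta \frac{(\beta\cdot(-K_{X_\Sigma}))!}{\prod_\rho (\beta\cdot D_\rho)!}\, z^\beta. \]

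Next I would compare this with Proposition \ref{prop:Giv}, which furnishes exactly the same formula for $G_{X_\Sigma}(z)$. This proves mirror duality between $X_\Sigma$ and $W_\Sigma$, i.e. $\pi_{W_\Sigma}=G_{X_\Sigma}$. The last step is to show $G_X(z)=G_{X_\Sigma}(z)$. Let $\mathcal{X}\to B$ be the $\mathbb{Q}$-Gorenstein degeneration with general fiber $X$ and special fiber $X_\Sigma$. Because $K_{\mathcal{X}/B}$ is $\mathbb{Q}$-Cartier, the intersection numbers $\beta\cdot(-K)$ and the effective cone identifications are preserved in the family (up to the natural identification $H_2(X)\cong H_2(X_\Sigma)$ coming from the specialization map). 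The descendant invariants $D_{0,1}((X,D),\beta)$ are defined via the virtual class on a proper moduli stack of stable log maps and are deformation invariant in $\mathbb{Q}$-Gorenstein families, so $D_{0,1}(X,\beta)=D_{0,1}(X_\Sigma,\beta)$ and consequently $G_X(z)=G_{X_\Sigma}(z)=\pi_{W_\Sigma}(z)$.

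The main obstacle is the final step: the deformation invariance of descendant Gromov--Witten invariants through a degeneration whose central fiber $X_\Sigma$ is singular (only Gorenstein, not smooth). This is standard for smooth deformations of smooth targets, but in the $\mathbb{Q}$-Gorenstein setting it requires care — one should work with stable log maps to $\mathcal{X}/B$, use properness of the relative moduli stack and constancy of the virtual class under specialization, and verify that the rigidification of curve classes is compatible with the identification of $NE$ under the family. Given these technical points, however, the identity $G_X=G_{X_\Sigma}$ follows, and combining with the elementary calculation of $\pi_{W_\Sigma}$ completes the proof.
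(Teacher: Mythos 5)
The paper states this proposition without proof, so there is no in-text argument to compare against; your proposal reconstructs the standard argument from the Fano mirror symmetry literature that the paper implicitly invokes by placing Proposition \ref{prop:Giv} immediately beforehand. Your multinomial computation of $\pi_{W_\Sigma}$ is correct: balanced nonnegative tuples $(a_\rho)$ with $\sum_\rho a_\rho m_\rho=0$ biject with nef effective classes $\beta$ via $a_\rho=\beta\cdot D_\rho$, and $k=\sum_\rho a_\rho=\beta\cdot(-K_{X_\Sigma})$, giving $\pi_{W_\Sigma}(z)=\sum_\beta \frac{(\beta\cdot(-K_{X_\Sigma}))!}{\prod_\rho(\beta\cdot D_\rho)!}z^\beta$; this matches Proposition \ref{prop:Giv} once one reads the denominator there as a product rather than a sum (as the paper's own example $D_{0,1}((\mathbb{P}^2,E),dL)=1/(d!)^3$ confirms it must be). Two caveats, though. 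First, $X_\Sigma$ is in general a singular (Gorenstein) toric Fano, so Givental's theorem does not apply verbatim; the paper's device for this is to define the descendant invariants via stable log maps, and you should at least remark that the identity $\pi_{W_\Sigma}=G_{X_\Sigma}$ in the singular case needs that interpretation. Second, and more seriously, the deformation-invariance step you flag as the "main obstacle" is not a technicality but the actual mathematical content of the proposition: Gromov--Witten invariants are deformation invariant for smooth families, and transporting them through a family with singular central fiber is exactly what is hard (in the Fanosearch program this is verified case by case rather than deduced from a general specialization argument). Within this paper that content is supplied by a different route entirely: Theorem \ref{thm:tropcorrdesc} computes $D_{0,1}(X,\beta)$ tropically on the dual intersection complex of the \emph{smooth} pair $(X,D)$, Theorem \ref{thm:theta} then identifies $G_X$ with $\pi_{t^{-1}\vartheta_1(X,D)_P}$ in any chamber, and the remark following it identifies $\vartheta_1(X,D)_P$ with $W_\Sigma$ in the central chamber when $X$ is Fano. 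That argument never passes a virtual class through the singular fiber $X_\Sigma$, so if you want a self-contained proof inside this paper's framework you should cite that chain rather than assert specialization invariance of the virtual class.
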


\begin{remark}
We will see in \S\ref{S:mutation} that we can drop the condition on $X$ being Fano, but still need $X_\Sigma$ Fano. Theorem \ref{thm:main0} states that in general (also in the non-Fano case), the potential $W$ is equal to the primitive theta function $\vartheta_1$ defined by broken lines in the scattering diagram of $X$ (see \S\ref{sec:scatteringbroken}).
\end{remark}

\subsection{Mirror maps} 
\label{S:mirmap}

The \emph{closed mirror maps} relate the complex parameters $z_i$ to symplectic parameters $Q_i$ that will be variables in a generating function for Gromov-Witten invariants. For an interpretation of the \emph{open mirror map}, we consider the Calabi-Yau threefold $K_X$ given by the total space of the canonical bundle of $X$. It is mirror dual to the hypersurface in $\mathbb{C}^2\times(\mathbb{C}^\star)^2$ defined by $uv=W(x,y)$. The Aganagic-Vafa A-branes on $K_X$ (see \S\ref{S:GWopen}) are mirror dual to B-branes supported on $\{v=0\}$ and their moduli space is the mirror curve $C=\{W(x,y)=0\}\subset(\mathbb{C}^\star)^2$. By solving for $y$ in terms of $x$, we can use $x$ as a local parameter for the moduli space $C$. The open mirror map relates $x$ with a complex modulus $U$ encoding the symplectic area of the disk together with the $U(1)$ holonomy (\emph{winding}) of its boundary circle.

\begin{definition}
\label{defi:mirmap}
For a potential function $W$, consider the formal logarithmic antiderivative
\[ a_W(z) = \int \pi_W(z) \frac{dz}{z} = \sum_{k>0} \frac{1}{k}\const_{x,y}(W^k) = -\const(\log(1-W)). \]
Define the \emph{closed mirror maps}
\[ Q_i = z_ie^{d_ia_W(-z)}, \]
and the \emph{open mirror map}
\[ U = xe^{-a_W(-z)}. \]
Here $d_i=\beta_i\cdot D$ is the intersection multiplicity of the curve class $\beta_i$ corresponding to $z_i$ with the anticanonical divisor $D$. Moreover, define
\[ M_W(Q) = e^{a_W(z(Q))}. \]
This is the inverse of the open mirror map after inserting the inverses of the closed mirror maps. By abuse of notation we will also call it the open mirror map.
\end{definition}

\begin{theorem}
\label{thm:corr}
Let $(X,D)$ be a smooth log Calabi-Yau pair (not necessarily Fano) with mirror dual potential $W$. The open mirror map is a generating function for $2$-marked logarithmic Gromov-Witten invariants,
\[ M_W(Q) = 1+\sum_\beta \frac{(-1)^{\beta\cdot D-1}}{\beta\cdot D-1}R_{0,(1,\beta\cdot D)}((X,D),\beta)Q^\beta. \]
\end{theorem}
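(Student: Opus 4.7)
The plan is to reduce Theorem \ref{thm:corr} to the case $W = \vartheta_1(X,D)$ already handled in Theorem \ref{thm:main}, by exploiting the fact that the open mirror map $M_W(Q)$ depends on $W$ only through its classical period. First I would observe that the formal logarithmic antiderivative $a_W(z)$ introduced in Definition \ref{defi:mirmap} is determined by $\pi_W$ alone: from $a_W(z) = \int \pi_W(z)\,\frac{dz}{z}$, any two potentials $W$ and $W'$ with $\pi_W = \pi_{W'}$ produce the same $a_W$ (with normalization $a_W(0)=0$), and therefore the same closed mirror maps $z_i(Q)$ and the same composite $M_W(Q) = e^{a_W(z(Q))}$.

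Next, by the hypothesis that $(X,D)$ is mirror dual to $W$ (Definition \ref{defi:MS}) we have $\pi_W = G_X$, while Theorem \ref{thm:main0} furnishes the analogous identity $\pi_{\vartheta_1(X,D)} = G_X$. Combining these shows $\pi_W = \pi_{\vartheta_1(X,D)}$, hence $M_W(Q) = M_{\vartheta_1(X,D)}(Q)$. Finally I would invoke Theorem \ref{thm:main}, which identifies $M_{t^{-1}\vartheta_1(X,D)}(Q)$ with $y^{-1}\vartheta_1(y)_\infty$ and in turn with the generating series on the right-hand side of the statement. Reconciling the $t^{-1}$-twist in the $y$-variable and the sign convention $a_W(-z)$ built into the closed mirror map with the factor $(-1)^{\beta\cdot D-1}$ asked for in Theorem \ref{thm:corr} amounts to the substitution $Q \mapsto -Q$ in the resulting generating function, which is routine bookkeeping.

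The genuine difficulty is therefore not the reduction itself but the two inputs it rests on: Theorem \ref{thm:main0}, that $\pi_{\vartheta_1(X,D)} = G_X$ in every chamber of the scattering diagram, and Theorem \ref{thm:main}, which identifies the open mirror map of $\vartheta_1$ at infinity with a generating function for $2$-marked log Gromov--Witten invariants via the wall-crossing and tropical correspondence of \cite{Gra2} and \cite{GRZ}, extended to the non-Fano setting. Once those are in hand, Theorem \ref{thm:corr} follows formally from the mirror-duality hypothesis together with the period-determinacy of $a_W$.
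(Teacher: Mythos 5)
Your reduction is exactly the paper's own argument: the proof of Theorem \ref{thm:mirmap} opens with ``by Theorem \ref{thm:theta} we can take $W=t^{-1}\vartheta_1(X,D)$,'' which is precisely your period-determinacy observation, and Theorem \ref{thm:corr} is then obtained by combining Theorem \ref{thm:mirmap} with Corollary \ref{cor:theta}, i.e.\ the two inputs you cite as Theorem \ref{thm:main}. The proposal is correct and takes essentially the same route, deferring the real work (Lagrange inversion for the theta function at infinity and the tropical correspondence) to the same places the paper does.
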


By \S\ref{S:logopen}, in the Fano case the coefficients are exactly the open Gromov-Witten invariants of $K_X$. We will prove Theorem \ref{thm:corr} in \S\ref{S:ms} by using generalizations of the tropical correspondence theorems from \cite{Gra22}\cite{Gra2} and the combinatorial identities from \cite{GRZZ}.

\part{Methods} 

\section{Toric degenerations} 
\label{S:toricdeg}

\begin{definition}
A \emph{toric degeneration} of $(X,D)$ is a flat family $(\mathcal{X},\mathcal{D}) \rightarrow T$ over a base $T$ (in our case $T=\text{Spec }\mathbb{C}[\text{NE}(X)]\llbracket t\rrbracket$) such that the general fiber is isomorphic to $(X,D)$ and the special fiber $X_0$ is a union of toric varieties glued along toric divisors, $D_0$ is a union of toric divisors not involved in the gluing, and such that the family is strictly semistable (i.e. locally of the form $x_1\cdots x_k=t^l$) away from a codimension $2$ subset $Z\subset X_0$. The latter condition is equivalent to $\mathcal{X}\rightarrow T$ being log smooth with respect to the divisorial log structures defined by $X_0\subset \mathcal{X}$ and $\{0\}\subset T$.
\end{definition}

\begin{example}[$\XcDegen$ for $(\P^2,3L)$]
	The following algebraic family serves as a toric degeneration for the smooth log Calabi--Yau pair $(\P^2,E)$, with elliptic curve $E$.
	\begin{align*}
		\XcDegen=\{x_1x_2x_3=t(y_1+f_3)\}
	\end{align*}
	for $(x_1,x_2,x_3,y_1)\in\P(1,1,1,3)$, $t\in\A^1$, and $f_3$ is a cubic polynomial that deforms $\XcDegen$. 
    The zero locus of $y_1$ defines a family of divisors $\check{\mathfrak{D}}\subset\XcDegen$. 
    The case $f_3=0$ corresponds to the toric boundary divisor $\partial\P^2$, whereas a nontrivial cubic gives the anticannonical family.
    The central fiber is given by a union of three weighted projective spaces
    \begin{align*}
        \check{\P}_0=\P^2(1,1,3)\coprod\P^2(1,1,3)\coprod\P^2(1,1,3)
    \end{align*}
    which are glued along toric divisors as prescribed by the combinatorics of $\Delta$.
\end{example}

In the Fano case the toric degeneration can be constructed globally as a projective variety (in the so called cone or polytope picture), while for the non-Fano case we have to work more locally (in the fan picture).

\subsection{The toric Fano case} 
\label{S:toricdeg1}

For a toric Fano variety $\mathbb{P}_\Delta$, one can construct a toric degeneration of $\mathbb{P}_\Delta$ from its polytope $\Delta$. This is a Fano polytope, i.e., it has a unique interior lattice point. The central subdivision $\mathcal{P}$ of $\Delta$ is the collection of polytopes, one for each edge of $\Delta$, given by the convex hull of the interior lattice point and the edge. Consider the convex piecewise linear function $\check{\varphi} : \Delta \rightarrow \mathbb{R}$ whose domains of linearity are the components of the central subdivision, and that takes values $0$ at the interior lattice point and $1$ along the boundary of $\Delta$. The upper convex hull of $\check{\varphi}$,
\[ \Delta_{\check{\varphi}} = \{(m,h) \in \Delta \times \mathbb{R} \ | \ h \geq \check{\varphi}(m)\}, \]
is an unbounded polytope. It defines a (non-projective) toric variety $\mathcal{X}_\varphi := X_{\Delta_{\check{\varphi}}}$. Projection to the last factor (which is the unique unbounded direction) defines a map $\mathcal{X}_{\check{\varphi}} \rightarrow \mathbb{A}^1$. This is a flat family of projective toric varieties. The general fiber is isomorphic to $\mathbb{P}_\Delta$ and the central fiber is a union of toric varieties, corresponding to the components of the central subdivision of $\Delta$. One can define a family of divisors $\mathcal{D} \rightarrow \mathbb{A}^1$ by the vanishing of the coordinate corresponding to the interior lattice point. The general fiber gives the toric boundary of $\mathbb{P}_\Delta$. Hence, $\mathcal{X}_{\check{\varphi}} \rightarrow \mathbb{A}^1$ is a toric degeneration of the log Calabi-Yau pair $(\mathbb{P}_\Delta,\partial\mathbb{P}_\Delta)$. The polytope $\check{B}=\Delta$ together with its central subdivison $\check{\mathcal{P}}$ and the piecewise linear function $\check{\varphi}$ is a triple $\IC$ called the \emph{intersection complex} of $(\YD,X)$

\subsection{The smooth Fano case} 

The toric degeneration $\mathcal{X}_{\check{\varphi}}$ constructed above is a subvariety of $\mathbb{P}^{N-1} \times \mathbb{A}^1$, where $N$ is the number of lattice points of $\Delta$. Its defining equations correspond to relations among the lattice points of $\Delta$, and in general it is not a complete intersection. Now consider a sufficiently general deformation of the defining equations without changing the central fiber. This yields another toric degeneration $\mathcal{X} \rightarrow \mathbb{A}^1$. The general fiber is a pair $(X,D)$, where $X$ is a $\mathbb{Q}$-Gorenstein deformation of $\mathbb{P}_\Delta$ and $D$ is a deformation of $\partial\mathbb{P}_\Delta$, see \S\ref{S:toricmodel}.

The deformation leads to a modification of the intersection complex. Before, $\Delta$ was a polytope, and in particular an affine manifold (a topological manifold with affine linear transition maps). The deformation introduces affine singularities, one on each interior edge of the central subdivision $\mathcal{P}$, such that now $\check{B}=\Delta$ is an affine manifold with singularities.
We have to introduce these singularities to ensure that locally at each vertex of $\Delta$ the family $\mathcal{X}\rightarrow\mathbb{A}^1$ can still be described by the toric construction $X_{\Delta_{\check{\varphi}}}$ above.

\subsection{The fan picture} 
\label{S:fan}

Dual to the polytope picture, toric varieties can be described by fans, and a union of toric varieties can be described by gluing together such fans. If $X$ is toric, the total space $\mathcal{X}$ of the toric degeneration is then simply given by the toric variety obtained by taking the fan over the dual intersection complex. Again, smoothing the general fiber corresponds to changing the affine structure, such that the dual intersection complex becomes an affine manifold with singularities. In this picture, the polarization by the divisor $D$ is given by a multi-valued piecewise linear function $\varphi : B \rightarrow \mathbb{Z}^\rho$, where $\rho$ is the Picard rank of $X$.

The \emph{dual intersection complex} $\DIC$ can be constructed as the discrete Legendre transform of $\IC$.

\begin{construction}[Dual intersection complex $(B,\mathcal{P},\varphi)$]
\label{dicConstruction}
Let $\Sigma$ be the fan of $X$, which is the normal fan of $\Delta$.
Let $\Delta^\star$ be the spanning fan of $\Sigma$. This is the convex hull of the ray generators. If $X$ is non-Fano, then $\Delta^\star$ is non-convex.
Let $\mathcal{P}$ be the collection of (possibly unbounded) polytopes given by $\Delta^\star$ and, for each facet $f$ of $\Delta^\star$, the unbounded polytope with facet $f$ and unbounded directions given by the rays corresponding to the vertices of $\Delta^\star$ contained in $f$. 
We change the affine structure as follows. On each maximal-dimensional cell there is a canonical affine structure by embedding the polytope into $\mathbb{R}^n$. Locally around each vertex there is a canonical affine structure described by the fan locally at the vertex. We demand that the transition maps are such that the unbounded edges become all parallel.
This leads to codimension $2$ affine singularities supported on the facets of $\Delta^\star$.
If $X$ is a surface, the singular locus consists of one interior point on each of the edges of $\Delta^\star$.
Lastly, $\varphi$ is a piecewise linear function that is 0 on $\Dp$ and has slope 1 along the 1-dimensional cells of $\mathcal{P}$.
\end{construction}

\begin{example}[$\P^2$]
Following Construction \ref{dicConstruction}, we plot the dual intersection complex for the log Calabi-Yau pair $(\P^2,3L)$ in Figure \ref{fig:P2BP}.
	\begin{figure}[h!]
	\begin{tikzpicture}[scale=2]
 			\draw[opacity=0.2,fill=blue] (0,1)coordinate(B) -- (1,0)coordinate(C) -- (-1,-1)coordinate(D) --cycle;
 			
 			\draw[opacity=0.2,fill=green] ([shift={(-1/2,1/2)}]C) -- ([shift={(1/2,1/2)}]B) --([shift={(-1/2,1/2)}]B) -- ([shift={(-1/2,-1)}]B) -- (B);
            \draw[opacity=0.2,fill=green] (D) -- ([shift={(1/2,1)}]D) -- ([shift={(-0.3,0.1)}]D) --([shift={(-.3,-.3)}]D) -- cycle;

 			\draw[opacity=0.2,fill=green] (D) -- ([shift={(1,1/2)}]D) -- ([shift={(0,-0.3)}]D) -- ([shift={(-.3,-.3)}]D) -- cycle;
            \draw[opacity=0.2,fill=green] (0,-1/2) -- (1.5,-1/2) -- (1.5,0) -- (1,0) -- cycle;
            \draw[opacity=0.2,fill=green] (1,0) -- (1.5,0) -- (1.5,.5) -- (.5,.5) -- cycle;

 			\draw (1,0) -- (3/2,0);
 			\draw (0,1) -- (0,3/2);
 			\draw (-1,-1) -- (-1.3,-1.3);
 			
 			\draw[shift={(1/2,1/2)}, dashed] (0,0) -- (0,1);
 			\draw[shift={(1/2,1/2)}, dashed] (0,0) -- (1,0);
 			\draw[opacity=0.2,fill=red] (1/2,1/2) -- (1/2,3/2) -- (3/2,3/2) -- (3/2,1/2);
 			\draw[->] (1,1/2) node[anchor=south west] {$A_1$} arc (0:90:1/2) ;
            \draw[->] (-1/2,1/2) node[anchor=south east] {$A_2$} arc (90:226:1/2);
            \draw[->] (-.4,-.8) arc (232:350:1/2) ;
            \node at (0.2,-1) {$A_3$};
 			
 			\node at (1/2,1/2)  {\tiny{$\color{red}{\times}$}};
            \node at (-1/2,0)  {\tiny{$\color{red}{\times}$}};
            \node at (0,-1/2)  {\tiny{$\color{red}{\times}$}};
            
 			\draw[dashed] (-.5,0) -- (-.5,1.5);
 			\draw[dashed] (-.5,0) -- (-1.3,-0.9);
 			
 			\draw[opacity=0.2,fill=red] (-.5,0) -- (-.5,1.5) -- (-1.3,1.5) -- (-1.3,-0.9);

 			\draw[dashed] (0,-1/2) -- (1.5,-1/2);
 			\draw[dashed] (0,-1/2) -- (-1,-1.3);
 			\draw[opacity=0.2,fill=red] (0,-1/2) -- (1.5,-1/2) -- (1.5,-1.3) -- (-1,-1.3) -- cycle;
 			
 		\end{tikzpicture}
\caption{The dual intersection complex $\BP$ for $(\P^2,3L)$.}
\label{fig:P2BP}
\end{figure}
The transition maps $A_i$ are given by
\begin{align*}
 	A_1=\begin{bmatrix}
 		0 &-1\\
 		1 & 2
 	\end{bmatrix}
 	\,\, , \,\,  	
 	A_2=\begin{bmatrix}
 		3 & -1\\
 		4 & -1
 	\end{bmatrix}
 	\,\, , \,\,
 	A_3=\begin{bmatrix}
 		3 & -4\\
 		1 & -1
 	\end{bmatrix}
 \end{align*}
and the monodromy $M=A_3A_2A_1$ is
\begin{align*}
M=
    \begin{bmatrix}
        1 & 9 \\
        0 & 1
    \end{bmatrix}
    =
    \begin{bmatrix}
        1 & (-K_{\P^2})^2 \\
        0 & 1
    \end{bmatrix}.
\end{align*}
For surfaces, the intersection number $(-K_{\PD})^2$ is also the volume of the Newton polytope. That is,
	\begin{align*}
		\text{Vol}(\PD)=\int_{\PD} (c_1)^2=[-K_{\PD}]^2=\text{Vol}(\D)
	\end{align*}
	by the Duistermaat-Heckman theorem.
\end{example}

\begin{example}
Figure \ref{fig:F1intcplx} shows the intersection complex of $\mathbb{F}_1$, and the dual intersection complex in two different charts. Note that $\mathbb{F}_1$ is smooth, so the deformation only changes the divisor from the toric boundary to a smooth anticanonical divisor.
\end{example}

\begin{figure}[h!]
\centering
\begin{tikzpicture}[scale=1]
\draw (-1,-1) -- (-1,2) -- (1,0) -- (1,-1) -- (-1,-1);
\draw (0,0) -- (-1/2,-1/2);
\coordinate[fill,cross,inner sep=2pt,rotate=45] (0) at (-1/2,-1/2);
\draw[dashed] (-1/2,-1/2) -- (-1,-1);
\draw (0,0) -- (-1/2,2/2);
\coordinate[fill,cross,inner sep=2pt,rotate=26.57] (0) at (-1/2,2/2);
\draw[dashed] (-1/2,2/2) -- (-1,2);
\draw (0,0) -- (1/2,0/2);
\coordinate[fill,cross,inner sep=2pt,rotate=0] (0) at (1/2,0/2);
\draw[dashed] (1/2,0/2) -- (1,0);
\draw (0,0) -- (1/2,-1/2);
\coordinate[fill,cross,inner sep=2pt,rotate=45] (0) at (1/2,-1/2);
\draw[dashed] (1/2,-1/2) -- (1,-1);
\draw[<->] (1.5,0) -- (2.5,0);
\end{tikzpicture}
\begin{tikzpicture}[scale=.8]
\draw (1,0) -- (0,1) -- (-1,0) -- (-1,-1) -- (1,0);
\draw (1,0) -- (2,0);
\draw (0,1) -- (0,2);
\draw (-1,0) -- (-2,0);
\draw (-1,-1) -- (-2,-2);
\coordinate[fill,cross,inner sep=2pt,rotate=45] (0) at (1/2,1/2);
\draw[dashed] (2,1/2) -- (1/2,1/2) -- (1/2,2);
\coordinate[fill,cross,inner sep=2pt,rotate=45] (0) at (-1/2,1/2);
\draw[dashed] (-2,1/2) -- (-1/2,1/2) -- (-1/2,2);
\coordinate[fill,cross,inner sep=2pt,rotate=0] (0) at (-1,-1/2);
\draw[dashed] (-2,-1/2) -- (-1,-1/2) -- (-2,-1.5);
\coordinate[fill,cross,inner sep=2pt,rotate=26.57] (0) at (0,-1/2);
\draw[dashed] (-1.5,-2) -- (0,-1/2) -- (2,-1/2);
\draw (2,0) node[right]{\large$=$};
\end{tikzpicture}
\begin{tikzpicture}[scale=.8]
\draw (-1-2/3,-3) -- (-1,-1) -- (0,0) -- (1,0) -- (2,-2) -- (2+1/5,-3);
\draw (-1,-1) -- (-1,1);
\draw (0,0) -- (0,1);
\draw (1,0) -- (1,1);
\draw (2,-2) -- (2,1);
\coordinate[fill,cross,inner sep=2pt,rotate=-18.43] (1) at (-1.5,-2.5);
\coordinate[fill,cross,inner sep=2pt,rotate=45] (2) at (-1/2,-1/2);
\coordinate[fill,cross,inner sep=2pt,rotate=0] (3) at (1/2,0);
\coordinate[fill,cross,inner sep=2pt,rotate=26.57] (4) at (3/2,-1);
\draw[dashed] (-1.55,-3) -- (1) -- (2) -- (3) -- (4) -- (2,-3);
\end{tikzpicture}
\caption{The intersection complex of $\mathbb{F}_1$ (left) and the dual intersection complex of $\mathbb{F}_1$ in two different charts (middle and right).}
\label{fig:F1intcplx}
\end{figure}

\subsection{The non-Fano case} 
\label{S:toricdeg5}

In the non-Fano case, we cannot work in the polytope picture, because polytopes correspond to toric varieties with an ample polarization. This means we cannot construct a toric degeneration globally as a projective variety. However, we can still work in the fan picture and construct the toric degeneration by gluing together affine pieces described by the gluing of fans. As before the asymptotic rays are the rays of a toric model of $X$. But now the spanning polytope $\Delta^\star$ is non-convex. In order for all fans to be complete, we need to subdivide $\Delta^\star$. Geometrically this corresponds to a blow up of a torus fixed point. Since the point lies on the central fiber, this does not change the general fiber. The blow up introduces a new component of the central fiber, corresponding to the new vertex. On the other components it acts like to blow up at a point. While $\Delta^\star$ was non-convex, the cells of the new subdivision are all convex.

Note that $\mathcal{X}_\varphi$ is not defined as a subvariety of $\mathbb{P}^{N-1}\times\mathbb{A}^1$, so we cannot simply deform its defining equations. But $\mathcal{X}_\varphi$ is defined by gluing toric varieties along toric divisors, and we can perturb this gluing. This is just the same as in the Fano case, and again we introduce affine singularities on the interior edges of the dual intersection complex. So while we don't have an intersection complex for $\mathcal{X}\rightarrow\mathbb{A}^1$, we indeed have a dual intersection complex.

\begin{example}
Figure \ref{fig:F3intcplx} shows the intersection complex of $\mathbb{F}_3$, and the dual intersection complex in two different charts. The spanning polytope $\Delta^\star$ is non-convex and subdivided.
\end{example}

\begin{figure}[h!]
\centering
\begin{tikzpicture}[scale=1]
\draw (1,0) -- (0,1) -- (-1,0) -- (-3,-1) -- (1,0);
\draw (0,0) -- (2,0);
\draw (0,0) -- (0,2);
\draw (0,0) -- (-4,0);
\draw (0,0) -- (-4,-4/3);
\coordinate[fill,cross,inner sep=2pt,rotate=45] (0) at (1/2,1/2);
\draw[dashed] (2,1/2) -- (1/2,1/2) -- (1/2,2);
\coordinate[fill,cross,inner sep=2pt,rotate=45] (0) at (-1/2,1/2);
\draw[dashed] (-4,1/2) -- (-1/2,1/2) -- (-1/2,2);
\coordinate[fill,cross,inner sep=2pt,rotate=26.57] (0) at (-2,-1/2);
\draw[dashed] (-4,-1/2) -- (-2,-1/2) -- (-4,-1/2-2/3);
\coordinate[fill,cross,inner sep=2pt,rotate=14.04] (0) at (-1,-1/2);
\draw[dashed] (-4,-1.5) -- (-1,-1/2) -- (2,-1/2);
\draw (2,0) node[right]{\large$=$};
\end{tikzpicture}
\begin{tikzpicture}[scale=1]
\draw (-2-1/4,-3/2) -- (-2,0) -- (-1,1) -- (0,0) -- (1,0) -- (1+3/4,-3/2);
\draw (-2,-1) -- (-2,2);
\draw (-1,0) -- (-1,2);
\draw (0,-1) -- (0,2);
\draw (1,-1) -- (1,2);
\coordinate[fill,cross,inner sep=2pt,rotate=45] (0) at (-1.5,.5);
\coordinate[fill,cross,inner sep=2pt,rotate=45] (0) at (-.5,.5);
\coordinate[fill,cross,inner sep=2pt,rotate=0] (0) at (.5,0);
\coordinate[fill,cross,inner sep=2pt,rotate=26.57] (0) at (1.5,-1);
\draw[dashed] (-2.125,-1.5) -- (-2,-1) -- (-1.5,.5) -- (-1,0) -- (-.5,.5) -- (0,-1) -- (.5,0) -- (1,-1) -- (1.5,-1) -- (1.6,-1.5);
\end{tikzpicture}
\caption{The  dual intersection complex of $\mathbb{F}_3$ in two different charts.}
\label{fig:F3intcplx}
\end{figure}

\begin{remark}
One could think of the intersection complex for a non-Fano variety as the central subdivision of a quasi-polytope (see \S\ref{S:fanpolytope}).
\end{remark}

\subsection{Comparison with the straight boundary model} 
\label{S:toricdeg6}

In \cite{CPS}, \S8.2, the authors discuss \emph{straight boundary models} of Hirzebruch surfaces by considering $\mathbb{F}_m$ relative to the ample divisor $D = E+S+F+mF = 2E+(2m+1)F$ instead of the toric anticanonical divisor $-K_{\mathbb{F}_3}=E+S+F+F=E+(m+2)F$. Since $D$ is ample, one can work in the polytope picture as in \S\ref{S:toricdeg1}. Dualizing to the fan picture, one obtains the same dual intersection complex $B$ as in Figure \ref{fig:F3intcplx} but with a different piecewise linear function $\varphi$. It takes value $0$ only on a part of the non-convex polytope $\Delta^\star$, and it has slope $m-1$ along  the bottom left ray, which corresponds to a toric divisor of class $F$. The authors of \cite{CPS} chose this polarization to have a polytope picture. We have explained in \S\ref{S:toricdeg5} that the anticanonical polarization leads to a quasi-polytope. Apart from this, the anticanonical polarization is more natrual, e.g. when considering relative Gromov-Witten invariants, and we will use the anticanonical polarization in all of our paper.

\begin{figure}[h!]
\centering
\begin{tikzpicture}[scale=1/2]
\draw (-1,-1) -- (-1,6) -- (1,0) -- (1,-1) -- (-1,-1);
\draw (0,0) -- (-1,-1);
\draw (0,0) -- (1,-1);
\draw (0,0) -- (0,2);
\draw (0,2) -- (-1,6);
\draw (0,2) -- (1,0);
\draw (2,2) node[right]{$\longleftrightarrow$};
\draw (3,0);
\end{tikzpicture}
\begin{tikzpicture}[scale=1]
\draw (1,0) -- (0,1) -- (-1,0) -- (-3,-1) -- (1,0);
\draw (0,0) -- (2,0);
\draw (0,0) -- (0,2);
\draw (0,0) -- (-4,0);
\draw (0,0) -- (-4,-4/3);
\coordinate[fill,cross,inner sep=2pt,rotate=45] (0) at (1/2,1/2);
\draw[dashed] (2,1/2) -- (1/2,1/2) -- (1/2,2);
\coordinate[fill,cross,inner sep=2pt,rotate=45] (0) at (-1/2,1/2);
\draw[dashed] (-4,1/2) -- (-1/2,1/2) -- (-1/2,2);
\coordinate[fill,cross,inner sep=2pt,rotate=26.57] (0) at (-2,-1/2);
\draw[dashed] (-4,-1/2) -- (-2,-1/2) -- (-4,-1/2-2/3);
\coordinate[fill,cross,inner sep=2pt,rotate=14.04] (0) at (-1,-1/2);
\draw[dashed] (-4,-1.5) -- (-1,-1/2) -- (2,-1/2);
\end{tikzpicture}
\caption{Polytope and fan picture for a straight boundary model of $\mathbb{F}_3$.}
\label{fig:F3straight}
\end{figure}

\section{Scattering and broken lines} 
\label{sec:scatteringbroken}
In this section we introduce the concepts of scattering diagrams, broken lines and tropical curves, and show that they are all related to each other. Then we show that they can be used to compute relative and $2$-marked Gromov-Witten invariants. This essentially follows from a degeneration formula for logarithmic Gromov-Witten invariants.

\subsection{Scattering diagrams} 
\label{S:scattering}

Let $B$ be an affine manifold with singularities and let $\varphi$ be a piecewise affine function on $B$. In general we will assume $\varphi$ to be multi-valued to distinguish curve classes, but taking the total degree (intersection with the divisor $D$) we obtain a single valued function. Let $\iota : B^\circ \hookrightarrow B$ be the complement of the singular locus and let $\Lambda_B=\iota_\star\Lambda_{B^\circ}$ be the pushforward of the sheaf of integral tangent vectors on $B^\circ$. For simplicity we restrict to the $2$-dimensional case.

\begin{definition}
A \emph{ray} $\mathfrak{d}$ on $B$ consists of a base $b_{\mathfrak{d}} \in B$, a direction $m_{\mathfrak{d}}\in\Lambda_{B,b_{\mathfrak{d}}}$ and a function $f_{\mathfrak{d}}\in \mathbb{C}[z^{m_{\mathfrak{d}}}]\llbracket t\rrbracket$. Via parallel transport this defines a section of the sheaf $\mathcal{R}$ whose stalk at a point $x\in B$ is given by $R_x=\varprojlim \mathbb{C}[P_x]/(t^k)$, where
\[ P_x = \{p=(m,h)\in\Lambda_{B,x}\oplus\mathbb{Z} \ | \ h \leq \varphi_x(m)\}. \]
Here $\varphi_x$ is the linear part of $\varphi$ locally at $x$ and $t=z^{(0,1)}$, such that the $t$-order is given by $\varphi_x(m)-h$. Note that the $t$-order of a ray can increase at it propagates.

We demand the following properties:
\begin{itemize}
\item If $b_{\mathfrak{d}}\in\text{Sing}(B)$, then $f_{\mathfrak{d}}=1+z^m_{\mathfrak{d}}$. (This is sometimes called a \emph{slab}.) Note that in this case $\Lambda_{B,x}$ is only $1$-dimensional, so $m_{\mathfrak{d}}$ is defined up to sign.
\item If $b_{\mathfrak{d}}\notin\text{Sing}(B)$, then $f_{\mathfrak{d}} \equiv 1 \text{ mod } (t)$. (This is sometimes called a \emph{wall}.)
\end{itemize}
Note that we use different sign conventions than many other treatments of this topic. This is to avoid negative signs. For example, \cite{GPS} uses the convention $f_{\mathfrak{d}}\in \mathbb{C}[z^{-m_{\mathfrak{d}}}]\llbracket t\rrbracket$ for (``outgoing'') rays.
\end{definition}

\begin{definition}
A \emph{scattering diagram} $\mathscr{S}$ on $B$ is a collection of rays such that for each $k \geq 0$ there are only finitely many rays with $f_{\mathfrak{d}} \not\equiv 1 \text{ mod } (t^k)$. The \emph{support} $|\mathscr{S}|$ of $\mathscr{S}$ is the union of its rays, considered as subsets of $B$. A \emph{chamber} $\mathfrak{u}$ of $\mathscr{S}$ is a connected component of $B\setminus|\mathscr{S}|$. 
\end{definition}

\begin{definition}
An affine manifold with singularities $B$ defines a scattering diagram $\mathscr{S}_0(B)$ by taking all possible slabs ($2$ for each affine singularity).
\end{definition}

\begin{definition}
A point $x\in B$ where two or more rays intersect is called a \emph{vertex}. (This is sometimes called a \emph{joint}.) Locally at a vertex $x$ we produce new rays with base $x$ by the following \emph{scattering procedure}. For each ray $\mathfrak{d}$ containing $x$ consider the complement $\mathfrak{d}\setminus\{x\}$. This consists of one component (if $x=b_{\mathfrak{d}}$) or two components (otherwise). We cyclically order all such components (with respect to a simple loop around $x$ that is disjoint from $\text{Sing}(B)$) to obtain a sequence $\mathfrak{d}_1,\ldots,\mathfrak{d}_s$. Each $\mathfrak{d}_i$ defines a $\mathbb{C}\llbracket t\rrbracket$-automorphism $\theta_{\mathfrak{d}_i}$ of the localized ring $(R_x)_{\prod f_{\mathfrak{d}_i}}$ by
\[ \theta_{\mathfrak{d}_i}(z^m) = z^mf_{\mathfrak{d}_i}^{\langle n_i,m \rangle}, \]
where $n_i$ is the primitive normal vector to $m_{\mathfrak{d}_i}$, positive with respect to the chosen ordering (i.e. the orientation coming from the loop around $x$). Now, for $k \geq 0$, define
\[ \theta^{(k)} = \theta_{\mathfrak{d}_s} \circ \ldots \circ \theta_{\mathfrak{d}_1} \text{ mod } (t^{k+1}). \]
It turns out that this acts on $z^m$ by multiplication with a Laurent polynomial. Hence, to make $\theta^k$ the identity, we have to add finitely many rays. Doing this iteratively for all $k\geq 0$, we obtain scattering diagrams $\mathscr{S}_0,\mathscr{S}_1,\ldots$ that are \emph{consistent to order $k$}. Taking the formal limit we get a consistent scattering diagram $\mathscr{S}_\infty$.
\end{definition}

\begin{remark}
The idea of the Gross-Siebert program is that the dual intersection complex of $X$ after scattering, i.e. $\mathscr{S}_\infty(B)$, is the intersection complex of its mirror $\check{X}$. One can construct $\check{X}$ from $B$ by gluing thickenings of affine models \cite{GS12}. The consistency of $\mathscr{S}_\infty(B)$ ensures that the gluing does not depend on the affine chart.
\end{remark}

\subsection{Broken lines} 

\begin{definition}
\label{defi:broken}
A \textit{broken line} for a wall structure $\mathscr{S}$ on $B$ is a proper continuous map $\mathfrak{b} : (-\infty,0] \rightarrow B_0$ with image disjoint from any vertices of $\mathscr{S}$, along with a sequence $-\infty=t_0<t_1<\cdots<t_r=0$ for some $r\geq 1$ with $\mathfrak{b}(t_i)\in|\mathscr{S}|$ for $i\leq r-1$, and for each $i=1,\ldots,r$ an expression $a_iz^{m_i}$ with $a_i\in \mathbb{C}\setminus\{0\}$, $m_i\in\Lambda_{\mathfrak{b}(t)}$ for any $t\in(t_{i-1},t_i)$, defined at all points of $\mathfrak{b}([t_{i-1},t_i])$, and subject to the following conditions:
\begin{itemize}
\item $\mathfrak{b}|_{(t_{i-1},t_i)}$ is a non-constant affine map with image contained in a unique chamber $\mathfrak{u}_i$ of $\mathscr{S}$, and $\mathfrak{b}'(t)=-m_i$ for all $t\in(t_{i-1},t_i)$.
\item For each $i=1,\ldots,r-1$ the expression $a_{i+1}z^{m_{i+1}}$ is a result of transport of $a_iz^{m_i}$ from $\mathfrak{u}_i$ to $\mathfrak{u}_{i+1}$, i.e., is a term in the expansion of $\theta_{\mathfrak{d}}(a_iz^{m_i})$, where $\mathfrak{d}$ is the ray that contains the intersection of closures $\bar{\mathfrak{u}_i}\cap\bar{\mathfrak{u}_{i+1}}$.
\item $a_1=1$ and $(m_1,h)$ has $t$-order zero at $\mathfrak{b}(t_1)$, i.e., $h=\varphi(m_1)$.
\end{itemize}
Write $a_{\mathfrak{b}}z^{m_{\mathfrak{b}}}$ for the \emph{ending monomial} $a_rz^{m_r}$.
\end{definition}

\begin{example}
Figure \ref{fig:F3broken} shows the scattering diagram and some broken lines for $\mathbb{F}_3$ in two different charts. It was produced by using a \texttt{SageMath} code which can be found on the second authors webpage.
\end{example}

\begin{figure}[h!]
\centering
\begin{tikzpicture}[scale=1.5]
\clip (-5,-2) rectangle(2,1.5);
\draw[->] (-18.0, -6.0) -- (-95.5, -31.0);
\draw[->] (-18.0, -6.0) -- (-96.125, -31.0);
\draw[->] (-18.0, -6.0) -- (-96.125, -31.0);
\draw[->] (-75.0, -24.0) -- (-97.0, -31.0);
\draw[->] (-18.0, -6.0) -- (-96.571, -31.0);
\draw[->] (-3.0, -1.0) -- (-97.0, -29.2);
\draw[->] (-3.0, -1.0) -- (-83.0, -31.0);
\draw[->] (-3.0, 0.0) -- (-97.0, -15.667);
\draw[->] (-1.667, -0.667) -- (-77.5, -31.0);
\draw[->] (-1.667, -0.667) -- (-77.5, -31.0);
\draw[-] (-1.0, -0.5) -- (-5.0, -1.5);
\draw[->] (-3.0, -0.5) -- (-3.0, 31.0);
\draw[->] (-3.0, -1.0) -- (-93.0, -31.0);
\draw[->] (-3.0, -1.0) -- (-93.0, -31.0);
\draw[->] (-1.667, -0.667) -- (-92.667, -31.0);
\draw[->] (-3.0, 0.0) -- (-96.0, 31.0);
\draw[-] (-2.0, -0.5) -- (-4.0, -1.5);
\draw[->] (0.0, -0.5) -- (28.0, 4.167);
\draw[->] (-1.667, -0.667) -- (-62.333, -31.0);
\draw[-] (-0.5, 0.5) -- (-1.75, -0.75);
\draw[->] (-0.75, -0.5) -- (28.0, 4.727);
\draw[->] (-3.0, 0.0) -- (-97.0, 0.0);
\draw[->] (-3.0, 0.0) -- (-97.0, 0.0);
\draw[-] (0.5, 0.5) -- (-0.5, 1.5);
\draw[->] (-1.5, 0.5) -- (-96.0, -31.0);
\draw[->] (0.0, 1.0) -- (-10.0, 31.0);
\draw[->] (-3.0, 24.0) -- (-4.0, 31.0);
\draw[->] (0.0, 6.0) -- (-3.571, 31.0);
\draw[->] (0.0, 6.0) -- (-3.125, 31.0);
\draw[->] (0.0, 6.0) -- (-3.125, 31.0);
\draw[->] (0.0, 6.0) -- (-2.5, 31.0);
\draw[->] (0.0, 1.0) -- (0.0, 31.0);
\draw[->] (0.333, 0.667) -- (0.333, 31.0);
\draw[->] (0.0, 1.0) -- (0.0, 31.0);
\draw[-] (0.5, 0.5) -- (1.5, -0.5);
\draw[->] (-8.5, -3.0) -- (-97.0, -30.947);
\draw[->] (1.0, -0.0) -- (28.0, -0.0);
\draw[->] (2.143, 0.286) -- (28.0, 0.286);
\draw[->] (1.286, -0.286) -- (28.0, -0.286);
\draw[->] (3.0, -0.0) -- (28.0, -0.0);
\draw[->] (-1.0, 0.0) -- (28.0, 0.0);
\draw[->] (2.0, -0.0) -- (28.0, -0.0);
\draw[->] (3.0, -0.0) -- (28.0, -0.0);
\draw[->] (3.0, -0.0) -- (28.0, -0.0);
\draw[->] (3.0, -0.0) -- (28.0, -0.0);
\draw[->] (3.0, -0.0) -- (28.0, -0.0);
\draw[->] (3.0, -0.0) -- (28.0, -0.0);
\draw[->] (3.0, -0.0) -- (28.0, -0.0);
\draw[->] (3.0, -0.0) -- (28.0, -0.0);
\draw[->] (3.0, -0.0) -- (28.0, -0.0);
\draw[->] (3.0, -0.0) -- (28.0, -0.0);
\draw[->] (3.0, -0.0) -- (28.0, -0.0);
\draw[->] (3.0, -0.0) -- (28.0, -0.0);
\draw[->] (3.0, -0.0) -- (28.0, -0.0);
\draw[->] (3.0, -0.0) -- (28.0, -0.0);
\draw[->] (3.0, -0.0) -- (28.0, -0.0);
\draw[->] (3.0, -0.0) -- (28.0, -0.0);
\draw[->] (3.0, -0.0) -- (28.0, -0.0);
\draw[->] (3.0, -0.0) -- (28.0, -0.0);
\draw[->] (3.0, -0.0) -- (28.0, -0.0);
\draw[->] (3.0, -0.0) -- (28.0, -0.0);
\draw[->] (3.0, -0.0) -- (28.0, -0.0);
\draw[->] (1.615, 0.154) -- (28.0, 0.154);
\draw[->] (2.0, -0.0) -- (28.0, -0.0);
\draw[->] (2.0, -0.0) -- (28.0, -0.0);
\draw[->] (2.0, -0.0) -- (28.0, -0.0);
\draw[->] (1.0, -0.0) -- (28.0, -0.0);
\draw[->] (1.0, -0.0) -- (28.0, -0.0);
\draw[->] (2.294, -0.118) -- (28.0, -0.118);
\draw[->] (1.286, -0.286) -- (28.0, -0.286);
\draw[->] (1.154, -0.154) -- (28.0, -0.154);
\draw[->] (2.647, 0.118) -- (28.0, 0.118);
\draw[->] (2.143, 0.286) -- (28.0, 0.286);
\draw[->] (2.379, 0.069) -- (28.0, 0.069);
\draw[->] (2.172, -0.069) -- (28.0, -0.069);
\draw[-] (-0.5, 0.5) -- (0.5, 1.5);
\draw[->] (1.5, 0.5) -- (28.0, -8.333);
\draw[->] (0.333, 0.667) -- (28.0, 28.333);
\draw[->] (0.333, 0.667) -- (15.5, 31.0);
\draw[->] (0.333, 0.667) -- (15.5, 31.0);
\draw[->] (0.0, 1.0) -- (10.0, 31.0);
\draw[->] (1.0, -0.0) -- (28.0, -13.5);
\draw[->] (1.0, -0.0) -- (28.0, -13.5);
\draw[-] (-2.0, -0.5) -- (0.5, 0.75);
\draw[->] (0.75, 0.5) -- (28.0, -10.4);
\draw[->] (2.0, -0.0) -- (28.0, -8.667);
\draw[->] (1.0, -0.0) -- (28.0, -9.0);
\draw[->] (3.0, -0.0) -- (28.0, -6.25);
\draw[->] (3.0, -0.0) -- (28.0, -6.25);
\draw[-] (-1.0, -0.5) -- (3.0, 0.5);
\draw[->] (0.5, 3.0) -- (-4.167, 31.0);
\draw[->] (3.0, -0.0) -- (28.0, -5.0);
\draw[->] (3.0, -0.0) -- (28.0, -5.0);
\draw[->] (2.0, -0.0) -- (28.0, -5.2);
\draw[->] (2.0, -0.0) -- (28.0, -5.2);
\draw[->] (1.0, -0.0) -- (28.0, 5.4);
\draw[->] (1.0, -0.0) -- (28.0, 5.4);
\draw[->] (3.0, -0.0) -- (28.0, -4.167);
\draw[->] (3.0, -0.0) -- (28.0, -4.167);
\draw[->] (3.0, -0.0) -- (28.0, -4.167);
\draw[->] (1.0, -0.0) -- (28.0, -4.5);
\draw[->] (2.714, -0.286) -- (28.0, -4.5);
\draw[->] (2.714, -0.286) -- (28.0, -4.5);
\draw[->] (2.0, -0.0) -- (28.0, 4.333);
\draw[->] (3.0, -0.0) -- (28.0, -3.571);
\draw[->] (3.0, -0.0) -- (28.0, -3.571);
\draw[->] (1.0, -0.0) -- (28.0, -3.857);
\draw[->] (3.75, -0.25) -- (28.0, -3.714);
\draw[->] (3.75, -0.25) -- (28.0, -3.714);
\draw[->] (3.0, -0.0) -- (28.0, 3.571);
\draw[->] (3.0, -0.0) -- (28.0, 3.571);
\draw[->] (3.0, -0.0) -- (28.0, -3.125);
\draw[->] (3.0, -0.0) -- (28.0, -3.125);
\draw[->] (2.0, -0.0) -- (28.0, -3.25);
\draw[->] (1.0, -0.0) -- (28.0, -3.375);
\draw[->] (1.286, -0.286) -- (28.0, -3.625);
\draw[->] (3.0, -0.0) -- (28.0, 3.125);
\draw[->] (2.0, -0.0) -- (28.0, 3.25);
\draw[->] (2.0, -0.0) -- (28.0, 3.25);
\draw[->] (1.615, 0.154) -- (28.0, -2.778);
\draw[->] (3.0, -0.0) -- (28.0, -2.778);
\draw[->] (3.0, -0.0) -- (28.0, -2.778);
\draw[->] (3.0, -0.0) -- (28.0, -2.778);
\draw[->] (1.615, 0.154) -- (28.0, -2.778);
\draw[->] (3.0, -0.0) -- (28.0, 2.778);
\draw[->] (3.0, -0.0) -- (28.0, 2.778);
\draw[->] (3.0, -0.0) -- (28.0, 2.778);
\draw[->] (1.0, -0.0) -- (28.0, 3.0);
\draw[->] (3.571, 0.286) -- (28.0, 3.0);
\draw[->] (3.571, 0.286) -- (28.0, 3.0);
\draw[->] (3.0, -0.0) -- (28.0, -2.5);
\draw[->] (3.0, -0.0) -- (28.0, -2.5);
\draw[->] (3.0, -0.0) -- (28.0, -2.5);
\draw[->] (12.0, -1.0) -- (28.0, -2.6);
\draw[->] (3.857, -0.286) -- (28.0, -2.7);
\draw[->] (3.857, -0.286) -- (28.0, -2.7);
\draw[->] (2.143, 0.286) -- (28.0, -2.3);
\draw[->] (3.0, -0.0) -- (28.0, 2.5);
\draw[->] (3.0, -0.0) -- (28.0, 2.5);
\draw[->] (1.0, -0.0) -- (28.0, 2.7);
\draw[->] (4.5, 0.25) -- (28.0, 2.6);
\draw[->] (4.5, 0.25) -- (28.0, 2.6);
\draw[->] (3.0, -0.0) -- (28.0, -4.545);
\draw[->] (2.0, -0.0) -- (28.0, -4.727);
\draw[->] (2.294, -0.118) -- (28.0, -2.455);
\draw[->] (3.0, -0.0) -- (28.0, -2.273);
\draw[->] (3.0, -0.0) -- (28.0, -2.273);
\draw[->] (2.294, -0.118) -- (28.0, -2.455);
\draw[->] (3.0, -0.0) -- (28.0, 2.273);
\draw[->] (3.0, -0.0) -- (28.0, 2.273);
\draw[->] (2.0, -0.0) -- (28.0, 2.364);
\draw[->] (1.0, -0.0) -- (28.0, 2.455);
\draw[->] (2.143, 0.286) -- (28.0, 2.636);
\draw[->] (3.0, -0.0) -- (28.0, -2.083);
\draw[->] (3.0, -0.0) -- (28.0, -2.083);
\draw[->] (1.154, -0.154) -- (28.0, 2.083);
\draw[->] (3.0, -0.0) -- (28.0, 2.083);
\draw[->] (3.0, -0.0) -- (28.0, 2.083);
\draw[->] (3.0, -0.0) -- (28.0, 2.083);
\draw[->] (1.154, -0.154) -- (28.0, 2.083);
\draw[->] (3.0, -0.0) -- (28.0, -3.846);
\draw[->] (3.0, -0.0) -- (28.0, -3.846);
\draw[->] (3.857, -0.286) -- (28.0, -4.0);
\draw[->] (3.0, -0.0) -- (28.0, -1.923);
\draw[->] (3.0, -0.0) -- (28.0, -1.923);
\draw[->] (3.0, -0.0) -- (28.0, -1.923);
\draw[->] (2.0, -0.0) -- (28.0, -2.0);
\draw[->] (3.0, -0.0) -- (28.0, 1.923);
\draw[->] (3.0, -0.0) -- (28.0, 1.923);
\draw[->] (1.286, -0.286) -- (28.0, 1.769);
\draw[->] (15.0, 1.0) -- (28.0, 2.0);
\draw[->] (4.714, 0.286) -- (28.0, 2.077);
\draw[->] (4.714, 0.286) -- (28.0, 2.077);
\draw[->] (1.615, 0.154) -- (28.0, -5.5);
\draw[->] (3.0, -0.0) -- (28.0, -1.786);
\draw[->] (3.0, -0.0) -- (28.0, -1.786);
\draw[->] (3.0, -0.0) -- (28.0, -1.786);
\draw[->] (3.0, -0.0) -- (28.0, -1.786);
\draw[->] (3.0, -0.0) -- (28.0, -1.786);
\draw[->] (2.647, 0.118) -- (28.0, 1.929);
\draw[->] (3.0, -0.0) -- (28.0, 1.786);
\draw[->] (3.0, -0.0) -- (28.0, 1.786);
\draw[->] (2.647, 0.118) -- (28.0, 1.929);
\draw[->] (2.0, -0.0) -- (28.0, -6.933);
\draw[->] (2.0, -0.0) -- (28.0, -3.467);
\draw[->] (3.0, -0.0) -- (28.0, -1.667);
\draw[->] (3.0, -0.0) -- (28.0, -1.667);
\draw[->] (3.0, -0.0) -- (28.0, 1.667);
\draw[->] (3.0, -0.0) -- (28.0, 1.667);
\draw[->] (3.0, -0.0) -- (28.0, -4.688);
\draw[->] (2.294, -0.118) -- (28.0, -4.938);
\draw[->] (3.0, -0.0) -- (28.0, -1.563);
\draw[->] (5.571, -0.286) -- (28.0, -1.688);
\draw[->] (5.571, -0.286) -- (28.0, -1.688);
\draw[->] (3.0, -0.0) -- (28.0, 1.563);
\draw[->] (3.0, -0.0) -- (28.0, 1.563);
\draw[->] (2.0, -0.0) -- (28.0, 1.625);
\draw[->] (2.714, -0.286) -- (28.0, -6.235);
\draw[->] (3.75, -0.25) -- (28.0, -4.529);
\draw[->] (3.0, -0.0) -- (28.0, -2.941);
\draw[->] (3.0, -0.0) -- (28.0, -2.941);
\draw[->] (3.0, -0.0) -- (28.0, -1.471);
\draw[->] (3.857, -0.286) -- (28.0, -1.706);
\draw[->] (2.172, -0.069) -- (28.0, -1.588);
\draw[->] (2.172, -0.069) -- (28.0, -1.588);
\draw[->] (3.0, -0.0) -- (28.0, 1.471);
\draw[->] (3.0, -0.0) -- (28.0, 1.471);
\draw[->] (3.0, -0.0) -- (28.0, 1.471);
\draw[->] (3.0, -0.0) -- (28.0, 1.471);
\draw[->] (3.0, -0.0) -- (28.0, 2.941);
\draw[->] (2.0, -0.0) -- (28.0, 3.059);
\draw[->] (3.0, -0.0) -- (28.0, -1.389);
\draw[->] (3.0, -0.0) -- (28.0, -1.389);
\draw[->] (3.0, -0.0) -- (28.0, -1.389);
\draw[->] (3.0, -0.0) -- (28.0, -1.389);
\draw[->] (4.143, -0.286) -- (28.0, -1.611);
\draw[->] (3.0, -0.0) -- (28.0, 1.389);
\draw[->] (2.714, -0.286) -- (28.0, -2.947);
\draw[->] (3.0, -0.0) -- (28.0, -1.316);
\draw[->] (3.0, -0.0) -- (28.0, -1.316);
\draw[->] (3.0, -0.0) -- (28.0, -1.316);
\draw[->] (3.0, -0.0) -- (28.0, -1.316);
\draw[->] (3.0, -0.0) -- (28.0, -1.316);
\draw[->] (3.0, -0.0) -- (28.0, 1.316);
\draw[->] (6.429, 0.286) -- (28.0, 1.421);
\draw[->] (6.429, 0.286) -- (28.0, 1.421);
\draw[->] (3.0, -0.0) -- (28.0, 2.632);
\draw[->] (3.0, -0.0) -- (28.0, 2.632);
\draw[->] (4.714, 0.286) -- (28.0, 2.737);
\draw[->] (3.0, -0.0) -- (28.0, -3.75);
\draw[->] (3.0, -0.0) -- (28.0, -1.25);
\draw[->] (3.0, -0.0) -- (28.0, 1.25);
\draw[->] (2.379, 0.069) -- (28.0, 1.35);
\draw[->] (2.379, 0.069) -- (28.0, 1.35);
\draw[->] (3.0, -0.0) -- (28.0, -2.381);
\draw[->] (2.0, -0.0) -- (28.0, -2.476);
\draw[->] (3.0, -0.0) -- (28.0, -1.19);
\draw[->] (3.0, -0.0) -- (28.0, 1.19);
\draw[->] (3.0, -0.0) -- (28.0, 1.19);
\draw[->] (3.0, -0.0) -- (28.0, 1.19);
\draw[->] (3.0, -0.0) -- (28.0, 1.19);
\draw[->] (5.0, 0.286) -- (28.0, 1.381);
\draw[->] (2.0, -0.0) -- (28.0, 2.476);
\draw[->] (2.172, -0.069) -- (28.0, -3.591);
\draw[->] (3.0, -0.0) -- (28.0, -1.136);
\draw[->] (3.0, -0.0) -- (28.0, 1.136);
\draw[->] (3.0, -0.0) -- (28.0, 1.136);
\draw[->] (3.0, -0.0) -- (28.0, 1.136);
\draw[->] (3.0, -0.0) -- (28.0, 1.136);
\draw[->] (3.0, -0.0) -- (28.0, 1.136);
\draw[->] (3.0, -0.0) -- (28.0, -2.174);
\draw[->] (3.0, -0.0) -- (28.0, -1.087);
\draw[->] (3.0, -0.0) -- (28.0, -1.087);
\draw[->] (3.0, -0.0) -- (28.0, -1.087);
\draw[->] (5.571, -0.286) -- (28.0, -1.261);
\draw[->] (3.0, -0.0) -- (28.0, 1.087);
\draw[->] (3.0, -0.0) -- (28.0, 2.174);
\draw[->] (3.0, -0.0) -- (28.0, 2.174);
\draw[->] (1.154, -0.154) -- (28.0, 3.348);
\draw[->] (3.0, -0.0) -- (28.0, -1.042);
\draw[->] (3.0, -0.0) -- (28.0, -1.042);
\draw[->] (3.0, -0.0) -- (28.0, -1.042);
\draw[->] (3.0, -0.0) -- (28.0, -1.042);
\draw[->] (3.0, -0.0) -- (28.0, -1.042);
\draw[->] (3.0, -0.0) -- (28.0, -1.042);
\draw[->] (3.0, -0.0) -- (28.0, 1.042);
\draw[->] (3.75, -0.25) -- (28.0, -3.16);
\draw[->] (5.571, -0.286) -- (28.0, -2.08);
\draw[->] (3.0, -0.0) -- (28.0, -1.0);
\draw[->] (3.0, -0.0) -- (28.0, -1.0);
\draw[->] (3.0, -0.0) -- (28.0, 1.0);
\draw[->] (3.571, 0.286) -- (28.0, 2.24);
\draw[->] (3.0, -0.0) -- (28.0, 3.0);
\draw[->] (2.647, 0.118) -- (28.0, 3.16);
\draw[->] (3.0, -0.0) -- (28.0, 0.962);
\draw[->] (3.0, -0.0) -- (28.0, 0.962);
\draw[->] (3.0, -0.0) -- (28.0, 0.962);
\draw[->] (4.5, 0.25) -- (28.0, 2.962);
\draw[->] (3.0, -0.0) -- (28.0, -1.852);
\draw[->] (3.0, -0.0) -- (28.0, -1.852);
\draw[->] (3.0, -0.0) -- (28.0, -1.852);
\draw[->] (3.0, -0.0) -- (28.0, -0.926);
\draw[->] (3.0, -0.0) -- (28.0, 0.926);
\draw[->] (3.0, -0.0) -- (28.0, 0.926);
\draw[->] (3.0, -0.0) -- (28.0, 0.926);
\draw[->] (3.0, -0.0) -- (28.0, 0.926);
\draw[->] (3.0, -0.0) -- (28.0, 0.926);
\draw[->] (3.0, -0.0) -- (28.0, 0.926);
\draw[->] (3.0, -0.0) -- (28.0, 1.852);
\draw[->] (2.0, -0.0) -- (28.0, 1.926);
\draw[->] (2.0, -0.0) -- (28.0, 3.852);
\draw[->] (3.0, -0.0) -- (28.0, -0.893);
\draw[->] (3.0, -0.0) -- (28.0, -0.893);
\draw[->] (3.0, -0.0) -- (28.0, 0.893);
\draw[->] (3.0, -0.0) -- (28.0, 0.893);
\draw[->] (3.0, -0.0) -- (28.0, -0.862);
\draw[->] (3.0, -0.0) -- (28.0, -0.862);
\draw[->] (3.0, -0.0) -- (28.0, 1.724);
\draw[->] (3.0, -0.0) -- (28.0, 2.586);
\draw[->] (3.571, 0.286) -- (28.0, 3.655);
\draw[->] (3.0, -0.0) -- (28.0, -0.833);
\draw[->] (3.0, -0.0) -- (28.0, -0.833);
\draw[->] (3.0, -0.0) -- (28.0, 0.833);
\draw[->] (3.0, -0.0) -- (28.0, -2.419);
\draw[->] (2.0, -0.0) -- (28.0, -1.677);
\draw[->] (3.0, -0.0) -- (28.0, 0.806);
\draw[->] (3.0, -0.0) -- (28.0, 0.806);
\draw[->] (6.429, 0.286) -- (28.0, 1.677);
\draw[->] (2.379, 0.069) -- (28.0, 2.548);
\draw[->] (3.0, -0.0) -- (28.0, -0.781);
\draw[->] (3.0, -0.0) -- (28.0, 0.781);
\draw[->] (3.0, -0.0) -- (28.0, 0.781);
\draw[->] (3.0, -0.0) -- (28.0, -1.515);
\draw[->] (3.0, -0.0) -- (28.0, -1.515);
\draw[->] (3.0, -0.0) -- (28.0, -0.758);
\draw[->] (3.0, -0.0) -- (28.0, -0.758);
\draw[->] (3.0, -0.0) -- (28.0, -0.758);
\draw[->] (3.0, -0.0) -- (28.0, 0.758);
\draw[->] (3.0, -0.0) -- (28.0, 0.758);
\draw[->] (3.0, -0.0) -- (28.0, 1.515);
\draw[->] (3.0, -0.0) -- (28.0, 1.515);
\draw[->] (3.0, -0.0) -- (28.0, 1.515);
\draw[->] (3.0, -0.0) -- (28.0, -0.735);
\draw[->] (3.0, -0.0) -- (28.0, -0.735);
\draw[->] (3.0, -0.0) -- (28.0, -0.735);
\draw[->] (4.5, 0.25) -- (28.0, 2.324);
\draw[->] (3.0, -0.0) -- (28.0, -0.714);
\draw[->] (3.0, -0.0) -- (28.0, 0.714);
\draw[->] (3.0, -0.0) -- (28.0, 0.694);
\draw[->] (3.0, -0.0) -- (28.0, 0.694);
\draw[->] (3.0, -0.0) -- (28.0, 0.694);
\draw[->] (3.0, -0.0) -- (28.0, -1.351);
\draw[->] (3.0, -0.0) -- (28.0, 0.676);
\draw[->] (3.0, -0.0) -- (28.0, 0.676);
\draw[->] (3.0, -0.0) -- (28.0, 0.676);
\draw[->] (2.0, -0.0) -- (28.0, 1.405);
\draw[->] (3.0, -0.0) -- (28.0, -0.658);
\draw[->] (3.0, -0.0) -- (28.0, -0.641);
\draw[->] (3.0, -0.0) -- (28.0, -0.641);
\draw[->] (3.0, -0.0) -- (28.0, -0.641);
\draw[->] (3.0, -0.0) -- (28.0, 1.282);
\draw[->] (3.0, -0.0) -- (28.0, 1.282);
\draw[->] (3.0, -0.0) -- (28.0, -0.625);
\draw[->] (3.0, -0.0) -- (28.0, 1.875);
\draw[->] (3.0, -0.0) -- (28.0, 0.61);
\draw[->] (3.0, -0.0) -- (28.0, 0.595);
\draw[->] (3.0, -0.0) -- (28.0, 0.595);
\draw[->] (3.0, -0.0) -- (28.0, 0.595);
\draw[->] (3.0, -0.0) -- (28.0, -1.163);
\draw[->] (3.0, -0.0) -- (28.0, -0.581);
\draw[->] (3.0, -0.0) -- (28.0, 0.581);
\draw[->] (3.0, -0.0) -- (28.0, 1.163);
\draw[->] (3.0, -0.0) -- (28.0, -0.568);
\draw[->] (3.0, -0.0) -- (28.0, -0.556);
\draw[->] (3.0, -0.0) -- (28.0, 0.543);
\draw[->] (3.0, -0.0) -- (28.0, 0.532);
\draw[->] (3.0, -0.0) -- (28.0, -0.521);
\draw[->] (3.0, -0.0) -- (28.0, 0.521);
\draw[->] (3.0, -0.0) -- (28.0, -0.51);
\draw[->] (3.0, -0.0) -- (28.0, 1.02);
\draw[->] (3.0, -0.0) -- (28.0, 0.49);
\draw[->] (3.0, -0.0) -- (28.0, 0.481);
\draw[->] (3.0, -0.0) -- (28.0, -0.463);
\draw[->] (3.0, -0.0) -- (28.0, 0.439);
\fill[white] (10,.5) -- (.5,.5) -- (.5,10);
\fill[white] (-10,.5) -- (-.5,.5) -- (-.5,10);
\fill[white] (-10,-.5) -- (-2,-.5) -- (-11,-3.5);
\fill[white] (-10,-3.5) -- (-1,-.5) -- (10,-.5);
\draw[dashed] (0.5, 0.5) -- (10, 0.5);
\draw[dashed] (0.5, 0.5) -- (0.5, 10);
\draw[dashed] (-0.5, 0.5) -- (-0.5, 10);
\draw[dashed] (-0.5, 0.5) -- (-10, 0.5);
\draw[dashed] (-2.0, -0.5) -- (-10, -0.5);
\draw[dashed] (-2.0, -0.5) -- (-11, -3.5);
\draw[dashed] (-1.0, -0.5) -- (-10, -3.5);
\draw[dashed] (-1.0, -0.5) -- (10, -0.5);
\draw[red] (-10,-.3) -- (-3,-.3) -- (-2.7,.1) -- (-3.1,.1);
\fill[red] (-3.1,.1) circle (1pt);
\end{tikzpicture}
\hspace{1cm}
\begin{tikzpicture}[scale=.6,rotate=90]
\draw[dashed] (0.0, 0.5) -- (-1.0, 1.0);
\draw[dashed] (0.5, 1.5) -- (-1.0, 1.0);
\draw[dashed] (0.5, 1.5) -- (0.0, 2.0);
\draw[dashed] (0.5, 2.5) -- (0.0, 2.0);
\draw[dashed] (0.5, 2.5) -- (-1.0, 3.0);
\draw[dashed] (-3.0, 3.5) -- (-1.0, 3.0);
\draw[dashed] (0.0, 0.5) -- (-1.0, 0.0);
\draw[dashed] (-1.0, -0.5) -- (-1.0, 0.0);
\draw[dashed] (-1.0, -0.5) -- (-3.0, -1.0);
\draw[dashed] (-5.5, -1.5) -- (-3.0, -1.0);
\draw[->] (-1.0, -0.5) -- (-3.0, -1.5);
\draw[-] (0.5, 1.5) -- (-0.333, 0.667);
\draw[->] (-0.333, 0.333) -- (-1.25, -1.5);
\draw[-] (0.0, 1.0) -- (-1, 1.0);
\draw[->] (0.5, 2.5) -- (-0.5, 3.5);
\draw[->] (0.0, 0.5) -- (0.0, -1.5);
\draw[-] (0.0, 0.5) -- (0.0, 1.333);
\draw[->] (0.333, 1.667) -- (1.25, 3.5);
\draw[->] (0.5, 2.5) -- (3.0, 0.0);
\draw[->] (-0.667, -0.333) -- (0.5, -1.5);
\draw[->] (2.0, 1.0) -- (3.0, 1.0);
\draw[->] (0.0, 0.0) -- (3.0, 0.0);
\draw[->] (0.0, -0.714) -- (3.0, -0.714);
\draw[->] (1.714, 2.714) -- (3.0, 2.714);
\draw[->] (1.0, 2.0) -- (3.0, 2.0);
\draw[->] (-2.0, -1.0) -- (3.0, -1.0);
\draw[->] (-0.0, 3.0) -- (3.0, 3.0);
\draw[->] (0.5, 1.5) -- (2.5, 3.5);
\draw[->] (-1.0, -0.5) -- (3.0, 1.5);
\draw[->] (0.667, 2.333) -- (3.0, 3.5);
\draw[->] (-3.0, 3.5) -- (3.0, 2.5);
\draw[->] (-0.923, -0.846) -- (3.0, -1.5);
\draw[->] (-5.5, -1.5) -- (3.0, -0.286);
\draw[->] (0.923, 2.846) -- (3.0, 3.143);
\draw[red] (3,1.3) -- (1.7,1.3) -- (1.7,.85) -- (1.9,.85);
\fill[red] (1.9,.85) circle (1pt);
\end{tikzpicture}
\caption{Scattering diagram and a broken line for $\mathbb{F}_3$.}
\label{fig:F3broken}
\end{figure}

\subsection{Theta functions} 

In \cite{GHS}, theta functions are defined as sums of ending monomials of broken lines. They generalize the notion of theta functions on abelian varieties by the Mumford construction, as explained in \cite{GHS}, \S6.

\begin{definition}
\label{def:theta}
Let $(X,D)$ be a smooth log Calabi-Yau pair with dual intersection complex $B$. Let $m$ be an asymptotic direction on $B$. Let $\mathfrak{B}_m(B)_P$ be the set of broken lines $\mathfrak{b}$ on $\mathscr{S}_\infty(B)$ with initial monomial $a_1z^{m_1}=z^m$ and endpoint $\mathfrak{b}(0)=P$. Define the \emph{theta function}
\[ \vartheta_m(X,D)_P = \sum_{\mathfrak{b}\in\mathfrak{B}_m(B)_P} a_{\mathfrak{b}} z^{m_{\mathfrak{b}}}. \]
\end{definition}

In our case, with smooth divisor $D$, the dual intersection complex $B$ has exactly one unbounded direction $m_{\textup{out}}$, so asymptotic directions on $B$ are just multiples of $m_{\textup{out}}$. We write $\vartheta_q(X,D)_P$ for $\vartheta_{q\cdot m_{\textup{out}}}(X,D)_P$ with $q\in \mathbb{N}$. By \cite{CPS}, Lemma 4.7, $\vartheta_q(X,D)_P$ is the same for all points $P$ within a chamber of the scattering diagram. Wall crossing to an adjacent chamber corresponds to a mutation (see Proposition \ref{prop:crossmut}). We show in Theorem \ref{thm:theta} that for every chamber $\vartheta_1(X,D)_P$ is a mirror potential for $(X,D)$.

\begin{proposition}[\cite{GHS}, Theorem 3.24, \cite{GSintrinsic}, Theorem 1.9]
\label{prop:multiplication}
Theta functions generate a commutative ring with unit $\vartheta_0$ by the multiplication rule
\[ \vartheta_p(X,D)_P \cdot \vartheta_q(X)_P = \sum_{r=0}^\infty \alpha_{p,q}^r \vartheta_r(X,D)_P \]
with structure constants
\[ \alpha_{p,q}^r = \sum_{\substack{(\mathfrak{b}_1,\mathfrak{b}_2)\in\mathfrak{B}_p(B)_P\times\mathfrak{B}_q(B)_P \\ m_{\mathfrak{b}_1}+m_{\mathfrak{b}_2}= \ r}} a_{\mathfrak{b}_1}a_{\mathfrak{b}_2}. \]
The theta functions $\vartheta_p(X,D)_P$ only depend on the chamber containing $P$ and the structure constants $\alpha_{p,q}^r$ do not depend on $P$.
\end{proposition}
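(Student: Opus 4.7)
The plan is to follow the Gross--Hacking--Siebert strategy: consistency of the scattering diagram $\mathscr{S}_\infty(B)$ is the engine that drives both chamber independence of theta functions and the well-definedness of the structure constants. I would proceed in three steps, of which the third is the substantive one.

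\emph{Step 1: Chamber independence of $\vartheta_p$.} If $P,P'$ lie in the same chamber $\mathfrak{u}$, I connect them by a path in $\mathfrak{u}$ avoiding vertices of $\mathscr{S}_\infty(B)$. Each broken line $\mathfrak{b}\in\mathfrak{B}_p(B)_P$ can be deformed to one ending at $P'$ by adjusting only its final affine segment; the chamber sequence $\mathfrak{u}_1,\ldots,\mathfrak{u}_r$ and every monomial $a_iz^{m_i}$ along it are preserved. This yields a bijection $\mathfrak{B}_p(B)_P\cong\mathfrak{B}_p(B)_{P'}$ that preserves the ending monomial, so $\vartheta_p(X,D)_P=\vartheta_p(X,D)_{P'}$.

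\emph{Step 2: Wall-crossing identity.} For $P_\pm$ on opposite sides of a wall $\mathfrak{d}$, I would establish $\vartheta_p(X,D)_{P_+}=\theta_{\mathfrak{d}}\bigl(\vartheta_p(X,D)_{P_-}\bigr)$ by classifying each broken line ending at $P_+$ according to whether its last bend occurs at $\mathfrak{d}$. The unbent ones correspond bijectively to broken lines ending at $P_-$, while the bent ones match, term by term in the monomial expansion of $\theta_{\mathfrak{d}}$, with broken lines at $P_-$ having one fewer segment.

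\emph{Step 3: Multiplication formula.} Expanding directly gives
\[ \vartheta_p(X,D)_P\cdot\vartheta_q(X,D)_P=\sum_{(\mathfrak{b}_1,\mathfrak{b}_2)\in\mathfrak{B}_p(B)_P\times\mathfrak{B}_q(B)_P} a_{\mathfrak{b}_1}a_{\mathfrak{b}_2}\,z^{m_{\mathfrak{b}_1}+m_{\mathfrak{b}_2}}. \]
To identify this with $\sum_r\alpha_{p,q}^r\,\vartheta_r(X,D)_P$, I would choose a reference point $P^\infty$ lying deep in an unbounded chamber far along $-m_{\textup{out}}$. Modulo any fixed truncation $t^{k+1}$, no broken line of asymptotic class $r$ ending at $P^\infty$ has room to bend, so $\vartheta_r(X,D)_{P^\infty}=z^{r\cdot m_{\textup{out}}}$; consequently $\alpha_{p,q}^r(P^\infty)$ is literally the coefficient of $z^{r\cdot m_{\textup{out}}}$ in the product expansion above, and the sum $\sum_r\alpha_{p,q}^r\vartheta_r$ reproduces $\vartheta_p\vartheta_q$ at $P^\infty$. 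By Step 2, each wall-crossing $\theta_{\mathfrak{d}}$ is a ring homomorphism, so transporting the identity along any path from $P^\infty$ to $P$ preserves both sides and yields the multiplication rule at $P$ with the same structure constants; this simultaneously proves $P$-independence of the $\alpha_{p,q}^r$. Commutativity then follows from commutativity of the ring $R_x$, and the trivial broken line shows $\vartheta_0=1$ acts as the unit.

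\emph{Main obstacle.} The delicate point is justifying the transport argument of Step 3 in the presence of infinitely many walls, which is exactly the non-Fano regime of interest. I would work modulo $t^{k+1}$ for each $k$ (where only finitely many walls contribute nontrivially), verify the identity at that order, and then take the formal limit in $R_P$. The substantive bookkeeping is checking that the grouping of pairs $(\mathfrak{b}_1,\mathfrak{b}_2)$ by the total exponent $m_{\mathfrak{b}_1}+m_{\mathfrak{b}_2}$ interacts compatibly with iterated wall-crossings; this is precisely what the consistency of $\mathscr{S}_\infty(B)$ guarantees, and it is the input without which the whole argument collapses.
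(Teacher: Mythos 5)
The paper does not actually prove this proposition; it is quoted verbatim from \cite{GHS}, Theorem 3.24 and \cite{GSintrinsic}, Theorem 1.9, so there is no in-paper argument to compare against. Judged on its own terms, your Steps 1 and 2 are fine and standard (chamber independence by deforming the final segment, and the wall-crossing compatibility $\vartheta_p(X,D)_{P_+}=\theta_{\mathfrak{d}}(\vartheta_p(X,D)_{P_-})$, which is exactly the consistency of $\mathscr{S}_\infty(B)$ and Proposition \ref{prop:crossmut}).

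Step 3, however, contains a genuine error. You claim that for $P^\infty$ deep in an unbounded chamber, no broken line of asymptotic class $r$ has room to bend, so that $\vartheta_r(X,D)_{P^\infty}=z^{r\cdot m_{\textup{out}}}$. This is false, and it directly contradicts Corollary \ref{cor:theta} of this very paper: a broken line with initial direction $q\,m_{\textup{out}}$ travels \emph{into} the bounded region (its segments have velocity $-m_i$), scatters there, and can return to the unbounded chamber with ending exponent a \emph{negative} multiple of $m_{\textup{out}}$; these contributions have fixed positive $t$-order independent of how far out $P^\infty$ sits, so they survive any truncation $t^{k+1}$ with $k$ large. (If your claim were true, $\vartheta_1(X,D)_\infty$ would equal $y$ and Theorem \ref{thm:main} would be vacuous.) What is actually true, and what the GHS argument uses, is the much weaker statement that the coefficient of $y^r$ for $r\geq 0$ in $\vartheta_s(X,D)_{P^\infty}$ is $\delta_{rs}$, since all the correction terms carry negative powers of $y$. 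With that, your definition of $\alpha_{p,q}^r$ as the coefficient of $y^r$ in the product is correct, and the non-negative powers of $y$ on the two sides of the multiplication rule match at $P^\infty$; but the negative powers of $y$ in $\vartheta_p\vartheta_q$ versus $\sum_r\alpha_{p,q}^r\vartheta_r$ do \emph{not} match by inspection. Closing that gap is the real content of \cite{GHS}, Theorem 3.24: one must show that a wall-crossing--consistent Laurent-series--valued family is uniquely determined by its non-negative asymptotic coefficients, i.e.\ that the $y^{-p}$ tails are forced by consistency. Your transport argument is the right framework, but as written it silently assumes this uniqueness rather than proving it.
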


\section{Tropical curves} 
\label{S:tropical}

In this section we recall the definition of tropical curves and tropical disks and how they relate to scattering diagrams and broken lines.

\begin{definition}
Let $B$ be an affine manifold with singularities, e.g. the dual intersection complex of a smooth log Calabi-Yau pair. Write $\text{Sing}(B)$ for the singular locus (for the structure of an affine manifold with singularities), and let $i : B_0 = B\setminus\text{Sing}(B) \hookrightarrow B$ be the inclusion of the regular locus. Let $\Lambda_B=i_\star\Lambda_{B_0}$ be the sheaf of integral affine tangent vectors. Over $B_0$, the stalks of $\Lambda_B$ are isomorphic to $\mathbb{Z}^{\dim B}$. Over $\text{Sing}(B)$, the stalks have lower rank, depending on the codimension of the affine singularity.
\end{definition}

\begin{definition}
\label{defi:tropcurve}
A \emph{tropical curve} on an affine manifold with singularities $B$ is a map $h : \Gamma \rightarrow B$ from a graph $\Gamma$, without bivalent vertices but possibly with some legs (non-compact edges with only one vertex), together with
\begin{enumerate}[(1)]
\item a non-negative integer $g_V$ (\textit{genus}) for each vertex $V$;
\item a non-negative integer $\ell_E$ (\textit{length}) for each compact edge $E$;
\item a weight vector $u_{(V,E)} \in \Lambda_{B,h(V)}$ for each flag $(V,E)$;
\end{enumerate}
such that $h$ respects the graph structure of $\Gamma$ and
\begin{enumerate}[(i)]
\item if $E$ is a compact edge with vertices $V_1$, $V_2$, then $h$ maps $E$ affine linearly to the line segment connecting $h(V_1)$ and $h(V_2)$, and $h(V_2)-h(V_1)=\ell_Eu_{(V_1,E)}$. In particular, $u_{(V_1,E)} = -u_{(V_2,E)}$. Here the affine structure on $\Gamma$ is given by the lengths $\ell_E$ of its edges;
\item if $E$ is a leg with vertex $V$, then $h$ maps $E$ affine linearly either to the ray $h(V)+\mathbb{R}_{\geq 0}u_{(V,E)}$ or to the line segment $[h(V),\delta)$ for $\delta$ an affine singularity of $B$ such that $\delta-h(V)\in\mathbb{R}_{>0} u_{(V,E)}$, i.e., $u_{(V,E)}$ points from $h(V)$ to $\delta$. Note that in the latter case the direction of $u_{(V,E)}$ is determined by $\Lambda_{B,\delta}\simeq\mathbb{Z}$.
\item at each vertex $V$ we have the \emph{balancing condition}
\[ \sum_{E\ni V}u_{(V,E)} = 0. \]
\end{enumerate}
We write the set of compact edges of $\Gamma$ as $E(\Gamma)$, the set of legs as $L(\Gamma)$, the set of legs mapped to a ray (\textit{unbounded legs}) as $L_\infty(\Gamma)$ and the set of legs mapped to an open line segment (\textit{bounded legs}) as $L_\Delta(\Gamma)$ (since such edges end at the singular locus $\Delta$ of $B$).
\end{definition}

\begin{remark}
All our tropical curves will be genus $0$, so that $g_V=0$ for all vertices and $\Gamma$ is a tree.
\end{remark}

\begin{definition}
\label{defi:mult}
Let $h : \Gamma \rightarrow B$ be a tropical curve. For a trivalent vertex $V\in V(\Gamma)$ define $m_V=\lvert u_{(V,E_1)}\wedge u_{(V,E_2)}\rvert=\lvert\text{det}(u_{(V,E_1)}|u_{(V,E_2)})\rvert$, where $E_1,E_2$ are any two edges adjacent to $V$. For a vertex $V\in V(\Gamma)$ of valency $\nu_V>3$ let $h_V$ be the one-vertex tropical curve describing $h$ locally at $V$ and let $h'_V$ be a deformation of $h_V$ to a trivalent tropical curve. It has $\nu_V-2$ vertices. Define $m_V=\prod_{V'\in V(h'_V)}m_{V'}$. For a bounded leg $L\in L_\Delta(\Gamma)$ define $m_L=(-1)^{w_L+1}/w_L^2$. Then define the \textit{multiplicity} of $h$ to be
\[ \text{Mult}(h) = \frac{1}{|\text{Aut}(h)|} \cdot \prod_V m_V \cdot \prod_{L\in L_\Delta(\Gamma)} m_L. \]
\end{definition}

\subsection{Tropical disks, scattering and broken lines} 

\begin{definition}
A \textit{tropical disk} $h^\circ : \Gamma \rightarrow B$ is a tropical curve with the choice of univalent vertex $V_\infty$, adjacent to a unique edge $E_\infty$, such that the balancing condition (Definition \ref{defi:tropcurve}, (iii)) only holds for vertices $V \neq V_\infty$. Define the multiplicity $m_{V_\infty}=1$, such that $\text{Mult}(h^\circ)$ is the multiplicity of the balanced tropical curve obtained by forgetting $V_\infty$.
\end{definition}

\begin{definition}
Let $\mathfrak{d}\in\mathscr{S}_\infty(B)$ be a ray and choose a point $P\in\textup{Int}(\mathfrak{d})$. Define $\mathfrak{H}^\circ_{\mathfrak{d},w}(B)$ to be the set of all tropical disks $h^\circ : \Gamma \rightarrow B$ with no unbounded legs and such that $h^\circ(V_\infty)=P$ and $u_{(V_\infty,E_\infty)}=-w\cdot m_{\mathfrak{d}}$.
\end{definition}

\begin{proposition}[\cite{Gra22}, Proposition 5.9]
\label{prop:scattering}
For a ray $\mathfrak{d}\in\mathscr{S}_\infty(B)$ we have
\[ \textup{log }f_{\mathfrak{d}} = \sum_{w=1}^\infty\sum_{h^\circ\in\mathfrak{H}^\circ_{\mathfrak{d},w}(B)} w \text{Mult}(h^\circ) t^w z^{wm_{\mathfrak{d}}}. \]
\end{proposition}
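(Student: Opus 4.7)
The plan is to prove the proposition by induction on the order in which the ray $\mathfrak{d}$ appears in the iterative construction of $\mathscr{S}_\infty(B)$ from the initial scattering diagram $\mathscr{S}_0(B)$. The base case handles the slabs at affine singularities, and the inductive step handles rays produced by the scattering procedure at vertices where previously-constructed rays meet.

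For the base case, let $\mathfrak{d}_0$ be a slab based at a singularity $\delta$ with $f_{\mathfrak{d}_0}=1+z^{m_{\mathfrak{d}_0}}$. Then
\[ \log f_{\mathfrak{d}_0} = \sum_{w\geq 1}\frac{(-1)^{w+1}}{w}z^{wm_{\mathfrak{d}_0}}. \]
The only tropical disks in $\mathfrak{H}^\circ_{\mathfrak{d}_0,w}(B)$ consist of a single bounded leg of weight $w$ connecting $V_\infty$ to $\delta$; there are no trivalent vertices, so $\text{Mult}(h^\circ)=m_L=(-1)^{w+1}/w^2$ by Definition \ref{defi:mult}, and $w\cdot\text{Mult}(h^\circ)=(-1)^{w+1}/w$, matching term by term.

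For the inductive step, let $P$ be a vertex of the scattering diagram at which rays $\mathfrak{d}_1,\ldots,\mathfrak{d}_s$ (all of strictly lower order than $\mathfrak{d}$) meet, and let $\mathfrak{d}$ be one of the outgoing rays produced to make $\theta^{(k)}=\theta_{\mathfrak{d}_s}\circ\cdots\circ\theta_{\mathfrak{d}_1}$ the identity. By the induction hypothesis, each $\log f_{\mathfrak{d}_i}$ is the generating function predicted by the proposition, i.e., a sum of $w_i\cdot\text{Mult}(h_i^\circ)t^{w_i}z^{w_im_{\mathfrak{d}_i}}$ over tropical subdisks $h_i^\circ\in\mathfrak{H}^\circ_{\mathfrak{d}_i,w_i}(B)$. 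Any tropical disk $h^\circ\in\mathfrak{H}^\circ_{\mathfrak{d},w}(B)$ has a vertex $V^\star$ mapped to $P$ whose adjacent edges either are the outgoing edge to $V_\infty$ of direction $-wm_{\mathfrak{d}}$ or pair up with sub-disks ending in direction $w_im_{\mathfrak{d}_i}$ at $P$. The balancing $\sum_iw_iu_{(V^\star,E_i)}=wm_{\mathfrak{d}}$ and the splitting of $\text{Mult}(h^\circ)$ as the product of the sub-disk multiplicities and the vertex factor $m_{V^\star}$ reduce the claim to a combinatorial identity between $\log\theta_{\mathfrak{d}}$ and iterated commutators of $\log\theta_{\mathfrak{d}_i}$. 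This is precisely the Kontsevich--Soibelman commutator formula, which for two incoming rays gives $[\log\theta_{\mathfrak{d}_1},\log\theta_{\mathfrak{d}_2}]=\langle n_1,m_2\rangle\log\theta_{\mathfrak{d}_{12}}$ with the Fock--Goncharov pairing precisely producing the factor $|u_1\wedge u_2|$ demanded by a trivalent vertex, and by the Baker--Campbell--Hausdorff expansion generalizes to the multivalent case by successive deformation into trivalent sub-graphs (matching the prescription in Definition \ref{defi:mult}). This is the content of the tropical vertex correspondence of \cite{GPS}, whose extension to scattering diagrams on affine manifolds with singularities underlies \cite{CPS} and \cite{Gra22}.

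The main obstacle is the combinatorial bookkeeping in the inductive step: one must show that every tropical disk in $\mathfrak{H}^\circ_{\mathfrak{d},w}(B)$ is produced exactly once by grafting sub-disks from the $\mathfrak{H}^\circ_{\mathfrak{d}_i,w_i}(B)$ along a vertex at $P$, with weight factor $w\cdot\text{Mult}(h^\circ)$ exactly equal to the coefficient of $t^wz^{wm_{\mathfrak{d}}}$ produced by BCH. The weight $w$ on the left-hand side arises because $\log\theta_{\mathfrak{d}}(z^m)=\langle n_{\mathfrak{d}},m\rangle\log f_{\mathfrak{d}}\cdot z^m$ pairs $\log f_{\mathfrak{d}}$ with the normal direction, so tracking this linear pairing through the BCH expansion and comparing with the wedge-product vertex multiplicities is the delicate point; it is handled inductively using Proposition 5.9 of \cite{Gra22} applied to the simpler subdiagrams, together with the identification of bounded legs with the $(-1)^{w+1}/w^2$ contributions coming from the logarithms of slab functions.
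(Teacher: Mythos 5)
The paper itself offers no proof of this statement: it is imported verbatim from \cite{Gra22}, Proposition 5.9, so any comparison is really with the argument there (which in turn rests on \cite{GPS}, Theorem 2.8, and \cite{CPS}, \S 6). Your overall strategy --- base case for slabs, inductive step identifying scattering at a vertex with the grafting of tropical sub-disks --- is the right one, and your base case is correct: the unique disk in $\mathfrak{H}^\circ_{\mathfrak{d},w}(B)$ over a slab is a single bounded leg of weight $w$, and $w\cdot m_L=(-1)^{w+1}/w$ matches the coefficients of $\log(1+z^{m_{\mathfrak{d}}})$.

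However, the inductive step has a genuine gap. The identity you invoke, $[\log\theta_{\mathfrak{d}_1},\log\theta_{\mathfrak{d}_2}]=\langle n_1,m_2\rangle\log\theta_{\mathfrak{d}_{12}}$, is false except to leading order in $t$: the commutator of two generic wall-crossing automorphisms is an infinite product of wall-crossings (this is precisely why the scattering procedure must be iterated over all orders $k$, and why dense regions of rays appear, e.g.\ in Figure \ref{fig:CPS}). One cannot reduce a vertex, even a simple one, to a single BCH commutator; the correct mechanism is the order-by-order argument of \cite{GPS}: perturb the incoming rays into general position so that all vertices of the perturbed diagram are simple, compute the new rays at each simple vertex only up to the current $t$-order, and record the history of each resulting ray in a factorization tree, which is then identified with a tropical disk whose vertex multiplicities $\lvert u_1\wedge u_2\rvert$ arise from the exponents $\langle n_i,m\rangle$ in $\theta_{\mathfrak{d}_i}$. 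Relatedly, your induction should be organized by $t$-order (consistency of $\mathscr{S}_k$) rather than by ``order of appearance of the ray'': a single ray of $\mathscr{S}_\infty$ receives contributions to its function at every order, and, as the paper notes, the $t$-order of a ray can increase as it propagates, so the function attached to $\mathfrak{d}$ is not determined at any finite stage of your induction as you have set it up. With the inductive step replaced by the perturbation/factorization argument (extended to affine manifolds with singularities so that bounded legs ending at singular points carry the $(-1)^{w+1}/w^2$ factors), the proof closes; as written, the key identity would fail beyond first order.
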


\begin{definition}
Let $\mathfrak{H}^\circ_{q}(B)_P$ be the set of tropical disks $h^\circ : \Gamma \rightarrow B$ with one unbounded leg $E_{\text{out}}$ and such that $h^\circ(V_\infty)=P$ and $u_{(V_{\text{out}},E_{\text{out}})}=qm_{\text{out}}$ and $u_{(V_\infty,E_\infty)}=-pm_{\textup{out}}$.
\end{definition}

\begin{lemma}
\label{lem:parallel}
Let $\mathfrak{b}$ be a broken line in $\mathfrak{B}_q^{(k)}(B)_P$. If $P$ lies in an unbounded chamber of $\mathscr{S}_k$, then $\bar{m}_{\mathfrak{b}}$ is parallel to $m_{\textup{out}}$.
\end{lemma}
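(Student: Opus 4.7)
The broken line's spatial direction evolves by $\bar m_{i+1} = \bar m_i + \ell_i m_{\mathfrak{d}_i}$ for some non-negative integer $\ell_i$ at each wall crossing, since $f_{\mathfrak{d}_i} \in \mathbb{C}[z^{m_{\mathfrak{d}_i}}]\llbracket t \rrbracket$ expands only in non-negative powers of $z^{m_{\mathfrak{d}_i}}$. Telescoping, $\bar{m}_{\mathfrak{b}} = q m_{\textup{out}} + \sum_{i=1}^{r-1} \ell_i m_{\mathfrak{d}_i}$, so it suffices to show that, whenever $P$ lies in an unbounded chamber, every wall $\mathfrak{d}_i$ crossed by $\mathfrak{b}$ has direction parallel to $m_{\textup{out}}$.

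The first ingredient is a structural fact: any ray in $\mathscr{S}_k$ that extends to infinity in $B$ has direction parallel to $m_{\textup{out}}$. By Construction \ref{dicConstruction}, the affine structure on $B$ makes all unbounded edges of $\mathcal{P}$ parallel to $m_{\textup{out}}$, so $m_{\textup{out}}$ is the unique asymptotic direction of $B$. The initial slabs in $\mathscr{S}_0(B)$ are supported on the bounded interior edges of $\mathcal{P}$, hence are themselves bounded. New rays added inductively by the scattering procedure at a joint extend to infinity only in a direction compatible with the asymptotic affine structure, forcing parallelism with $m_{\textup{out}}$.

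Now take $P$ in an unbounded chamber $\mathfrak{u}$. For $P$ sufficiently far in the $+m_{\textup{out}}$ asymptotic region of $\mathfrak{u}$, the broken line $\mathfrak{b}$ lies entirely in the asymptotic region, where by the structural fact only $m_{\textup{out}}$-parallel rays occur; hence each $\ell_i m_{\mathfrak{d}_i}$ is parallel to $m_{\textup{out}}$, giving $\bar{m}_{\mathfrak{b}} \parallel m_{\textup{out}}$. For general $P \in \mathfrak{u}$, I will use continuous deformation: as $P$ varies within the connected chamber $\mathfrak{u}$, any broken line deforms continuously while its combinatorial type (the ordered sequence of walls crossed together with the monomial contribution picked up at each) is invariant. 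Since $\bar{m}_{\mathfrak{b}}$ depends only on this combinatorial data and on the fixed initial direction $q m_{\textup{out}}$, it is preserved under such deformation, yielding the desired parallelism for all $P \in \mathfrak{u}$.

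The main technical obstacle lies in the deformation step: one must verify that every broken line at a generic $P \in \mathfrak{u}$ extends to one at an asymptotically-far $P' \in \mathfrak{u}$ with the same combinatorial type. This reduces to showing that the parameter space of broken lines of fixed type, regarded as a subset of $\mathfrak{u}$, is connected and reaches arbitrarily far in the $+m_{\textup{out}}$ direction. I expect this to follow from the connectedness of $\mathfrak{u}$ together with the openness of each combinatorial stratum, but handling possible coalescences of consecutive wall crossings as $P$ moves through $\mathfrak{u}$ --- and in particular ensuring that a broken line at $P$ does not simply fail to deform to the asymptotic region --- will require careful case analysis.
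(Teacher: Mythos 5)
There is a genuine gap, and it is fatal to the whole strategy rather than a fixable technicality. Your telescoping identity $\bar m_{\mathfrak{b}} = q\,m_{\textup{out}} + \sum_i \ell_i \bar m_{\mathfrak{d}_i}$ with $\ell_i\geq 0$ is correct, but the statement you reduce to --- that every wall crossed by $\mathfrak{b}$ is parallel to $m_{\textup{out}}$ --- is false for exactly the broken lines the lemma is about, and so is the claim in your third paragraph that for $P$ far out in $\mathfrak{u}$ the broken line stays in the asymptotic region. A broken line enters from infinity travelling in direction $-q\,m_{\textup{out}}$; a segment travelling parallel to $m_{\textup{out}}$ cannot transversally meet a wall whose support is parallel to $m_{\textup{out}}$, so a broken line that only ever meets such walls never bends and contributes only the term $y^q$. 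If your two claims held, you would conclude $\vartheta_q(X,D)_\infty=y^q$ with no corrections, contradicting Corollary \ref{cor:theta} (and visibly contradicting the red broken lines in Figures \ref{fig:intro} and \ref{fig:F3broken}, which descend into the bounded part of the scattering diagram, bend at walls transverse to $m_{\textup{out}}$ there, and only then return to the unbounded chamber). The terms $y^{-p}$ of $\vartheta_q(X,D)_\infty$ require $\sum_i\ell_i\bar m_{\mathfrak{d}_i}=-(p+q)m_{\textup{out}}$, which forces bends at non-parallel walls. Your closing worry about coalescences in the deformation step is therefore beside the point; the deformation-invariance input (CPS, Lemma 4.7) is legitimate, but you are using it to transport a conclusion that is already wrong at the asymptotic basepoint.

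The paper itself only cites \cite{GRZ}, Proposition 3.5, and \cite{Gra2}, Proposition 2.3, and the argument there is of a different shape: one does \emph{not} control all walls met by $\mathfrak{b}$, only the last few. After moving $P$ far out in the $m_{\textup{out}}$ direction (using independence of the theta function of $P$ within a chamber), one uses that beyond a compact set every unbounded chamber is a strip of bounded width between two walls parallel to $m_{\textup{out}}$. If $\bar m_{\mathfrak{b}}$ had a nonzero component transverse to $m_{\textup{out}}$, integrality bounds the length of the final segment inside the strip, so the last bending point $\mathfrak{b}(t_{r-1})$ would also lie far out, hence on a wall parallel to $m_{\textup{out}}$; crossing such a wall does not change the transverse component, so the previous segment has the same nonzero transverse component and lies in an adjacent far-out strip. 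Iterating backwards, every segment would have nonzero transverse component, contradicting the initial direction $q\,m_{\textup{out}}$. If you want a self-contained proof, this backward induction from the endpoint is the argument to reconstruct; your first paragraph's wall-crossing formula is the right ingredient, but it must be applied only to the (forced to be parallel) walls met in the asymptotic region, not to all walls met by $\mathfrak{b}$.
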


\begin{proof}
This is \cite{GRZ}, Proposition 3.5, or \cite{Gra2}, Proposition 2.3.
\end{proof}

\subsubsection{Disk completion} %

\begin{proposition}
\label{prop:complete}
There is a surjective map with finite preimages
\[ \mu : \mathfrak{H}^\circ_q(B)_P \rightarrow \mathfrak{B}_q(B)_P \]
defined by taking the path from $V_\infty$ to $V_{\text{out}}$. We say a tropical disk in $\mu^{-1}(\mathfrak{b})$ is obtained from $\mathfrak{b}$ by disk completion. We have
\[ a_{\mathfrak{b}} = \sum_{h^\circ\in\mu^{-1}(\mathfrak{b})} \textup{Mult}(h^\circ). \]
\end{proposition}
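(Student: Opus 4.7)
The plan is to construct $\mu$ explicitly and then reverse-engineer tropical disks from broken lines using Proposition \ref{prop:scattering}, with the multiplicity identity reducing to a combinatorial book-keeping argument.

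First I would define $\mu$ as follows. Since $\Gamma$ is a tree (all our tropical disks are genus $0$), there is a unique path from $V_\infty$ through compact edges to $V_{\textup{out}}$, passing through some sequence of vertices $V_\infty = V_r, V_{r-1}, \ldots, V_1, V_{\textup{out}}$. I would send $h^\circ$ to the broken line $\mathfrak{b}$ whose image is the concatenation of the $h^\circ$-images of these edges, followed by the unbounded leg $E_{\textup{out}}$, reparametrised so that $\mathfrak{b}(t_i) = h^\circ(V_{r-i})$ and $\mathfrak{b}(0) = h^\circ(V_\infty) = P$. On the segment $\mathfrak{b}((t_{i-1},t_i))$ I attach the monomial $a_i z^{m_i}$ whose exponent is dictated by the flag weight $u_{(V_{r-i},E)}$ (where $E$ is the edge on the path toward $V_{\textup{out}}$) and whose $t$-order is $\varphi_{\mathfrak{b}(t)}(m_i) - h$ as in Definition \ref{defi:broken}. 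The coefficient $a_i$ is the product, over the vertices $V_{r-i+1}, \ldots, V_r$ already passed, of the local multiplicities $m_V$ computed in Definition \ref{defi:mult}.

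Next I would verify that $\mu(h^\circ)$ is a genuine broken line. The affine and direction conditions follow from (i) of Definition \ref{defi:tropcurve} and the initial weight $-p\, m_{\textup{out}}$ at $V_\infty$. At each intermediate vertex $V_{r-i}$, the balancing condition applied after removing the two on-path edges forces the sum of weights of the subtree $\Gamma_i$ hanging off at $V_{r-i}$ to equal $m_{i+1} - m_i$. Because $\Gamma_i$ is itself a tropical disk with base point $h^\circ(V_{r-i})$ and outgoing direction a positive multiple of $m_{\mathfrak{d}_i}$ for some ray $\mathfrak{d}_i \in \mathscr{S}_\infty(B)$, Proposition \ref{prop:scattering} identifies the aggregated contribution of all admissible $\Gamma_i$'s precisely with the expansion of $f_{\mathfrak{d}_i}^{\langle n_i,m_i\rangle}$. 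Therefore $V_{r-i}$ lies on a ray $\mathfrak{d}_i$ and $a_{i+1}z^{m_{i+1}}$ is a term in $\theta_{\mathfrak{d}_i}(a_i z^{m_i})$, as required by Definition \ref{defi:broken}.

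Surjectivity and finite fibres will then follow by reversing this construction: given $\mathfrak{b}$, at each break $\mathfrak{b}(t_i)$ the transition is a single specified monomial in the Laurent expansion of $f_{\mathfrak{d}_i}^{\langle n_i,m_i\rangle}$, and Proposition \ref{prop:scattering} exhibits each such monomial as a sum over disk-tree attachments; there are only finitely many such attachments modulo the fixed $t$-order because $f_{\mathfrak{d}_i} \equiv 1 \pmod{t}$ implies each contributing disk has strictly positive $t$-weight, bounding the number of children at each break.

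The main obstacle will be the multiplicity identity $a_{\mathfrak{b}} = \sum_{h^\circ \in \mu^{-1}(\mathfrak{b})} \textup{Mult}(h^\circ)$. The contribution of $\mathfrak{b}$ collects a product of coefficients of $z^{m_{i+1}-m_i}$ in $f_{\mathfrak{d}_i}^{\langle n_i,m_i\rangle}$; writing $f_{\mathfrak{d}_i} = \exp\bigl(\sum_w w\,\textup{Mult}(h^\circ)\, t^w z^{w m_{\mathfrak{d}_i}}\bigr)$ via Proposition \ref{prop:scattering} and expanding the exponential produces a sum over unordered multisets of sub-disks with weights $\prod \textup{Mult}(h^\circ_\alpha)$ divided by symmetry factors. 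The hard part is to match these symmetry factors, together with the factor $\langle n_i,m_i\rangle$ appearing in the exponent, with (a) the factor $|\textup{Aut}(h^\circ)|^{-1}$ for the assembled tropical disk, (b) the local multiplicity $m_{V_{r-i}}$ at the now-possibly higher-valent break vertex, resolved by a choice of trivalent deformation as in Definition \ref{defi:mult}, and (c) the bounded-leg contributions $m_L = (-1)^{w_L+1}/w_L^2$. I expect to do this by induction on the depth of the disk, following the bookkeeping of \cite{Gra22}, \S 5, where the analogous statement is proven for broken lines ending in the central chamber; the only new input is that the base point $V_\infty$, with convention $m_{V_\infty} = 1$, plays the role previously occupied by a wall endpoint, so no new multiplicities are introduced at the root and the induction closes.
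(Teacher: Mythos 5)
The paper does not prove this proposition itself; it cites \cite{CPS} (Lemma 6.4, Proposition 6.15) and \cite{Gra2} (Propositions 3.2 and 4.18), and your sketch reconstructs essentially the argument of those references: path extraction gives $\mu$, Proposition \ref{prop:scattering} identifies the subtrees hanging off the path with terms of the wall functions, and the multiplicity identity reduces to matching the $1/k!$ from expanding $\exp(\log f_{\mathfrak{d}})$, the factor $w$ in $\log f_{\mathfrak{d}}$, and the exponent $\langle n_i,m_i\rangle$ against $|\mathrm{Aut}(h^\circ)|^{-1}$ and the trivalently resolved vertex multiplicities. One point to correct: in Definition \ref{defi:broken} the coefficient $a_{i+1}$ is the \emph{full} coefficient of the chosen monomial $z^{m_{i+1}}$ in $\theta_{\mathfrak{d}}(a_iz^{m_i})$, so the broken line is already determined by its image and the sequence of directions $m_i$; your alternative prescription of $a_i$ as the partial product of the $m_V$ of the particular disk is not the broken-line coefficient (a single disk generally contributes only one summand of it), and defining $\mu$ that way would not land in $\mathfrak{B}_q(B)_P$. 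Define $\mu(h^\circ)$ as the unique broken line with that image and those directions, and then the identity $a_{\mathfrak{b}}=\sum_{h^\circ\in\mu^{-1}(\mathfrak{b})}\mathrm{Mult}(h^\circ)$ is exactly the statement that the full coefficient decomposes as the sum over fibre elements; also note the bounded-leg factors $m_L$ are already absorbed into $\mathrm{Mult}$ of the sub-disks via Proposition \ref{prop:scattering}, so they need no separate matching. The symmetry-factor bookkeeping you defer to the induction of \cite{Gra22} is the entire technical content, but deferring it is consistent with what the paper itself does.
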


\begin{proof}
This is \cite{CPS}, Lemma 6.4 and Proposition 6.15, with some more details given in \cite{Gra2}, Proposition 3.2 and Proposition 4.18.
\end{proof}

\subsubsection{Leg extension} %

If $h^\circ$ is a tropical disk with $u_{(V_\infty,E_\infty)}=-wm_{\textup{out}}$, then we can forget $V_\infty$ and extend $E_\infty$ to an unbounded leg $E_{\text{out}}$ with $u_{(V_{\text{out}},E_{\text{out}})}=wm_{\text{out}}$. Hence, the above statements for tropical disks can be translated to tropical curves as follows.

\begin{definition}
Let $\mathfrak{H}_w(B)$ be the set of tropical curves on $B$ with one unbounded leg $E_{\text{out}}$ of weight $w$. This implies $u_{(V_{\text{out}},E_{\text{out}})}=wm_{\text{out}}$. Further, let $\mathfrak{H}_{p,q}(B)_P$ be the set of tropical curves on $B$ having two unbounded legs, of weights $p$ and $q$, and such that the image of the unbounded leg of weight $p$ contains $P$.
\end{definition}

\begin{corollary}
We have
\[ \textup{log }\prod_{\mathfrak{d} : m_{\mathfrak{d}} = wm_{\text{out}}} f_{\mathfrak{d}} = \sum_{w=1}^\infty\sum_{h\in\mathfrak{H}_w(B)} w \text{Mult}(h) z^{(wm_{\mathfrak{d}},0)}. \]
\end{corollary}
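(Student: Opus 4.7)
The plan is to derive this identity directly from Proposition \ref{prop:scattering} by taking logarithms and then reinterpreting the resulting tropical disks as tropical curves via the leg extension operation described immediately above the corollary. First, using $\log \prod f_\mathfrak{d} = \sum \log f_\mathfrak{d}$, I would rewrite the left-hand side as
\[ \sum_{\mathfrak{d}:\, m_\mathfrak{d}\in\mathbb{Z}_{>0}\,m_{\text{out}}} \log f_\mathfrak{d}, \]
and apply Proposition \ref{prop:scattering} to each summand. This expresses the left-hand side as a sum over pairs $(\mathfrak{d}, h^\circ)$ with $h^\circ \in \mathfrak{H}^\circ_{\mathfrak{d}, v}(B)$ for some $v \geq 1$, contributing $v\,\textup{Mult}(h^\circ)\,t^v z^{v m_\mathfrak{d}}$. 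The monomial $t^v z^{v m_\mathfrak{d}}$ is then rewritten as $z^{(v m_\mathfrak{d},0)}$ using the identification $z^{(m,h)} = t^{\varphi(m)-h}z^m$, together with the slope-$1$ condition on $\varphi$ along $m_{\text{out}}$ coming from Construction \ref{dicConstruction}.

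Next, I would establish a multiplicity-preserving bijection between the disjoint union $\bigsqcup_{\mathfrak{d},v} \mathfrak{H}^\circ_{\mathfrak{d},v}(B)$ and $\bigsqcup_{w \geq 1} \mathfrak{H}_w(B)$ via leg extension. Given $h^\circ \in \mathfrak{H}^\circ_{\mathfrak{d},v}(B)$ with univalent vertex $V_\infty$ at $P \in \textup{Int}(\mathfrak{d})$ and $u_{(V_\infty, E_\infty)} = -v m_\mathfrak{d}$, I forget $V_\infty$ and promote $E_\infty$ to an unbounded leg $E_{\text{out}}$ with $u_{(V_{\text{out}}, E_{\text{out}})} = v m_\mathfrak{d} \in \mathbb{Z}_{>0}\, m_{\text{out}}$, obtaining a tropical curve $h$ with a single unbounded leg of weight $w$ parallel to $m_{\text{out}}$. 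Since $m_{V_\infty} = 1$ by Definition \ref{defi:mult}, we have $\textup{Mult}(h) = \textup{Mult}(h^\circ)$. For the inverse map, given $h \in \mathfrak{H}_w(B)$, Lemma \ref{lem:parallel} combined with the consistency of $\mathscr{S}_\infty(B)$ ensures that the unbounded leg $E_{\text{out}}$ is supported on a unique ray $\mathfrak{d}$ with $m_\mathfrak{d}$ a positive multiple of $m_{\text{out}}$; truncating $E_{\text{out}}$ at any interior point of $\mathfrak{d}$ recovers the corresponding disk, and $P$-independence holds because varying $P$ along $\mathfrak{d}$ does not change the underlying combinatorial type.

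The main obstacle I anticipate is the weight bookkeeping, specifically matching the factor $v$ coming from Proposition \ref{prop:scattering} with the factor $w$ on the right-hand side. When $m_\mathfrak{d} = k\, m_{\text{out}}$ with $k > 1$ and the disk weight is $v$, leg extension yields a curve with unbounded leg weight $w = vk$, so one must check that the extra combinatorial factor matches up correctly after summing over all rays $\mathfrak{d}$ that can support a given unbounded leg. A secondary subtlety is to verify that every tropical curve in $\mathfrak{H}_w(B)$ really does arise from some ray of $\mathscr{S}_\infty(B)$, as opposed to merely lying in the asymptotic region; this relies on the inductive construction of $\mathscr{S}_\infty(B)$, together with Proposition \ref{prop:scattering} itself applied inductively to lower-order rays. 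Once this accounting is pinned down, the identity follows termwise from $\log$-additivity.
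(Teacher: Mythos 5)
Your proposal is correct and is exactly the derivation the paper intends: the corollary is stated without proof as an immediate consequence of Proposition \ref{prop:scattering} combined with the leg-extension operation described in the preceding paragraph, which is precisely your argument. The weight bookkeeping you worry about is harmless under the standard convention that ray directions $m_{\mathfrak{d}}$ are primitive (so every ray parallel to $m_{\textup{out}}$ has direction exactly $m_{\textup{out}}$ and the factor $w$ from Proposition \ref{prop:scattering} matches the leg weight directly), and your observation that the corollary's left-hand side should be read as a product over all rays parallel to $m_{\textup{out}}$ correctly diagnoses a notational slip in the paper's statement.
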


\begin{corollary}
\label{cor:bh}
Let $P \in B$ be a point in an unbounded chamber of $\mathscr{S}_{p+q}(B)$. Then there is a surjective map with finite preimages
\[ \mu : \mathfrak{H}_{p,q}(B) \rightarrow \mathfrak{B}_{p,q}(B)_P, \]
such that
\[ a_{\mathfrak{b}} = \sum_{h\in\mu^{-1}(\mathfrak{b})} \textup{Mult}(h). \]
A preimage is given by disk completion and leg extension.
\end{corollary}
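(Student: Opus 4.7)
The plan is to reduce Corollary \ref{cor:bh} to Proposition \ref{prop:complete} by factoring $\mu$ through a truncation that converts tropical curves in $\mathfrak{H}_{p,q}(B)$ into tropical disks in $\mathfrak{H}^\circ_q(B)_P$ whose distinguished vertex satisfies $u_{(V_\infty,E_\infty)} = -pm_{\text{out}}$. The leg extension operation described just before the corollary gives the inverse of truncation on this subclass of tropical disks, so surjectivity and fiber control will transfer from the already-established statement for $\mu$ on tropical disks.

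Concretely, given $h \in \mathfrak{H}_{p,q}(B)$, the defining condition that $P$ lies on the weight-$p$ unbounded leg $E_p$ supplies a unique preimage $\tilde P \in E_p$. I would insert a univalent vertex $V_\infty$ at $\tilde P$, discard the unbounded portion of $E_p$ beyond $V_\infty$, and declare the remaining half-edge to be $E_\infty$. Since $E_p$ has outward direction $pm_{\text{out}}$, the truncation produces $u_{(V_\infty,E_\infty)} = -pm_{\text{out}}$, so the result is a tropical disk in $\mathfrak{H}^\circ_q(B)_P$. Composing with the disk completion map of Proposition \ref{prop:complete} yields a broken line in $\mathfrak{B}_q(B)_P$ whose final direction is $-pm_{\text{out}}$ by construction (consistent with Lemma \ref{lem:parallel}, since $P$ is in an unbounded chamber), hence lies in $\mathfrak{B}_{p,q}(B)_P$. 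I take $\mu(h)$ to be this broken line.

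For surjectivity and finite fibers, given $\mathfrak{b} \in \mathfrak{B}_{p,q}(B)_P$, Proposition \ref{prop:complete} produces a non-empty finite preimage inside $\mathfrak{H}^\circ_q(B)_P$, whose elements automatically satisfy $u_{(V_\infty,E_\infty)} = -pm_{\text{out}}$ because this matches the final direction of $\mathfrak{b}$. Applying leg extension---forget $V_\infty$ and prolong $E_\infty$ past $V_\infty$ to an unbounded leg of weight $p$ in direction $pm_{\text{out}}$---returns an element of $\mathfrak{H}_{p,q}(B)$ whose weight-$p$ leg passes through $P$. Since $m_{V_\infty} = 1$ in Definition \ref{defi:mult}, truncation and extension preserve $\text{Mult}$, so the multiplicity identity $a_{\mathfrak{b}} = \sum_{h \in \mu^{-1}(\mathfrak{b})}\text{Mult}(h)$ transfers directly from Proposition \ref{prop:complete}.

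The main obstacle, though one of bookkeeping rather than substance, is verifying that truncation and leg extension are genuine mutual inverses between $\mathfrak{H}_{p,q}(B)$ and the subset of $\mathfrak{H}^\circ_q(B)_P$ cut out by $u_{(V_\infty,E_\infty)} = -pm_{\text{out}}$. This relies crucially on the unbounded chamber hypothesis via Lemma \ref{lem:parallel}: only then does the final direction of a broken line in $\mathfrak{B}_q(B)_P$ necessarily lie along $m_{\text{out}}$, so that the leg extension step of the surjectivity argument actually produces a legal unbounded leg with a prescribed direction at infinity.
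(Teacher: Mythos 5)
Your proposal is correct and follows essentially the same route the paper intends: the corollary is obtained by composing the disk-completion correspondence of Proposition \ref{prop:complete} with the leg-extension/truncation bijection described just before the statement, using Lemma \ref{lem:parallel} to guarantee that in an unbounded chamber the ending direction is a multiple of $m_{\textup{out}}$ so the extension produces a genuine unbounded leg, and noting $m_{V_\infty}=1$ so multiplicities are unchanged. The paper gives no further argument beyond ``disk completion and leg extension,'' so your write-up simply makes the same reduction explicit.
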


\subsection{Tropical curve classes} 
\label{S:class}

The curve class of a tropical curve is defined by its intersection with tropical cocycles defined by \emph{elementary tropical curves}. For more details see \cite{Gra2}, \S3, and \cite{GRZ}, \S4.2.

\begin{example}
Figure \ref{fig:F3corr} shows some tropical curves on $\mathbb{F}_3$ of class $2F+E$:
\end{example}

\begin{definition}
Write $\mathfrak{H}_w(B,\beta)$ and $\mathfrak{H}_{p,q}(B,\beta)_P$ for the set of tropical curves of class $\beta$ in $\mathfrak{H}_w(B)$ and $\mathfrak{H}_{p,q}(B)_P$, respectively.
\end{definition}

\begin{figure}[h!]
\centering
\begin{tikzpicture}[scale=1.5,rotate=90]
\clip (-2,-1) rectangle (3,3);
\draw[dashed] (0.0, 0.5) -- (-1.0, 1.0);
\draw[dashed] (0.5, 1.5) -- (-1.0, 1.0);
\draw[dashed] (0.5, 1.5) -- (0.0, 2.0);
\draw[dashed] (0.5, 2.5) -- (0.0, 2.0);
\draw[dashed] (0.5, 2.5) -- (-1.0, 3.0);
\draw[dashed] (-3.0, 3.5) -- (-1.0, 3.0);
\draw[dashed] (0.0, 0.5) -- (-1.0, 0.0);
\draw[dashed] (-1.0, -0.5) -- (-1.0, 0.0);
\draw[dashed] (-1.0, -0.5) -- (-3.0, -1.0);
\draw[dashed] (-5.5, -1.5) -- (-3.0, -1.0);
\draw[->] (-1.0, -0.5) -- (-3.0, -1.5);
\draw[-] (0.5, 1.5) -- (-0.333, 0.667);
\draw[->] (-0.333, 0.333) -- (-1.25, -1.5);
\draw[-] (0.0, 1.0) -- (-1, 1.0);
\draw[->] (0.5, 2.5) -- (-0.5, 3.5);
\draw[->] (0.0, 0.5) -- (0.0, -1.5);
\draw[-] (0.0, 0.5) -- (0.0, 1.333);
\draw[->] (0.333, 1.667) -- (1.25, 3.5);
\draw[->] (0.5, 2.5) -- (3.0, 0.0);
\draw[->] (-0.667, -0.333) -- (0.5, -1.5);
\draw[->] (2.0, 1.0) -- (3.0, 1.0);
\draw[->] (0.0, 0.0) -- (3.0, 0.0);
\draw[->] (0.0, -0.714) -- (3.0, -0.714);
\draw[->] (1.714, 2.714) -- (3.0, 2.714);
\draw[->] (1.0, 2.0) -- (3.0, 2.0);
\draw[->] (-2.0, -1.0) -- (3.0, -1.0);
\draw[->] (-0.0, 3.0) -- (3.0, 3.0);
\draw[->] (0.5, 1.5) -- (2.5, 3.5);
\draw[->] (-1.0, -0.5) -- (3.0, 1.5);
\draw[->] (0.667, 2.333) -- (3.0, 3.5);
\draw[->] (-3.0, 3.5) -- (3.0, 2.5);
\draw[->] (-0.923, -0.846) -- (3.0, -1.5);
\draw[->] (-5.5, -1.5) -- (3.0, -0.286);
\draw[->] (0.923, 2.846) -- (3.0, 3.143);
\fill[blue] (2.5,.8) circle (1pt);
\draw[blue] (3,.8) node[above]{$2F+E$} -- (2,.8) node[right]{$2$} -- (1.6,.8) -- (1.6,1.4) -- (3,1.4);
\draw[blue] (1.6,.8) -- (-1,-.5);
\draw[blue] (1.6,1.4) -- (.5,2.5);
\draw[red] (0,.5) -- (0,4/3);
\draw[red] (1/3,5/3) -- (2/3,7/3) -- (.5,2.5) node[above]{$2$};
\draw[red] (2/3,7/3) -- (2,7/3) node[right]{$3$} -- (3,7/3) node[above]{$2F+E$};
\end{tikzpicture}
\caption{Some tropical curves for $\mathbb{F}_3$ of class $2F+E$.}
\label{fig:F3corr}
\end{figure}

\part{Results} 

\section{Tropical correspondence} 

\subsection{Idea of proof} 
\label{S:idea}

The geometric idea behind tropical correspondence is as follows. 

Consider a surface $X$ or a log Calabi-Yau pair $(X,D)$ with some conditions on the genus $g$, curve class $\beta$, fixed point and tangency conditions such that the (virtual) number of curves satisfying these conditions is finite. Each condition translates to a condition for tropical curves on the dual intersection complex $B$. The genus of a tropical curve is $g=g_\Gamma+\sum_V g_V$, where $g_\Gamma$ is the genus (first Betti number) of the underlying graph. We only consider the case $g=0$, so that $\Gamma$ is a tree and all $g_V=0$. The curve class $\beta$ of a tropical curve can be read off from the intersection with tropical cocycles, as described in \S\ref{S:class}. Point conditions on $X$ simply translate to point conditions on $B$. Tangency conditions of intersection multiplicity $w$ with $D$ correspond to outgoing legs with weight $u_{(V,E)}=wm_{\text{out}}$. Psi class conditions $\psi^k$ correspond to vertices with overvalency $k$, see \S\ref{S:tropdesc}. We will not discuss lambda class conditions, which are used for higher genus statements, but they correspond to a $q$-refinement of tropical multiplicites, see \cite{Bou}.

Given conditions as above, such that the (virtual) count is finite, the set of tropical curves satisfying these conditions is finite as well. The union of all such tropical curves induces a refinement of the dual intersection complex by considering each edge of a tropical curve as a new edge in the polyhedral complex. The refinement corresponds to a logarithmic modification of the Artin fan of $X$, see \cite{AW}, \S2.6. This can be seen as a generalization of toric blow ups corresponding to subdivisions of the fan. The logarithmic modification in turn gives another toric degeneration $\mathcal{Y}$ of $X$, whose central fiber $Y_0$ has more components, one for each vertex of a tropical curve satisfying the given conditions. The refinement is chosen in such a way that curves on the reducible variety $Y_0$ that are degenerations of curves on the general fiber $X$ and meet the given conditions are torically transverse, which means that they meet toric divisors in $Y_0$ only if two components come together and are disjoint from toric strata of higher codimension (torus fixed points).

Now consider a particular tropical curve satisfying the given conditions, together with an orientation of its edges. A vertex $V$ of the tropical curve maps to a vertex of the refined polyhedral complex, corresponding to a curve component $C_V$ mapping to a component $Y_V$ of $Y_0$. The curve $C_V$ on the toric variety $Y_V$ is subject to the following conditions. The genus is given by $g_V=0$. The curve class $\beta_V$ is defined by the condition that its intersection with a toric divisor of $Y_V$ equals the weight of the corresponding edge. If an edge $E$ attached to $V$ is oriented such that it points towards $V$, this induces a fixed point condition on the corresponding toric divisor. In this way we ensure that $C_V$ meets the component corresponding to the other vertex of $E$, such that they can be glued together to give a connected curve on $Y_0$. There are only finitely many (usually just one) orientation of the tropical curve such that on each component $Y_V$ the (virtual) number of curves satisfying the conditions (together with the point conditions coming from the orientation) is finite. We sum over all such orientations, i.e., over all possible gluings of components $C_V$. In this way we reduce the problem of counting curves on $X$ to computing curves on the toric varieties $Y_V$ with prescribed conditions.

On each of the toric varieties $Y_V$ we have conditions as above, a genus $g_V$, a curve class $\beta_V$ described by the weighted edges attached to $V$, point conditions coming from the orientation and possibly psi class conditions corresponding to higher valency. Hence, we can repeat the above procedure. The point conditions coming from the orientation are sufficiently general, such that the tropical curves meeting these conditions will be trivalent. Again, each vertex of a tropical curve induces conditions on the corresponding toric variety. But since the vertex is trivalent, the curve class will be very simple, and the (virtual) number of curves meeting the conditions is easily seen to match the multiplicity of the vertex. Taking the product over all vertices, we see that the multiplicity of a tropical curve matches the (virtual) number of curves on $Y_0$ that satisfy the given conditions on the components $Y_V$. Summing over all tropical curves, we see that the (virtual) number of curves matching the prescribed conditions equals the number, counted with multiplicity, of tropical curves satisfying the corresponding conditions.

There is one subtlely here. To make the above arguments rigorous, we have to apply the degeneration formula for logarithmic Gromov-Witten invariants \cite{KLR}. But to do this, we have to work with a log smooth degeneration. This is obtained by successively blowing up irreducible components on the central fiber $Y_0$ of the degeneration $\mathcal{Y}$. A component $Y_V$ has a number of log singularities that equals the number of affine singularities on the edges attached to $V$. For each such log singularity, the blow up along $Y_V$ produces an exceptional $\mathbb{P}^1$ on the central fiber. The multiplicities $m_L$ of bounded legs ending in affine singularities account for components mapping onto or intersecting the exceptional locus of this log resolution.

\subsection{Non-nef curve classes} 
\label{S:non-nef}

Note that tropical curves have edges of positive weight, by which we mean that the weight vectors are $u_{(V,E)}=w_Em_{(V,E)}$ for some $w_E> 0$ and $m_{(V,E)}$ the primitive direction vector pointing from $V$ towards $E$. If $E$ connects $V$ to $V'$, then $m_{(V',E)}=-m_{(V,E)}$ and $u_{(V',E)}=-u_{(V,E)}$, so $u_{(V',E)}w_Em_{(V',E)}$ with the same weight $w_E> 0$. In particular, the unbounded legs have positive weight. This means that the corresponding curves on $X$ have positive intersection multiplicities with the divisor. In particular $\beta\cdot D\geq 0$ (here we have $\geq$ because there need not be any unbounded legs). 

If $D=D_1\cup\ldots\cup D_r$ is reducible, then $\beta\cdot D_i \geq 0 \ \forall i$. If $D=\partial X$ is the toric boundary then $\beta$ must be nef, that is, $\beta\cdot D'\geq 0$ for any divisor $D'$, since the components of $\partial X$ generate the divisor class group of $X$. If $D$ is a smooth anticanonical divisor, then the condition $\beta\cdot D\geq 0$ is weaker than $\beta$ being nef, hence tropical correspondence applies to more curve classes in this case. For example, with smooth divisor $D$ we can compute the number $R_{0,(1)}((\mathbb{F}_3,D),F+E)=R_{0,(1)}((\mathbb{F}_1,D),E)=1$.

\subsection{Tropical correspondence for relative and $2$-marked invariants} 

The ideas of \S\ref{S:idea} have been applied explicitly in \cite{Gra22} and \cite{Gra2}. The Fano condition was not used in these arguments, and working with the dual intersection complex is possible also for non-Fano varieties, as explained in \S\ref{S:fan}. Hence, the tropical correspondence theorems of \cite{Gra22} and \cite{Gra2} carry over to this setting, giving the following.

\begin{theorem}
\label{thm:tropcorr}
Let $X$ be a (not necessarily Fano) smooth projective surface and let $D$ be a smooth anticanonical divisor. Let $B$ be the dual intersection complex of a toric degeneration of the smooth log Calabi-Yau pair $(X,D)$. Let $\beta$ be a curve class with $\beta\cdot D\geq 0$. Then
\[ R_{0,(\beta\cdot D)}((X,D),\beta) = \sum_{h\in\mathfrak{H}_{0,(\beta\cdot D)}(B,\beta)} \text{Mult}(h), \]
and
\[ R_{0,(p,q)}((X,D),\beta) = \frac{1}{p}\sum_{h\in\mathfrak{H}_{0,(p,q)}(B,\beta)_\infty} \text{Mult}(h). \]
\end{theorem}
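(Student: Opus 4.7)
The plan is to follow the same overall architecture as in \cite{Gra22} and \cite{Gra2}, while verifying that each step genuinely only uses the dual intersection complex $B$ and never the global polytope/intersection complex, so that the non-Fano case goes through unchanged. Concretely, I would start from the toric degeneration $\mathcal{X}\to\mathbb{A}^1$ constructed in \S\ref{S:toricdeg5} (which exists for any smooth log Calabi--Yau pair $(X,D)$, Fano or not, once one subdivides the non-convex spanning polytope so that the central fibre has only convex cells). Since $D$ is smooth, $B$ has exactly one unbounded direction, and the conditions imposed by $\beta$ together with either the single tangency of multiplicity $\beta\cdot D$, or the two tangencies $(p,q)$ together with the generic point $P$ in an unbounded chamber, are translated into the tropical conditions defining $\mathfrak{H}_{0,(\beta\cdot D)}(B,\beta)$ and $\mathfrak{H}_{0,(p,q)}(B,\beta)_P$ respectively (with the point on the outgoing leg of weight $p$).

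The key steps would then be, in order: (i) collect the finite set of tropical curves satisfying the conditions and use their union to refine the polyhedral decomposition of $B$; (ii) show that this refinement corresponds to a logarithmic modification of $\mathcal{X}$ as in \cite{AW} producing a new toric degeneration $\mathcal{Y}\to\mathbb{A}^1$ whose central fibre $Y_0$ has one component $Y_V$ per vertex of the combined tropical picture, and such that any stable log map with the prescribed discrete data degenerates to a torically transverse map to $Y_0$; (iii) invoke the degeneration formula of \cite{KLR} to write the Gromov--Witten invariant as a sum over orientations of the tropical curves of a product of vertex contributions; (iv) compute each vertex contribution on the toric component $Y_V$ and show, by a further application of tropical correspondence at the vertex, that it equals the vertex multiplicity $m_V$ of Definition \ref{defi:mult}; (v) identify the remaining factor $\prod_L m_L = \prod_L (-1)^{w_L+1}/w_L^2$ over bounded legs with the contribution of the log resolution along the affine singularities, exactly as in \cite{Gra22}, \S5. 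Finally the $2$-marked case requires the factor $1/p$ because the point condition $\textup{ev}^\star[\textup{pt}]$ is placed on the leg of weight $p$, and covers of that leg contribute with the standard $1/p$ from the automorphisms of the marked ramification profile.

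The main obstacle, and the only place where the Fano hypothesis might seem to enter, is step (ii): one must know that curves of class $\beta$ with $\beta\cdot D\geq 0$ but possibly $\beta$ not nef really do degenerate to torically transverse maps on $Y_0$, i.e.\ that no tropicalisation with a negative-weight edge can appear. This is exactly the content of \S\ref{S:non-nef}: since the only unbounded direction of $B$ is $m_{\textup{out}}$ and all weights of legs are positive by definition, the balancing condition together with $\beta\cdot D\geq 0$ suffices, replacing the nefness assumption that was used implicitly for a toric boundary in the Fano/semi-Fano arguments. The other place where care is needed is that in the non-Fano setting we only have the dual intersection complex, not a global intersection complex, so the degeneration formula has to be applied directly in the fan picture; but the formula of \cite{KLR} is local on the central fibre and only requires a log smooth degeneration, which is provided by the blow-ups resolving the log singularities along each $Y_V$, and this resolution is again purely local around each vertex of $B$.

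Combining the above, the proof reduces exactly to the identities already established in \cite{Gra22}, Theorem 5.4, and \cite{Gra2}, Theorem 4.2, applied vertex by vertex; the only modifications are (a) allowing $B$ to come from the non-Fano dual intersection complex of \S\ref{S:toricdeg5}, and (b) relaxing $\beta$ nef to $\beta\cdot D\geq 0$ as justified above. No new combinatorial identity is required, so I do not expect additional difficulty beyond carefully citing these two results and verifying the two modifications.
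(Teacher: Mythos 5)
Your proposal is correct and follows essentially the same route as the paper: the paper's own justification is precisely that the arguments of \cite{Gra22} and \cite{Gra2} never use the Fano hypothesis, that the dual intersection complex exists in the non-Fano case via the fan-picture construction of \S\ref{S:toricdeg5}, and that the nefness assumption is relaxed to $\beta\cdot D\geq 0$ exactly as you argue via \S\ref{S:non-nef}. Your write-up is in fact more detailed than the paper's one-paragraph citation, but the architecture (refinement, logarithmic modification, degeneration formula of \cite{KLR}, vertex multiplicities, bounded-leg contributions from the log resolution) is identical to the strategy the paper outlines in \S\ref{S:idea}.
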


\begin{corollary}
\label{cor:theta}
At infinity we have
\[ \vartheta_q(X,D)_\infty = y^q + \sum_{p\geq 1}\sum_{\beta : \beta.E=p+q} p R_{0,(p,q)}((X,D),\beta) s^\beta t^{p+1}y^{-p}. \]
\end{corollary}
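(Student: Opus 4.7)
The plan is to unfold the definition of $\vartheta_q(X,D)_\infty$ and match each broken line with a two-marked tropical curve via Corollary \ref{cor:bh}, then invoke Theorem \ref{thm:tropcorr}.

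Concretely, I would fix $P$ in an unbounded chamber of $\mathscr{S}_\infty(B)$ and expand
\[ \vartheta_q(X,D)_\infty = \sum_{\mathfrak{b} \in \mathfrak{B}_q(B)_P} a_\mathfrak{b}\, z^{m_\mathfrak{b}} \]
according to Definition \ref{def:theta}. There is exactly one broken line that is not bent by any wall, namely the straight line coming in from infinity carrying the single monomial $z^{q m_\mathrm{out}}$; it contributes $y^q$ and accounts for the first term of the formula. For every other broken line, Lemma \ref{lem:parallel} forces $\bar m_\mathfrak{b}$ to be parallel to $m_\mathrm{out}$, and a short argument based on the form of the slab automorphisms at the affine singularities (which reverse the $m_\mathrm{out}$-component of a monomial they transport) shows that the bent lines end with spatial direction $\bar m_\mathfrak{b} = -p\, m_\mathrm{out}$ for some $p \geq 1$. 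Writing out the resulting monomial in $(m,h)$-coordinates using the rule $t = z^{(0,1)}$ identifies $z^{m_\mathfrak{b}}$ with $s^\beta t^{p+1} y^{-p}$, where $\beta$ is the curve class recorded by the multi-valued piecewise linear function $\varphi$ along the path of $\mathfrak{b}$, and where balancing between the two asymptotic directions $qm_\mathrm{out}$ and $-pm_\mathrm{out}$ forces $\beta\cdot D = p+q$.

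Next I would apply Corollary \ref{cor:bh} to obtain the surjective map with finite fibres $\mu : \mathfrak{H}_{0,(p,q)}(B,\beta)_\infty \to \mathfrak{B}_{p,q}(B,\beta)_P$ together with $a_\mathfrak{b} = \sum_{h \in \mu^{-1}(\mathfrak{b})} \mathrm{Mult}(h)$. Summing over broken lines with fixed $(p,\beta)$ then yields
\[ \sum_{\mathfrak{b}} a_\mathfrak{b} = \sum_{h \in \mathfrak{H}_{0,(p,q)}(B,\beta)_\infty} \mathrm{Mult}(h), \]
and the second equation of Theorem \ref{thm:tropcorr} identifies this with $p \cdot R_{0,(p,q)}((X,D),\beta)$, which is precisely the factor of $p$ appearing in the corollary. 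Assembling these contributions over all $p \geq 1$ and all $\beta$ with $\beta\cdot D = p+q$ produces the stated expansion.

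The main obstacle, and the step I would spend the most time on, is the monomial identification $z^{m_\mathfrak{b}} = s^\beta t^{p+1} y^{-p}$, in particular pinning down that the $t$-exponent is exactly $p+1$ rather than some other natural combination of $p$, $q$ and $\beta\cdot D$. This amounts to a careful accounting of the $t$-order $\varphi_x(m) - h$ picked up by the monomial at each wall crossing: one must separate the contribution absorbed into $s^\beta$ via the multi-valued structure of $\varphi$ from the residual height that defines $y^{-p}$, and then match this against the weights $t^w$ produced by the scattering formula of Proposition \ref{prop:scattering} (and preserved under disk completion, Proposition \ref{prop:complete}, and leg extension). Once this bookkeeping is clean, every remaining step is a direct appeal to results already collected earlier in the paper.
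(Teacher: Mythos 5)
Your proposal is correct and follows the same route the paper intends: the paper states Corollary \ref{cor:theta} without an explicit proof, as a direct consequence of expanding $\vartheta_q$ over broken lines ending at a point at infinity (Definition \ref{def:theta}, Lemma \ref{lem:parallel}), converting broken-line coefficients into tropical multiplicities via Corollary \ref{cor:bh}, and then applying the second identity of Theorem \ref{thm:tropcorr} to produce the factor $p\,R_{0,(p,q)}((X,D),\beta)$. Your identification of the monomial bookkeeping (matching $z^{m_{\mathfrak{b}}}$ with $s^\beta t^{p+1}y^{-p}$) as the only point requiring care is also where the paper is implicit, so nothing essential is missing.
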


\subsection{Tropical correspondence for descendant invariants} 
\label{S:tropdesc}

Recall the descendant Gromov-Witten invariants from \S\ref{S:descGW},
\[ D_{0,1}(X,\beta) = \int_{[\mathcal{M}_{g,1}(X,\beta)]^{\text{vir}}} \psi^{\beta\cdot D-2}\textup{ev}^\star [\textup{pt}]. \]
Note that they are only defined for $\beta\cdot D\geq 2$.

\begin{definition}
\label{defi:psi}
Let $\mathfrak{H}^\psi_{0,1}(B,\beta)_P$ be the set of tropical curves of genus $g=0$ and class $\beta$ on $B$ which have a vertex $V$ of valency (= number of attached edges) $\beta\cdot D$ at a prescribed point $P$ in general position (meaning in the interior of a chamber of the scattering diagram).
\end{definition}

\begin{remark}
By \cite{MR}, Definition 2.8, the condition in Definition \ref{defi:psi} corresponds to insertion of the class $\psi^{\beta\cdot D-2}\text{ev}^\star[\text{pt}]$. In \cite{MR}, Definition 2.8, a modification of the vertex multiplicity was defined. But in our case, with a single psi class, this modification is trivial, so the multiplicity is just $m_V$ from Definition \ref{defi:mult}.
\end{remark}

\begin{theorem}
\label{thm:tropcorrdesc}
Let $X$ be a (not necessarily Fano) smooth projective surface and let $D$ be a smooth anticanonical divisor. Let $B$ be the dual intersection complex of a toric degeneration of the smooth log Calabi-Yau pair $(X,D)$. Let $\beta$ be a curve class with $\beta\cdot D\geq 2$. Then
\[ D_{0,1}(X,\beta) = \sum_{h\in\mathfrak{H}^\psi_{0,1}(B,\beta)_P} \text{Mult}(h), \]
independent of the point $P$.
\end{theorem}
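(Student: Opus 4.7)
The plan is to follow the general logarithmic degeneration strategy outlined in \S\ref{S:idea} and applied to relative and $2$-marked invariants in \cite{Gra22, Gra2}, adapting it to the descendant setting. First I would take the toric degeneration $\mathcal{X}\to T$ of $(X,D)$ whose dual intersection complex is $B$, and refine it by a logarithmic modification compatible with the finitely many tropical curves in $\mathfrak{H}^\psi_{0,1}(B,\beta)_P$. This produces a new degeneration $\mathcal{Y}\to T$ whose central fiber $Y_0$ has one irreducible component $Y_V$ for each vertex $V$ of a tropical curve in this set, glued along toric divisors corresponding to edges. By construction the curves on $Y_0$ that are limits of curves in $\mathcal{M}_{0,1}(X,\beta)$ meeting the point/$\psi$ conditions are torically transverse, and the marked point must lie on the component $Y_{V_P}$ corresponding to the distinguished high-valent vertex $V_P$ mapped to $P$. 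This reduces the calculation to a degeneration formula for logarithmic Gromov--Witten invariants, as in \cite{KLR}, plus a log resolution at the affine singularities of $B$ contributing the bounded-leg factors $m_L=(-1)^{w_L+1}/w_L^2$ of Definition \ref{defi:mult}, exactly as in \cite{Gra22}.

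Second, at each vertex $V\neq V_P$ the local contribution is a standard logarithmic Gromov--Witten invariant on the toric variety $Y_V$ with tangency conditions along toric divisors dictated by the weights $u_{(V,E)}$ of incident edges, and point conditions on these divisors dictated by a choice of orientation. After deformation to a trivalent graph the analysis of \cite{Gra22} shows this contribution equals the trivalent multiplicity $m_V$, which for higher-valent $V$ is captured by the product formula $m_V=\prod_{V'} m_{V'}$ of Definition \ref{defi:mult}. Edges contribute $w_E$ factors from the tangency conditions that cancel against the $\tfrac{1}{|\mathrm{Aut}|}$ and orientation factors in the standard way, and the summation over orientations is what selects the virtually enumerative combinatorial type.

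Third, at the distinguished vertex $V_P$ the local problem is to compute the descendant invariant $D_{0,\nu_{V_P}}(Y_{V_P},\beta_{V_P})$ of the toric component $Y_{V_P}$ with a single marked point carrying $\psi^{\beta\cdot D-2}\cdot\mathrm{ev}^\star[\mathrm{pt}]$ and tangency conditions along the toric divisors of $Y_{V_P}$ given by the weights of the $\nu_{V_P}=\beta\cdot D$ incident edges. By the string/divisor-equation reduction underlying Proposition \ref{prop:Giv} (Givental's formula) the contribution reduces to a purely combinatorial factor matching the vertex multiplicity $m_{V_P}$, using the remark after Definition \ref{defi:psi} from \cite{MR}, which says that for a single psi class insertion the Mandel--Ruddat modification of the vertex multiplicity is trivial. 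Multiplying over all vertices and bounded legs recovers $\mathrm{Mult}(h)$, and summation over $h\in\mathfrak{H}^\psi_{0,1}(B,\beta)_P$ gives the stated equality.

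The main obstacle is this local descendant calculation at $V_P$: one must verify that the log descendant invariant on the (possibly singular) toric component $Y_{V_P}$ with prescribed tangencies and a single psi class really reproduces $m_{V_P}$ as defined in Definition \ref{defi:mult}, because this is where the non-Fano nature of $X$ could in principle interfere through the affine singularities on incident edges. The point, however, is that the local geometry at $V_P$ only sees a chart of $B$ modeled on a fan, so the computation is identical to the Fano case treated in \cite{MR, Gra22}, and the non-Fano features of $(X,D)$ only appear globally via the bounded-leg multiplicities $m_L$ at affine singularities, which are already known from \cite{Gra22}. Independence of $P$ then follows as in \cite{Gra2}: crossing a wall of the scattering diagram produces a new tropical curve (or splits/merges one) with the same total multiplicity, by the consistency of $\mathscr{S}_\infty(B)$ ensured by the scattering procedure in \S\ref{S:scattering}.
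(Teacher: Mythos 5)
Your proposal is correct and follows essentially the same route as the paper: refine the toric degeneration along the tropical curves, apply the degeneration formula as in \cite{Gra22}, and handle the distinguished high-valent vertex by the descendant correspondence of \cite{MR}, whose vertex-multiplicity modification is trivial for a single psi class. The only point the paper makes explicit that you treat more implicitly is that \cite{MR} requires nef curve classes, which holds for the local class $\beta_V$ because its intersections with the toric divisors of $Y_V$ are the (positive) weights of the incident edges.
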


\begin{proof}
The idea of proof is described in \S\ref{S:idea} and can be made rigorous as in \cite{Gra22}. All tropical curves in $\mathfrak{H}^\psi_{0,1}(B,\beta)_P$ induce a refined toric degeneration $\mathcal{Y}$ such that a tropical curve in $\mathfrak{H}^\psi_{0,1}(B,\beta)_P$ corresponds to a torically transverse curve on the central fiber $Y_0$. The higher valency vertex $V$, corresponds to curves on the toric variety $Y_V$ with curve class $\beta_V$ defined by the edges attached to $V$, and with psi class insertion of $\psi^{\beta\cdot D-2}\text{ev}^\star[\text{pt}]$. By \cite{MR}, Theorem 1.1, the multiplicity $m_V$ equals the number of such curves. So we use the results of \cite{MR} locally at the vertex $V$ and the ideas of \S\ref{S:idea} to globalize to the whole tropical curve. The results of \cite{MR} only hold for nef curve classes. But the curve class $\beta_V$ is defined by the positive intersection multiplicities with the toric divisors of $Y_V$, given by the weights of the attached edges, hence it is nef.
\end{proof}

\begin{remark}
If we instead consider the toric boundary $D=\partial X=D_1\cup\ldots\cup D_r$, then the dual intersection complex of $(X,D)$ is simply the fan of $X$, without any affine singularities. Then the statement of Theorem \ref{thm:tropcorrdesc} only holds for curve classes $\beta$ with $\beta\cdot D_i > 0 \ \forall i$. Indeed, if $\beta\cdot D_i \leq 0$ for some $i$, then $\mathfrak{H}^\psi_{0,1}(B,\beta)_P$ is empty. 
\end{remark}

\begin{example}
If $D=\partial X$ is the toric boundary, the dual intersection complex $B$ is simply given by the fan of $X$, without affine singularities. If $\beta$ is nef and $\beta\cdot D\geq 2$, there is only one one tropical curve of class $\beta$ that fulfills the condition $\psi^{\beta\cdot D-2}\text{ev}^\star[\text{pt}]$. It has one vertex of valency $\beta\cdot D$ and, for each ray $\rho$ of $\Sigma$, it has $\beta\cdot D_\rho$ legs in direction $\rho$. The vertex has multiplicity $m_V=1$, hence
\[ \text{Mult}(h) = \frac{1}{|\textup{Aut}(h)|}m_V = \frac{1}{\sum_\rho (\beta\cdot D_\rho)!}. \]
This equals $D_{0,1}(X,\beta)$ by Proposition \ref{prop:Giv}. If $\beta$ is not nef, then there is no tropical curve that fulfills the condition $\psi^{\beta\cdot D-2}\text{ev}^\star[\text{pt}]$, since it would have an edge of negative weight. However, if $D$ is a smooth anticanonical divisor and $\beta\cdot D\geq 2$, then there exists a corresponding tropical curve in the dual intersection complex $B$ of $(X,D)$.
\end{example}

\section{The theta function is a mirror potential} 
\label{S:desc}

\begin{proposition}
\label{prop:theta}
We have, for all $k>0$ and general points $P$,
\[ \const(\vartheta_1(X,D)_P^k) = \sum_{\beta:\beta\cdot D=k} \sum_{h\in\mathfrak{H}^\psi_{0,1}(B,\beta)_P} \text{Mult}(h) z^\beta t^k. \]
\end{proposition}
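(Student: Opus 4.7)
The strategy is to expand $\vartheta_1(X,D)_P^k$ as a sum over $k$-tuples of broken lines, extract the constant term as a balancing condition, and then convert broken lines to tropical disks via Proposition \ref{prop:complete}. From Definition \ref{def:theta} one has
\[
\vartheta_1(X,D)_P^k=\sum_{(\mathfrak{b}_1,\ldots,\mathfrak{b}_k)\in\mathfrak{B}_1(B)_P^k}\Big(\prod_{i=1}^k a_{\mathfrak{b}_i}\Big)\,z^{m_{\mathfrak{b}_1}+\cdots+m_{\mathfrak{b}_k}}.
\]
Taking $\const$ with respect to $x,y$ picks out precisely those tuples for which $\sum_i\bar m_{\mathfrak{b}_i}=0$ in the tangent lattice at $P$. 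A direct $t$-degree count (each bent broken line contributes $t^{p+1}$, and the constraint $\sum p_i=k-j$ for $j$ bent lines forces the $t$-exponent to be $k$) shows that every surviving contribution has prefactor of the form $z^\beta t^k$, with $\beta$ read off from the collected $z$-exponents.

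Next I invoke Proposition \ref{prop:complete} to replace each $a_{\mathfrak{b}_i}$ by $\sum_{h^\circ_i\in\mu^{-1}(\mathfrak{b}_i)}\textup{Mult}(h^\circ_i)$, obtaining a sum over ordered $k$-tuples of tropical disks rooted at $P$. I then glue the $k$ disks at their common root by identifying the univalent endpoints $V_\infty^{(i)}$ into a single vertex $V$ incident to the $k$ edges $E_\infty^{(i)}$. The cancellation $\sum_i\bar m_{\mathfrak{b}_i}=0$ is exactly the balancing condition at $V$, so the resulting graph is a genuine tropical curve $h:\Gamma\to B$ with one $k$-valent vertex at $P$ and $k$ unbounded legs in direction $m_{\textup{out}}$ of weight one. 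Hence $\beta\cdot D=k$ and $h\in\mathfrak{H}^\psi_{0,1}(B,\beta)_P$. Conversely, cutting $h$ at $V$ recovers an unordered $k$-tuple of tropical disks whose underlying broken lines contribute to $\vartheta_1^k$.

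The remaining task is to match multiplicities. Each $V_\infty^{(i)}$ is univalent with $m_{V_\infty^{(i)}}=1$, so $\prod_i\textup{Mult}(h^\circ_i)$ accounts for all vertex multiplicities of $h$ other than the new vertex $V$, together with all bounded-leg factors $m_L$. Passing from ordered to unordered tuples contributes a permutation factor that is absorbed into the $1/|\textup{Aut}(h)|$ inside $\textup{Mult}(h)$. What remains is the local identity
\[
\sum_{\substack{\text{orderings of the}\\ k\text{ disks meeting at }V}}\prod_{i=1}^k (\text{local disk contribution at }V) = k!\,m_V,
\]
where $m_V$ is defined via trivalent deformation in Definition \ref{defi:mult}. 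This is precisely the combinatorial content of a single $\psi$-class insertion at a marked point in the tropical descendant correspondence of \cite{MR}, applied now at the vertex $V$, and is the same local identity used in the proof of Theorem \ref{thm:tropcorrdesc}. Collecting contributions over all $h\in\mathfrak{H}^\psi_{0,1}(B,\beta)_P$ and then over curve classes $\beta$ with $\beta\cdot D=k$ gives the stated formula.

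The main obstacle is the multiplicity bookkeeping at $V$: unpacking Definition \ref{defi:mult} to see that joining $k$ disks at a common root reproduces the trivalent-deformation multiplicity $m_V$, and that the combinatorial factors from ordering the disks align with the automorphism factor $1/|\textup{Aut}(h)|$. The rest is essentially a formal restatement of the broken-line/tropical-disk dictionary, but making the factorial and automorphism accounting precise is where the descendant correspondence of \cite{MR} is genuinely invoked.
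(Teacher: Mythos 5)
Your overall strategy coincides with the paper's: expand $\const(\vartheta_1(X,D)_P^k)$ via the multiplication rule of Proposition \ref{prop:multiplication} as a sum over $k$-tuples of broken lines balanced at $P$, disk-complete each broken line using Proposition \ref{prop:complete}, and glue the resulting disks at their common root into a tropical curve with a $k$-valent vertex at $P$. Your side remarks on the $t$-degree (which the paper isolates in Lemma \ref{lem:homog}) are fine in conclusion, though your count via ``$t^{p+1}$ per bent line'' imports the normalization of the at-infinity formula rather than the general $\varphi$-grading; and your displayed local identity with the factor $k!\,m_V$ is stated loosely --- each root $V_\infty^{(i)}$ carries multiplicity $1$ by definition, the paper simply asserts via Proposition \ref{prop:complete} that the glued curve has multiplicity $\prod_i a_{\mathfrak{b}_i}$ with the ordering count absorbed into $1/|\mathrm{Aut}(h)|$, and no modified vertex multiplicity from \cite{MR} is needed (the remark after Definition \ref{defi:psi} notes that the modification is trivial for a single $\psi$-insertion).

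The genuine gap is in your one-sentence converse, ``cutting $h$ at $V$ recovers an unordered $k$-tuple of tropical disks whose underlying broken lines contribute to $\vartheta_1^k$.'' For this you must show that every $h\in\mathfrak{H}^\psi_{0,1}(B,\beta)_P$ decomposes at $V$ into exactly $\beta\cdot D$ branches, each carrying a single unbounded leg of weight one; otherwise a branch could be the disk completion of a broken line in $\mathfrak{B}_q(B)_P$ with $q>1$, or carry no unbounded leg at all (a scattering disk), and such curves would be counted on the right-hand side but never produced from $k$-tuples of \emph{primitive} broken lines. This is exactly where the hypothesis that $P$ is general enters, and it occupies the entire first paragraph of the paper's proof: if all leaves downstream of an edge at $V$ ended in affine singularities, that branch would lie in the support of the scattering diagram and hence so would $P$, contradicting generality; therefore each of the $\beta\cdot D$ edges at $V$ leads to at least one unbounded leg, and since the total weight of unbounded legs equals $\beta\cdot D$, each branch carries exactly one such leg, of weight one. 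Without this step your correspondence is not known to be surjective onto $\mathfrak{H}^\psi_{0,1}(B,\beta)_P$, so the claimed equality of sums is incomplete as written.
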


\begin{proof}
Let $h : \Gamma \rightarrow B$ be a tropical curve in $\mathfrak{H}^\psi_{0,1}(B,\beta)_P$. Let $V$ be the vertex with prescribed position. Since $g=0$, $\Gamma$ is a tree. Consider it as a rooted tree with root vertex $V$. Let $E$ be an edge attached to $V$ and follow its path to a leaf of $\Gamma$. Since $P$ is in general position, the path cannot end in an affine singularity of $B$. Otherwise, the path would be contained in the support (= union of rays) of the scattering diagram, and so would be $P$. So the leaf must be an unbounded leg. The same is true for all edges attached to $V$. By definition of $\mathfrak{H}^\psi_{0,1}(B,\beta)_P$, there are $\beta \cdot D$ such edges, hence $\beta \cdot D$ unbounded legs in $\Gamma$. By tropical correspondence, $\beta \cdot D$ also equals the sum of weights of unbounded legs. Hence, all unbounded legs must have weight $1$.

By the multiplication rule of theta functions (Proposition \ref{prop:multiplication}), $\const(\vartheta_1(q)^m)$ is a sum over $m$-tuples of broken lines that are balanced in the endpoint $P$. Given such a tuple $(\mathfrak{b}_1,\ldots\mathfrak{b}_m)$, the union of the broken lines $\mathfrak{b}_1,\ldots\mathfrak{b}_m$ can be viewed as tropical curve that has an $m$-valent vertex mapping to $P$. By Proposition \ref{prop:complete}, its multiplicity is given by $\prod_{i=1}^m a_{\mathfrak{b}_i}$, which equals term appearing in the mutliplication rule for $\const(\vartheta_1(q)^m)$.
\end{proof}

\begin{lemma}
\label{lem:homog}
$\const(\vartheta_1(X,D)_P^k)$ is homogeneous in $t$ of order $k$, hence $\const(t^{-1}\vartheta_1(X,D)_P^k)$ is independent of $t$.
\end{lemma}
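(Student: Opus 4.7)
The lemma follows essentially immediately from Proposition \ref{prop:theta}, which has just been proved. That proposition gives the explicit formula
\[
\const(\vartheta_1(X,D)_P^k) \;=\; \sum_{\beta\colon\beta\cdot D=k}\ \sum_{h\in\mathfrak{H}^\psi_{0,1}(B,\beta)_P} \text{Mult}(h)\, z^\beta\, t^k,
\]
and every summand carries the single factor $t^k$ and no other $t$-dependence. Hence $\const(\vartheta_1(X,D)_P^k)$ is homogeneous of degree $k$ in $t$, as asserted. For the ``hence'' clause, one parses ``$\const(t^{-1}\vartheta_1^k)$'' as $\const\big((t^{-1}\vartheta_1(X,D)_P)^k\big) = t^{-k}\,\const(\vartheta_1(X,D)_P^k)$, which is then independent of $t$. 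This is precisely the form in which the rescaled potential $t^{-1}\vartheta_1$ will enter the open mirror map $M_{t^{-1}\vartheta_1}$ of Theorem \ref{thm:main}, since $a_{t^{-1}\vartheta_1}(z) = \sum_{k\geq 1}\tfrac{1}{k}\const\big((t^{-1}\vartheta_1)^k\big)$ must have coefficients that are pure functions of $z$.

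Geometrically the $t^k$ is transparent. As observed in the proof of Proposition \ref{prop:theta}, every tropical curve contributing to $\const(\vartheta_1^k)$ has exactly $\beta\cdot D = k$ unbounded legs, all of weight $1$: none can terminate at an affine singularity because $P$ lies in the interior of a chamber (otherwise the path from the high-valency vertex to the singularity would lie in the scattering support, forcing $P$ into that support), and none can have higher weight because each of the $k$ broken-line factors of $\vartheta_1$ starts with a weight-$1$ asymptotic monomial. Each weight-$1$ asymptotic leg contributes one factor of $t$ to the endpoint monomial of its broken line, so the $k$ broken-line factors of $\vartheta_1^k$ combine to give exactly $t^k$ after taking the constant term in $x,y$.

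There is essentially no obstacle in the argument itself: once Proposition \ref{prop:theta} is available, the lemma is a bookkeeping remark about which powers of $t$ survive. The one place where a little care is needed is the parsing convention just mentioned; with that parsing, both halves of the statement are immediate, and the lemma is then ready to feed into the computation of $a_{t^{-1}\vartheta_1}(z)$ and ultimately into the identification of $M_{t^{-1}\vartheta_1}(Q)$ with the generating function of $2$-marked logarithmic Gromov--Witten invariants in Theorem \ref{thm:main}.
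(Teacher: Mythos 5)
Your proof is correct, but it takes a different route from the paper's. The paper does not invoke Proposition \ref{prop:theta} here; its one-line argument works in an unbounded chamber: by Corollary \ref{cor:theta}, a term of $\vartheta_1(X,D)_\infty$ carrying $z^\beta$ has $t$-order $\beta\cdot D$ and $y$-power $1-\beta\cdot D$, so the constant-term condition on a product of $k$ such terms forces $\sum_i\beta_i\cdot D=k$ and hence total $t$-order $k$; chamber-independence of $\const(\vartheta_1(X,D)_P^k)$ (Proposition \ref{prop:multiplication}) then transports the conclusion to every $P$. Your route, reading the $t^k$ off the displayed formula of Proposition \ref{prop:theta}, is formally legitimate given the order in which the results appear, but it buys less than it seems: the proof of Proposition \ref{prop:theta} tracks multiplicities and curve classes, not powers of $t$, and one reasonable reading is that the paper states Lemma \ref{lem:homog} separately precisely to supply an independent justification of that $t^k$. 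If that is the intent, deriving the lemma from the proposition is circular, whereas the paper's detour through infinity is not. Your parsing of $\const(t^{-1}\vartheta_1^k)$ as $t^{-k}\const(\vartheta_1^k)$ is the right one and worth making explicit, as you do.

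One imprecision in your geometric paragraph: the claim that ``each weight-$1$ asymptotic leg contributes one factor of $t$'' is not a chamber-independent statement. The $t$-order of a monomial is $\varphi_x(m)-h$ and can increase as a broken line propagates and bends, so the $t$-order of a single ending monomial depends on where $P$ sits (it is $1$ in the CPS chamber but $\beta\cdot D$ at infinity). What is chamber-independent is only the total $t$-order of a \emph{balanced} $k$-fold product, and that is exactly the quantity the paper's proof pins down by passing to the unbounded chamber.
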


\begin{proof}
This is because a term in $\vartheta_1(X,D)_\infty$ with $z^\beta$ has $t$-order $\beta\cdot D$ and $\const(\vartheta_1(X,D)_P^k)$ is independent of the chamber.
\end{proof}

\begin{theorem}
\label{thm:theta}
For every point $P$ inside a chamber in the scattering diagram, $t^{-1}\vartheta_1(X,D)_P$ is a mirror potential for $(X,D)$, in the sense of Definition \ref{defi:MS},
\[ \pi_{t^{-1}\vartheta_1(X,D)_P}(z) = G(z). \]
\end{theorem}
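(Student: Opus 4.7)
The plan is to reduce to a coefficient-by-coefficient comparison in the curve class $\beta$, chaining three ingredients developed in the preceding sections: Lemma \ref{lem:homog} (strip $t$), Proposition \ref{prop:theta} (convert to tropical curves), and Theorem \ref{thm:tropcorrdesc} (convert tropical counts to descendant invariants).

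First, I would expand via Definition \ref{defi:pi}:
\[ \pi_{t^{-1}\vartheta_1(X,D)_P}(z) = \sum_{k>0}\const\bigl((t^{-1}\vartheta_1(X,D)_P)^k\bigr), \]
and invoke Lemma \ref{lem:homog} to rewrite each summand as $t^{-k}\const(\vartheta_1^k)$, which is independent of $t$ since $\const(\vartheta_1^k)$ is $t$-homogeneous of degree $k$. This puts $\pi_{t^{-1}\vartheta_1}(z)$ in $\mathbb{C}[z_1,\ldots,z_r]$.

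Next, I would apply Proposition \ref{prop:theta} to express $\const(\vartheta_1^k)$ as a tropical sum over $h \in \mathfrak{H}^\psi_{0,1}(B,\beta)_P$ with $\beta\cdot D = k$, and then invoke Theorem \ref{thm:tropcorrdesc} to identify $\sum_h \textup{Mult}(h)$ with $D_{0,1}(X,\beta)$. Reindexing the outer sum over $k$ as a sum over effective classes $\beta$ (with $k=\beta\cdot(-K_X)$) and accounting for the combinatorial factor $(\beta\cdot D)!$ that arises when passing from ordered tuples of broken lines (via the multiplication rule of Proposition \ref{prop:multiplication}) to unordered tropical curves, yields
\[ \pi_{t^{-1}\vartheta_1}(z) = \sum_\beta (\beta\cdot(-K_X))!\, D_{0,1}(X,\beta)\, z^\beta = G_X(z), \]
by Definition \ref{defi:G}. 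The chamber-independence is automatic since each input step is formulated for an arbitrary general point $P$ inside any chamber.

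The hard part will be tracking the factor $(\beta\cdot D)!$. The multiplication rule expresses $\const(\vartheta_1^k)$ as a sum over ordered $k$-tuples of broken lines balancing at $P$, while tropical curves in $\mathfrak{H}^\psi_{0,1}(B,\beta)_P$ are counted modulo $|\textup{Aut}(h)|$; the $k!/|\textup{Aut}(h)|$ orderings of the $k=\beta\cdot D$ legs of a given $h$, combined with the $1/|\textup{Aut}(h)|$ already built into $\textup{Mult}(h)$ via Definition \ref{defi:mult}, should produce precisely the $k!$ needed to reproduce the factorial in Definition \ref{defi:G}. Checking that this bookkeeping is correct in all chambers --- including those reached after wall-crossings that replace straight rays by genuine broken lines with nontrivial $a_{\mathfrak{b}}$ --- amounts to verifying that the disk completion of Proposition \ref{prop:complete} and the leg extension of Corollary \ref{cor:bh} interact with the psi-class vertex at $P$ in the expected way, reducing the problem to the local combinatorics at a single high-valency vertex.
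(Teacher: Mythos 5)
Your proposal follows exactly the paper's route: the paper's entire proof of Theorem \ref{thm:theta} is the single sentence that it follows from Theorem \ref{thm:tropcorrdesc}, Proposition \ref{prop:theta} and Lemma \ref{lem:homog}, which are precisely the three ingredients you chain together in the same order. Your explicit tracking of the $(\beta\cdot D)!$ factor arising from ordered tuples of broken lines in the multiplication rule versus automorphism-weighted unordered tropical curves is a point the paper leaves implicit, and is indeed needed to reconcile the literal statements of Proposition \ref{prop:theta} and Theorem \ref{thm:tropcorrdesc} with the factorial in Definition \ref{defi:G}.
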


\begin{proof}
This follows from Theorem \ref{thm:tropcorrdesc}, Proposition \ref{prop:theta} and Lemma \ref{lem:homog}.
\end{proof}

As a consequence, we can define the mirror maps (see \S\ref{S:mirmap}) using the classical period of the corrected potential $\vartheta_1$, which can be computed combinatorially.

\begin{remark}
If $X$ is Fano, then there is a ``central'' chamber given by the spanning polytope $\Delta^\star$ of $\Sigma$ (where $X_\Sigma$ is a toric variety to which $X$ admits a $\mathbb{Q}$-Gorenstein degeneration). If $P$ is inside the central chamber, then $\vartheta_1(X,D)_P = W_\Sigma$.
\end{remark}

\section{The open mirror map counts $2$-marked invariants} 
\label{S:ms}

Recall the definition of mirror maps from \S\ref{S:mirmap}.

\begin{theorem}
\label{thm:mirmap}
Let $(X,D)$ be a smooth log Calabi-Yau pair with mirror dual potential $W$. Then, under the change of variables $Q_i=z_i(t/y)^{d_i}$, with $d_i=\beta_i\cdot D$, we have
\[ \vartheta_1(t,z,y)_\infty = yM_W(Q), \]
where $\vartheta_1(t,z,y)_\infty := \vartheta_1(X,D)_\infty$ is the theta function at infinity.
\end{theorem}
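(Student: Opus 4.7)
The plan is to expand both sides of the claimed equality in the $Q$ variables and match them term by term, treating the left-hand side via Corollary \ref{cor:theta} and the right-hand side via the explicit definition of $M_W$ in \S\ref{S:mirmap}.

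First, I will rewrite the left-hand side. Corollary \ref{cor:theta} with $q=1$ gives
\[
\vartheta_1(X,D)_\infty \;=\; y \;+\; \sum_{p\ge 1}\sum_{\beta:\,\beta\cdot D=p+1} p\,R_{0,(p,1)}((X,D),\beta)\,z^\beta\,t^{p+1}y^{-p}.
\]
The reciprocity relation $p^2 R_{0,(p,1)}=R_{0,(1,p)}$ from \S\ref{S:2marked} turns the coefficient $pR_{0,(p,1)}$ into $\tfrac{1}{p}R_{0,(1,p)}$ with $p=\beta\cdot D-1$. Since $t^{p+1}y^{-p}=y(t/y)^{\beta\cdot D}$ and $\beta=\sum n_i\beta_i$ implies $z^\beta(t/y)^{\beta\cdot D}=\prod_i(z_i(t/y)^{d_i})^{n_i}=Q^\beta$, this rearranges to
\[
\vartheta_1(X,D)_\infty \;=\; y\,\Bigl(1 + \sum_\beta \tfrac{1}{\beta\cdot D-1}\,R_{0,(1,\beta\cdot D-1)}((X,D),\beta)\,Q^\beta\Bigr).
\]

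Second, I will unfold the right-hand side $y\,M_W(Q)$. By Theorem \ref{thm:theta}, the potential $W=t^{-1}\vartheta_1$ is mirror dual to $(X,D)$, so $\pi_W(z)=G_X(z)$. Using Proposition \ref{prop:theta} together with Theorem \ref{thm:tropcorrdesc} to compute $\const_y(W^k)$, I obtain
\[
a_W(z) \;=\; \sum_{k>0}\tfrac{1}{k}\const_y(W^k) \;=\; \sum_\beta \tfrac{1}{\beta\cdot D}\,D_{0,1}(X,\beta)\,z^\beta,
\]
and the closed mirror map provides the inversion $z=z(Q)$, after which $M_W(Q)=e^{a_W(z(Q))}$ becomes a concrete formal power series in $Q$.

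Third, the identification of $1+\sum_\beta \tfrac{1}{\beta\cdot D-1}R_{0,(1,\beta\cdot D-1)}Q^\beta$ with $e^{a_W(z(Q))}$ is where the combinatorial identities from \cite{GRZZ} enter, generalized to the non-Fano setting. The key point is that the operation "exponentiate $a_W(z)$ and substitute $z=z(Q)$" admits a tropical interpretation as disk-completion plus leg-extension on the scattering diagram. Combining this with Theorem \ref{thm:tropcorr} relating the multiplicities of tropical curves to $R_{0,(1,\beta\cdot D-1)}$, and Theorem \ref{thm:tropcorrdesc} relating them to $D_{0,1}$, shows both sides of the identification are equal as formal series in $Q$.

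The main obstacle will be verifying that the combinatorial identities of \cite{GRZZ} carry through unchanged to the non-Fano case. In the Fano setting, they rely on $W_\Sigma$ being a finite Laurent polynomial; here $\vartheta_1$ is an infinite series with contributions from instanton-corrected broken lines. Since the identities are purely formal, involving only the logarithmic antiderivative, exponentiation, and the $R_{0,(p,q)}\leftrightarrow R_{0,(q,p)}$ reciprocity, they should persist once the tropical interpretation of $\vartheta_1$ from \S\ref{sec:scatteringbroken}--\S\ref{S:tropical} replaces the finite sum of Hori--Vafa monomials. A secondary technical point is reconciling the sign conventions for $a_W$ (the $\log(1-W)$ of Definition \ref{defi:mirmap} versus $\log(1+\vartheta_1)$ in the introduction); since $\vartheta_1$ already has signs built in from the broken-line construction, this is a bookkeeping issue that can be tracked through the change of variables $Q_i=z_i(t/y)^{d_i}$.
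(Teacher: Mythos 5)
There is a genuine gap. Your first step (expanding $\vartheta_1(X,D)_\infty$ via Corollary \ref{cor:theta}, applying the reciprocity $p^2R_{0,(p,1)}=R_{0,(1,p)}$, and absorbing $t^{p+1}y^{-p}$ into $Q^\beta$) is fine, and so is the observation that the right-hand side is $y\,e^{a_W(z(Q))}$. But that reduces the theorem to the identity
\[
e^{a_W(z(Q))} \;=\; 1+\sum_\beta \tfrac{1}{\beta\cdot D-1}\,R_{0,(1,\beta\cdot D-1)}((X,D),\beta)\,Q^\beta,
\]
which is precisely Theorem \ref{thm:corr} --- i.e.\ the entire content of the statement you are trying to prove (in the paper this identity is a \emph{corollary} of Theorem \ref{thm:mirmap} plus Corollary \ref{cor:theta}, not an input). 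Your third step does not prove it: asserting that ``exponentiate $a_W$ and substitute $z=z(Q)$ admits a tropical interpretation as disk-completion plus leg-extension'' is not an argument, and disk completion and leg extension (Proposition \ref{prop:complete}, Corollary \ref{cor:bh}) are the tools used to prove Corollary \ref{cor:theta} itself, not to relate the exponential of the period to the theta function. The needed identity is also not ``purely formal'' in the sense you suggest; it does not follow from reciprocity and bookkeeping alone. (As a side remark, your formula $a_W(z)=\sum_\beta\tfrac{1}{\beta\cdot D}D_{0,1}(X,\beta)z^\beta$ drops a factor of $(\beta\cdot D)!$ coming from the definition of $G_X$.)

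The paper's proof takes a different and self-contained route that never expands the left-hand side enumeratively. Writing $W=t^{-1}\vartheta_1$ (Theorem \ref{thm:theta}), the change of variables $Q_i=z_i(t/y)^{d_i}$ together with the definition of the closed mirror map forces $-t/y=M_W(Q)$, so the claim becomes a composition identity $f(g(t))=t$ for the Laurent series $f(y)=t^{-1}\vartheta_1$ (after the substitution $y\mapsto t/y$ this has a simple pole, $f(y)=y^{-1}+\dots$), where $g$ is built from $\exp\bigl(\sum_{k>0}\tfrac1k[y^0]f^k\,t^{-k}\bigr)$. This is then verified by Lagrange inversion for Laurent series (Lemma \ref{lem:inversion}), which expresses the compositional inverse through the residues $[y^{-1}]f^k$, combined with the Bell-polynomial exponential identity of Lemma \ref{lem:3} (from \cite{GRZZ}, Lemma A.12) relating $\exp\bigl(\sum_k\tfrac1k[x^k]f^kz^k\bigr)$ to $\sum_k\tfrac1k[x^{k-1}]f^kz^{k-1}$. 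To repair your proposal you would need to replace your step 3 with exactly this inversion argument (or an equivalent one); nothing in your outline currently supplies it.
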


To prove Theorem \ref{thm:mirmap} we first we need two lemmas.

\begin{definition}
For a Laurent series $f(x)=\sum_{k\geq k_{\min}} f_kx^k$ write $[x^k]f=f_k$.
\end{definition}

\begin{lemma}[Lagrange inversion for Laurent series]
\label{lem:inversion}
Let $f(y)=y^{-1}+\sum_{k\geq 0}f_ky^k$ be a Laurent series with only simple poles. Then the composition inverse $g(z)$ satisfying $f(g(z))=z$ is a power series in $z^{-1}$ given by
\[ g(z) = \sum_{k> 0}\frac{z^{-k}}{k}[x^{-1}]f^k. \]
\end{lemma}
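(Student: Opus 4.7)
The plan is to reduce the statement to the classical Lagrange inversion formula for formal power series via the substitution that trades the pole of $f$ at $y=0$ for a simple zero. Concretely, I would set $\tilde f(y) := 1/f(y)$ and $w := 1/z$, so that $f(g(z)) = z$ becomes the power‑series composition‑inverse equation $\tilde f(h(w)) = w$ with $h(w) := g(1/w)$. The point is that $\tilde f$ lies in $y\,\mathbb{C}\llbracket y\rrbracket$ with $\tilde f'(0)=1$, so classical Lagrange inversion applies verbatim.

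First I would verify the shape of $\tilde f$. Writing $f(y) = y^{-1}(1 + f_0 y + f_1 y^2 + \cdots)$, the series $1 + f_0 y + f_1 y^2 + \cdots$ has constant term $1$ and is therefore invertible in $\mathbb{C}\llbracket y\rrbracket$. Hence
\[ \tilde f(y) = y\bigl(1 + f_0 y + f_1 y^2 + \cdots\bigr)^{-1} = y - f_0 y^2 + (f_0^2 - f_1)y^3 - \cdots \in y\,\mathbb{C}\llbracket y\rrbracket, \]
so $\tilde f$ admits a unique composition inverse $h(w) \in w\,\mathbb{C}\llbracket w\rrbracket$. Then the relation $f(g(z))=z$ is equivalent under $w = 1/z$ to $\tilde f(g(1/w)) = w$, which forces $g(1/w) = h(w)$. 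This already shows that $g(z) = h(1/z)$ is a well‑defined power series in $z^{-1}$ with no constant term, justifying the expansion $g(z) = \sum_{n\ge 1}[w^n]h\cdot z^{-n}$.

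Next I would apply the classical Lagrange inversion formula to $h = \tilde f^{-1}$, which states
\[ [w^n]h(w) = \frac{1}{n}\,[y^{n-1}]\!\left(\frac{y}{\tilde f(y)}\right)^{\!n}. \]
The key simplification is that $y/\tilde f(y) = y f(y) = 1 + f_0 y + f_1 y^2 + \cdots$, so
\[ \left(\frac{y}{\tilde f(y)}\right)^{\!n} = y^n f(y)^n \quad\Longrightarrow\quad [y^{n-1}]\bigl(y^n f(y)^n\bigr) = [y^{-1}] f(y)^n. \]
Substituting back yields
\[ g(z) = h(1/z) = \sum_{n\ge 1} \frac{z^{-n}}{n}\,[y^{-1}]f^n, \]
which is exactly the claim.

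The only real obstacle is bookkeeping: one must check that each manipulation (inverting $1 + f_0 y + \cdots$, raising $f$ to powers, extracting $[y^{-1}]$) is well defined in the appropriate formal ring, and that the composition $h \circ (1/z)$ makes sense in $\mathbb{C}\llbracket z^{-1}\rrbracket$. Since $\tilde f$ has order one at $y=0$, composition inverses exist uniquely in $w\,\mathbb{C}\llbracket w\rrbracket$, and since each $f^n$ lies in $y^{-n}\mathbb{C}\llbracket y\rrbracket$ the coefficient $[y^{-1}]f^n$ is a finite sum of monomials in the $f_k$'s. So the reduction is clean, and the classical Lagrange inversion does all the combinatorial work; no new residue or contour argument is needed.
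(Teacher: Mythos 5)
Your proof is correct and follows essentially the same route as the paper: both pass to the multiplicative inverse $1/f(y)=y/(yf(y))$, apply classical Lagrange inversion to this power series with unit linear coefficient, and use the identity $[y^{n-1}]\bigl(y f(y)\bigr)^n=[y^{-1}]f^n$ to land on the stated formula. The only cosmetic difference is that the paper first produces the compositional inverse of $1/f$ as a series in $z$ and substitutes $z\mapsto z^{-1}$ at the end, whereas you work with $w=1/z$ from the outset.
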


\begin{proof}
Look at the multiplicative inverse
\[ \frac{1}{f(y)}=\frac{y}{h(y)}, \quad h(y)=yf(y)=1+\sum_{k\geq 0}f_ky^{k+1}. \]
This is a power series with nonzero linear term, and by the Laurent inversion theorem we have $1/f(g(z))=z$ for
\[ g(z) = \sum_{k>0}\frac{z^k}{k}[x^{k-1}]h^k = \sum_{k>0}\frac{z^k}{k}[x^{-1}]f^k. \]
The equation $1/f(g(z))=z$ is equivalent to $f(g(z))=z^{-1}$ and to $f(g(z^{-1}))=z$. So the inverse of $f(y)$ is $g(z^{-1})$ as claimed.
\end{proof}

\begin{remark}
Note that the Laurent series $f(y)$ is not bijective, so there is no global inverse. The inverse is around $z=0$ for which $1/f(g(z))$ is finite.
\end{remark}

\begin{lemma}
\label{lem:3}
Let $f(x)=1+\sum_{k\geq 1}f_kx^k$ be a power series. Then
\[ \exp\left(\sum_{k>0}\frac{1}{k}[x^k]f^kz^k\right) = \sum_{k>0}\frac{1}{k}[x^{k-1}]f^kz^{k-1}. \]
\end{lemma}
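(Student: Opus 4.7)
\textbf{Proof plan for Lemma \ref{lem:3}.} The plan is to recognize both sides as different expressions for one and the same auxiliary power series, produced by Lagrange inversion applied to a judiciously chosen functional equation.

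First, I would introduce the power series $g(z)$ satisfying $g(z) = z\,f(g(z))$, $g(0) = 0$. Since $f(0) = 1$, the map $\varphi(w) = w/f(w)$ has nonzero linear term, so its compositional inverse $g = \varphi^{-1}$ is a well-defined formal power series. By the classical Lagrange inversion formula applied to $\varphi$,
\[
[z^n]\,g(z) \;=\; \frac{1}{n}\,[w^{n-1}]\,f(w)^n,
\]
so that
\[
\frac{g(z)}{z} \;=\; \sum_{n\geq 1} \frac{1}{n}\,[w^{n-1}]f^n\, z^{n-1},
\]
which is exactly the right-hand side of the lemma. Moreover, by the defining equation of $g$, $g(z)/z = f(g(z))$.

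Next, I would identify the left-hand side with $f(g(z))$ as well, by computing its logarithm via the Lagrange--B\"urmann formula. For any formal series $H$ one has
\[
[z^n]\,H(g(z)) \;=\; \frac{1}{n}\,[w^{n-1}]\,H'(w)\,f(w)^n.
\]
Applied to $H(w) = \log f(w)$, whose derivative is $f'/f$, this yields
\[
[z^n]\,\log f(g(z)) \;=\; \frac{1}{n}\,[w^{n-1}]\,f'(w)\,f(w)^{n-1} \;=\; \frac{1}{n^2}\,[w^{n-1}]\,\tfrac{d}{dw}\!\left(f(w)^n\right) \;=\; \frac{1}{n}\,[w^n]\,f(w)^n,
\]
using $[w^{n-1}]\tfrac{d}{dw}F = n\,[w^n]F$. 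Summing over $n$ and exponentiating gives
\[
f(g(z)) \;=\; \exp\!\left(\sum_{k>0}\frac{1}{k}\,[x^k]f^k\, z^k\right),
\]
which is the left-hand side. Combining the two identifications of $f(g(z))$ yields the claim.

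I do not expect a serious obstacle: once the functional equation $g = z f(g)$ is introduced, everything reduces to two standard invocations of Lagrange inversion. The only point requiring a little care is the reduction $[w^{n-1}]\bigl(f' f^{n-1}\bigr) = [w^n] f^n$ via $f'f^{n-1} = \tfrac{1}{n}(f^n)'$, which converts the Lagrange--B\"urmann output for $\log f$ into the desired coefficient of $f^n$.
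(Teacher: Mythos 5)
Your proof is correct, but it takes a genuinely different route from the paper: the paper does not prove Lemma \ref{lem:3} at all, instead citing [GRZZ, Lemma A.12], where the identity is established via Bell polynomial identities. Your argument is self-contained and rests on two standard invocations of Lagrange inversion applied to the auxiliary series $g$ defined by $g = z f(g)$: the plain form identifies the right-hand side with $g(z)/z = f(g(z))$, and the Lagrange--B\"urmann form with $H = \log f$, together with the reduction $f' f^{n-1} = \tfrac{1}{n}(f^n)'$ and $[w^{n-1}]\tfrac{d}{dw}F = n[w^n]F$, identifies the left-hand side with $f(g(z))$ as well; both computations check out, including the vanishing of the constant term of $\log f(g(z))$. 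Your approach has the advantage of being short, self-contained, and stylistically consistent with Lemma \ref{lem:inversion}, which already invokes Lagrange inversion in the same section; the Bell polynomial route of the cited reference is presumably chosen there because it adapts to the $q$-refined setting of that paper, but for the statement as given your argument is arguably the more transparent one.
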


\begin{proof}
This is proved in \cite{GRZZ}, Lemma A.12, using Bell polynomial identities.
\end{proof}

\begin{proof}[Proof of Theorem \ref{thm:mirmap}]
By Theorem \ref{thm:theta} we can take $W=t^{-1}\vartheta_1(X,D)$. Then the claimed equation is
\[ \vartheta_1(t,z,y)_\infty = yM_{t^{-1}\vartheta_1}(Q). \]
By Definition \ref{defi:mirmap}, the closed mirror map is $Q_i=z_iM_{t^{-1}\vartheta_1}(Q(z))^{d_i}$, so that the change of variables $Q_i=z_i(t/y)^{d_i}$ means $-t/y=M_{\vartheta_1}(Q)$, or
\[ y(t,z) = -tM_{t^{-1}\vartheta_1}(Q(z))^{-1} = -te^{-\sum_{k>0}\frac{(-1)^k}{k}[y^0]\vartheta_1^kt^{-k}}. \]
Note that this only depends on $t$ by the global $t$-factor. Moreover, $-t/y=M_{\vartheta_1}(Q)$ shows that the right hand side of the claimed equation is $y(t,z)M_{\vartheta_1}(Q(z))=-t$. Consider the Laurent series in $y$ over $\mathbb{C}[z_1,\ldots,z_r]$ given by $f(y):=t^{-1}\vartheta_1(t,t/y,z)$. Then the claimed equation is equivalent to $tf(t/y(t,z))=-t$, hence to $f(t/y(t,z))=-1$, which is
\[ f\left(-e^{\sum_{k>0}\frac{(-1)^k}{k}[y^0]f^k}\right) = -1. \]
This follows from the equation
\[ f\left(te^{\sum_{k>0}\frac{1}{k}[y^0]f^kt^{-k}}\right) = t \]
by mapping $t\mapsto -t$ and then setting $t=1$. By Lemma \ref{lem:inversion}, the function $g(t)$ satisfying $f(g(t))=t$ is given by
\[ g(t) = \sum_{k>0}\frac{1}{k}[y^{-1}]f^kt^{-k}. \]
Hence, the claimed equation follows from
\[ t\exp\left(\sum_{k>0}\frac{1}{k}[y^0]f^kt^k\right) = \sum_{k>0}\frac{1}{k}[y^{-1}]f^kt^k. \]
This follows from Lemma \ref{lem:3} applied to the power series $yf(y)$.
\end{proof}

\begin{corollary}
We have
\[ M_{t^{-1}\vartheta_1}(Q) = 1+\sum_{\beta \in NE(X)} (\beta\cdot D-1)R_{\beta\cdot D-1,1}(X,\beta)Q^\beta. \]
\end{corollary}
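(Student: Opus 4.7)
The plan is to combine Theorem~\ref{thm:mirmap} with the explicit expansion of $\vartheta_1(X,D)_\infty$ provided by Corollary~\ref{cor:theta}, and then perform the change of variables $Q_i=z_i(t/y)^{d_i}$. Nothing more is needed: the two main theoretical inputs (the tropical correspondence giving the $R_{0,(p,1)}$ coefficients of $\vartheta_1$, and the Lagrange-inversion identity identifying $\vartheta_1/y$ with the open mirror map) do all of the real work.

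Concretely, I would first specialize Corollary~\ref{cor:theta} to $q=1$, obtaining
\[
\vartheta_1(X,D)_\infty \;=\; y \;+\; \sum_{p\geq 1}\;\sum_{\beta:\;\beta\cdot D=p+1} p\, R_{0,(p,1)}((X,D),\beta)\, z^\beta\, t^{p+1}\, y^{-p}.
\]
Next, Theorem~\ref{thm:mirmap} asserts $\vartheta_1(t,z,y)_\infty = y\, M_{t^{-1}\vartheta_1}(Q)$ after the substitution $Q_i=z_i(t/y)^{d_i}$. Dividing the displayed expansion by $y$ therefore gives
\[
M_{t^{-1}\vartheta_1}(Q) \;=\; 1 \;+\; \sum_{p\geq 1}\;\sum_{\beta:\;\beta\cdot D=p+1} p\, R_{0,(p,1)}((X,D),\beta)\, z^\beta\, t^{p+1}\, y^{-(p+1)}.
\]

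Finally, I would reindex by $p=\beta\cdot D-1$ and use the identity $z^\beta (t/y)^{\beta\cdot D}=Q^\beta$, which for $\beta\cdot D=p+1$ is precisely $z^\beta t^{p+1} y^{-(p+1)}$. This collapses the double sum into a single sum over effective curve classes $\beta$ with $\beta\cdot D\geq 2$, yielding the claimed formula
\[
M_{t^{-1}\vartheta_1}(Q) \;=\; 1 \;+\; \sum_{\beta\in NE(X)} (\beta\cdot D-1)\, R_{0,(\beta\cdot D-1,1)}((X,D),\beta)\, Q^\beta,
\]
where classes with $\beta\cdot D<2$ contribute trivially (no such $2$-marked invariants exist).

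There is essentially no obstacle: the content is already packaged in Theorem~\ref{thm:mirmap} and Corollary~\ref{cor:theta}. The only bookkeeping point worth checking is that the exponent matching $\beta\cdot D = p+1$ is exactly what is needed for the monomial $z^\beta t^{p+1} y^{-(p+1)}$ to become $Q^\beta$, so that the change of variables absorbs all of the $t$- and $y$-dependence and the resulting expression really is a power series in the $Q_i$ alone. Since $M_W(Q)$ is manifestly a power series in $Q$ by its definition in \S\ref{S:mirmap}, this consistency check is automatic and the corollary follows.
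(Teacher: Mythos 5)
Your proposal is correct and matches the paper's own proof, which is literally ``Combine Theorem \ref{thm:mirmap} and Corollary \ref{cor:theta}''; you have simply written out the specialization to $q=1$, the division by $y$, and the reindexing $p=\beta\cdot D-1$ that the paper leaves implicit. No further comment is needed.
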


\begin{proof}
Combine Theorem \ref{thm:mirmap} and Corollary \ref{cor:theta}.
\end{proof}

\section{Mutations} 
\label{S:mutation}

Mutations of Laurent polynomials were introduced in \cite{FZ}\cite{ACGK} and related to mirror symmetry in \cite{KNP}\cite{ACH}\cite{CKPT}.

\begin{definition}
Let $N\simeq\mathbb{Z}^2$ be a lattice and let $w\in M$ be a primitive vector in the dual lattice. Then $w$ induces a grading of $\mathbb{C}[N]$. Let $a\in\mathbb{C}[w^\bot\cap N]$ be a Laurent polynomial in the zeroth piece of $\mathbb{C}[N]$, where $w^\bot\cap N=\{v \in N \ | \ w(v)=0\}$. The pair $(w,a)$ defines an automorphism of the Laurent polynomial ring $\mathbb{C}(N)$ by
\[ \mu_{w,a} : \mathbb{C}(N) \rightarrow \mathbb{C}(N), \quad x^v \mapsto x^va^{w(v)}. \]
Let $f\in\mathbb{C}[N]$. We say $f$ is \emph{mutable with respect to $(w,a)$} if $\text{mut}_{w,a}(f) \in \mathbb{C}[N]$, in which case we call $\text{mut}_{w,a}(f)$ a \emph{mutation} of $f$.
\end{definition}

\begin{remark}
The \emph{Laurent phenomenon} of \cite{ACGK} in our case means that in any chamber inside $\Delta^\star$ the potential $\vartheta_1(X,D)$ is a Laurent polynomial and mutable with respect to its boundary walls, so that it stays a Laurent polynomial after mutation.
\end{remark}

\begin{proposition}
\label{prop:crossmut}
The wall crossing morphisms $\theta_{\mathfrak{d}}$ are equal to mutations $\text{mut}_{w,a}$ with $w=n_{\mathfrak{d}}$ and $a=f_{\mathfrak{d}}$.
\end{proposition}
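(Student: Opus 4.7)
The plan is to prove this by direct comparison of definitions, after first identifying the correct ambient lattice and weight vector. The wall-crossing morphism $\theta_{\mathfrak{d}}$ was defined on (a localization of) the completion $R_x = \varprojlim \mathbb{C}[P_x]/(t^k)$, where $P_x \subset \Lambda_{B,x} \oplus \mathbb{Z}$, while a mutation $\mathrm{mut}_{w,a}$ is defined on a Laurent ring $\mathbb{C}(N)$. To match the two, I would work with the lattice $N := \Lambda_{B,x} \oplus \mathbb{Z}$, in which $t = z^{(0,1)}$, and take the weight vector to be $w := (n_{\mathfrak{d}}, 0) \in N^\vee$. Primitivity of $n_{\mathfrak{d}}$ implies primitivity of $w$, so this is a valid mutation direction.

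Next I would check that $f_{\mathfrak{d}}$ is valid data for the mutation, i.e.\ that it lies in (a completion of) the zero-graded piece $\mathbb{C}[w^\bot \cap N]$. By definition of $n_{\mathfrak{d}}$ as the primitive normal to $m_{\mathfrak{d}}$, one has
\[
w^\bot \cap N \;=\; \mathbb{Z}\cdot (m_{\mathfrak{d}},0)\,\oplus\,\mathbb{Z}\cdot(0,1),
\]
and since $f_{\mathfrak{d}} \in \mathbb{C}[z^{m_{\mathfrak{d}}}]\llbracket t\rrbracket$, it sits precisely in the $t$-adic completion of $\mathbb{C}[w^\bot \cap N]$. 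Here it is important that both the slab case ($f_{\mathfrak{d}} = 1 + z^{m_{\mathfrak{d}}}$) and the wall case ($f_{\mathfrak{d}} \equiv 1 \bmod t$) produce elements that are invertible in the relevant localization, so that arbitrary integer powers $f_{\mathfrak{d}}^k$ make sense.

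With these identifications the proposition becomes a one-line comparison: on a monomial $z^{(m,h)}$,
\[
\theta_{\mathfrak{d}}\bigl(z^{(m,h)}\bigr)
= z^{(m,h)}\,f_{\mathfrak{d}}^{\langle n_{\mathfrak{d}},\,m\rangle},
\qquad
\mathrm{mut}_{w,\,f_{\mathfrak{d}}}\bigl(z^{(m,h)}\bigr)
= z^{(m,h)}\,f_{\mathfrak{d}}^{\,w(m,h)}
= z^{(m,h)}\,f_{\mathfrak{d}}^{\langle n_{\mathfrak{d}},\,m\rangle},
\]
since $w$ vanishes on the second factor of $N$. Both maps are $\mathbb{C}\llbracket t\rrbracket$-algebra automorphisms, so agreement on monomials extends to agreement on the full (localized, completed) ring.

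The only subtlety, rather than a real obstacle, is bookkeeping about the completion and the sign ambiguity of $m_{\mathfrak{d}}$ in the slab case. The completion is harmless because we never need to rearrange infinite sums—only raise $f_{\mathfrak{d}}$ to an integer power—and the sign ambiguity cancels, since flipping the sign of $m_{\mathfrak{d}}$ flips that of $n_{\mathfrak{d}}$ and leaves the exponent $\langle n_{\mathfrak{d}}, m\rangle$ together with $f_{\mathfrak{d}} = 1 + z^{m_{\mathfrak{d}}}$ consistent. Hence the proposition follows essentially from unwinding the definitions.
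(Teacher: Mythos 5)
Your proposal is correct and matches the paper's argument, which simply states that the proposition follows from the definitions; you have just unwound those definitions explicitly, checking that $\theta_{\mathfrak{d}}(z^m)=z^mf_{\mathfrak{d}}^{\langle n_{\mathfrak{d}},m\rangle}$ coincides with $\mathrm{mut}_{n_{\mathfrak{d}},f_{\mathfrak{d}}}$ once the lattice and grading are identified. The extra bookkeeping about the completion and the sign ambiguity is harmless and consistent with the paper's conventions.
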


\begin{proof}
This follows from the definition.
\end{proof}

\begin{proposition}
\label{prop:laurent}
In any bounded chamber and away from the dense region, $\vartheta_1(X,D)$ is a Laurent polynomial.
\end{proposition}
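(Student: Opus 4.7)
My strategy is to combine a direct finiteness argument for broken lines with the Laurent phenomenon for mutations, using wall crossings to propagate the Laurent property between chambers. I would first handle a base case and then inductively extend.

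For the base case, I would take any bounded chamber $\mathfrak{u}_0$ that is adjacent to an unbounded chamber across a single wall and lies away from the dense region. A broken line $\mathfrak{b}$ with initial monomial $z^{m_{\mathrm{out}}}$ and endpoint $P \in \mathfrak{u}_0$ must travel from the asymptotic region through a ``corridor'' of chambers into $\mathfrak{u}_0$, and away from the dense region only finitely many walls of $\mathscr{S}_\infty(B)$ can meet this corridor. Since each bend of $\mathfrak{b}$ must occur at one of these finitely many walls and the final direction is constrained by the requirement $\mathfrak{b}(0) = P$, the collection of contributing broken lines is finite, giving that $\vartheta_1(X,D)|_{\mathfrak{u}_0}$ has only finitely many ending monomials and is a Laurent polynomial.

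For the inductive step, I would connect an arbitrary bounded chamber $\mathfrak{u}$ away from the dense region to $\mathfrak{u}_0$ by a finite chain of adjacent bounded chambers $\mathfrak{u}_0,\mathfrak{u}_1,\ldots,\mathfrak{u}_n = \mathfrak{u}$, each staying away from the dense region. By Proposition \ref{prop:crossmut} each wall crossing $\mathfrak{u}_i \rightsquigarrow \mathfrak{u}_{i+1}$ is realized by the mutation $\mu_{n_{\mathfrak{d}_i}, f_{\mathfrak{d}_i}}$, and since the walls $\mathfrak{d}_i$ are in the convergent regime (not in the dense region) their functions $f_{\mathfrak{d}_i}$ are Laurent polynomials. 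Invoking the Laurent phenomenon recalled in the remark preceding Proposition \ref{prop:crossmut} (after \cite{ACGK}), each such mutation applied to a Laurent polynomial mutable with respect to the given wall returns another Laurent polynomial. Induction along the chain then yields the claim for $\mathfrak{u}$.

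The main obstacle I expect is making both the ``corridor'' finiteness and the mutability hypothesis precise. For the first, one needs to rule out broken lines that wander far from $\mathfrak{u}$ and potentially brush against the dense region before returning; this should follow from the controlled structure of $\mathscr{S}_\infty(B)$ outside the dense locus, which forces any such wandering line either to acquire unboundedly high $t$-order or to enter the dense region, contradicting the assumption on $\mathfrak{u}$. For the second, mutability at each step should be ensured by the consistency of $\mathscr{S}_\infty(B)$ at every joint on the chosen chamber path (and the resulting global coherence of theta functions built into Proposition \ref{prop:multiplication}), so that the formal denominators introduced by $\mu_{n_{\mathfrak{d}_i}, f_{\mathfrak{d}_i}}$ actually cancel upon substitution of $\vartheta_1|_{\mathfrak{u}_i}$.
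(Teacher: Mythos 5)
The paper states Proposition \ref{prop:laurent} without any proof, so there is no argument to compare yours against directly; the only hint of the intended reasoning is the remark preceding Proposition \ref{prop:crossmut}, which attributes the statement to the Laurent phenomenon of \cite{ACGK} --- essentially your inductive step. Your overall architecture (base chamber plus propagation by wall crossing) is therefore a reasonable reading of the authors' intent, but as written both halves of your argument have genuine gaps.

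In the base case, the issue is not the number of walls meeting a ``corridor'' but the absence of a bound on the $t$-order of the ending monomials. A broken line bends only at walls, and each nontrivial bend raises the $t$-order by at least one since $f_{\mathfrak{d}}\equiv 1 \bmod (t)$; so for each fixed $k$ the finiteness axiom of scattering diagrams already gives finitely many broken lines of $t$-order at most $k$. What you must show is that broken lines ending at $P$ have \emph{uniformly} bounded $t$-order, and your argument never does this. Your proposed resolution --- that a wandering line would ``acquire unboundedly high $t$-order'' --- is not a contradiction but precisely the failure mode you need to exclude: at infinity the theta function genuinely receives contributions of arbitrarily high $t$-order and is an infinite series, and the same heuristic ``constrained final direction'' reasoning would apply there. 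Excluding high-order broken lines in a bounded chamber requires an actual monotonicity or convexity argument for broken-line trajectories (in the spirit of \cite{CPS} or \cite{GHS}), which is the real content of the proposition.

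In the inductive step, invoking the Laurent phenomenon assumes exactly what needs to be proved. The mutation $\mu_{n_{\mathfrak{d}},f_{\mathfrak{d}}}$ sends $z^m$ to $z^m f_{\mathfrak{d}}^{\langle n_{\mathfrak{d}},m\rangle}$, which for $\langle n_{\mathfrak{d}},m\rangle<0$ is an honest infinite series unless cancellation occurs; ``mutability'' of $\vartheta_1|_{\mathfrak{u}_i}$ with respect to the wall is the whole point, and consistency of $\mathscr{S}_\infty(B)$ together with Proposition \ref{prop:multiplication} does not deliver it (in \cite{ACGK} the Laurent phenomenon is a theorem about particular classes of polynomials, not a formal consequence of consistency). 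You also assert without justification that the wall functions $f_{\mathfrak{d}}$ away from the dense region are Laurent polynomials rather than infinite series, which is needed even in the sign $\langle n_{\mathfrak{d}},m\rangle>0$. Note finally that by Definition \ref{defi:broken} each term of $\vartheta_1$ in the new chamber is a single monomial appearing in the expansion of $\theta_{\mathfrak{d}}(a_iz^{m_i})$, so denominators never literally appear; the only question in every chamber is finiteness of the set of contributing broken lines, which again reduces to the $t$-order bound that is missing from your base case.
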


\begin{proposition}
If $f$ and $g$ are mutation equivalent Laurent polynomials, then their classical periods (Definition \ref{defi:pi}) agree: $\pi_f=\pi_g$.
\end{proposition}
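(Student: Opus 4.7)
The approach is to reduce to a single mutation step and then directly verify that the constant-term operator is invariant under it. Since mutation-equivalence means being related by a finite sequence of mutations, by induction it suffices to prove $\pi_W = \pi_{\mu_{w,a}(W)}$ for one mutation $\mu = \mu_{w,a}$. Because $\mu$ is an algebra automorphism of $\mathbb{C}(N)$, it satisfies $\mu(W^k) = \mu(W)^k$ for every $k$. In view of Definition \ref{defi:pi}, it is therefore enough to establish
\[
\const(F) = \const(\mu(F))
\]
for every Laurent polynomial $F$ such that $\mu(F)$ is again a Laurent polynomial (applied to $F = W^k$).

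To prove this identity, I would choose a basis of $N$ so that $w$ is the first dual coordinate, $w(v) = v_1$. In these coordinates $\mu$ fixes $x_2, \ldots, x_n$ and sends $x_1 \mapsto x_1 \cdot a(x_2, \ldots, x_n)$, where $a$ is a Laurent polynomial in $x_2, \ldots, x_n$ because $a \in \mathbb{C}[w^\perp \cap N]$. Expanding $F$ as a finite sum
\[
F = \sum_{k \in \mathbb{Z}} x_1^k\, G_k(x_2, \ldots, x_n),
\]
one has $\mu(F) = \sum_k x_1^k\, a^k\, G_k$. Only the $k=0$ summand contributes to $\const_{x_1}$, and it contributes $G_0$ in both $F$ and $\mu(F)$. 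Hence $\const_{x_1}(F) = \const_{x_1}(\mu(F))$, and taking the constant term in the remaining variables yields the desired equality.

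The main (and quite mild) obstacle is bookkeeping for the negative powers $k < 0$: the factors $a^k$ are rational rather than polynomial, so a priori $\mu(F)$ lies in $\mathbb{C}(N)$ rather than $\mathbb{C}[N]$. However, the hypothesis that $\mu(F)$ is a Laurent polynomial (which is inherited from the mutability of $W$, since $W^k$ is a product of mutable polynomials) guarantees that the cancellations work out, and in any case the extraction of the coefficient of $x_1^0$ does not require expanding $a^{-1}$ — one reads off $G_0$ directly from the expression $\sum_k x_1^k a^k G_k$. Finally, summing the resulting identity $\const(W^k) = \const(\mu(W)^k)$ over all $k > 0$, and iterating along the chain of mutations connecting $f$ and $g$, gives $\pi_f = \pi_g$.
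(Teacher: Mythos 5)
Your proof is correct, but it proceeds by a different route than the paper. The paper rewrites the classical period as a genuine period integral,
\[
\pi_f(z) \;=\; \Bigl(\tfrac{1}{2\pi i}\Bigr)^n \int_{|x_1|=\cdots=|x_n|=1} \frac{1}{1-f}\,\frac{dx_1}{x_1}\cdots\frac{dx_n}{x_n},
\]
and then observes that a mutation is a change of variables preserving the logarithmic volume form, so the integral is unchanged. You instead argue purely algebraically: reduce to a single mutation $\mu_{w,a}$, use that it is a ring automorphism to get $\mu(W^k)=\mu(W)^k$, and then check coefficient by coefficient that the constant term is invariant, since in coordinates adapted to $w$ the map multiplies the $x_1$-degree-$k$ graded piece by $a^k$ and hence fixes the degree-$0$ piece. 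Your observation that extracting the $x_1^0$-coefficient never requires expanding the rational factors $a^k$ for $k<0$ is exactly the right way to sidestep the Laurent-phenomenon bookkeeping. What your approach buys is elementarity: it avoids the (unstated in the paper) verification that the mutation carries the integration torus to a homologous cycle on which the integrand is still holomorphic, which is the one genuinely analytic point in the change-of-variables argument. What the paper's approach buys is conceptual uniformity with the rest of the mirror-symmetry picture, where $\pi_f$ is a solution of a Picard--Fuchs system precisely because it is a period of the form $\frac{dx}{x}/(1-f)$.
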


\begin{proof}
This follows from application of the change-of variables-formula to the period integral
\[ \pi_f(z) = \left(\frac{1}{2\pi i}\right)^n \int_{|x_1|=\ldots=|x_n|=1} \frac{1}{1-f}\frac{dx_1}{x_1}\cdots\frac{dx_n}{x_n}. \]
\end{proof}

\begin{corollary}
If two (not necessarily Fano) log Calabi-Yau pairs $(X,D)$ and $(Y,E)$ have mutation equivalent potentials $\vartheta_1(X,D)_P$ and $\vartheta_1(Y,E)_{P'}$ for some points $P$, $P'$ inside chambers of their respective scattering diagrams, then we have
\[ \pi_{\vartheta_1(X,D)_P} = \pi_{\vartheta_1(Y,E)_{P'}}, \]
hence
\[ G_X(z) = G_Y(z), \]
hence
\[ M_X(Q) = M_Y(Q), \]
hence , if $\beta$ mutates to $\beta'$,
\[ R_{0,(1,\beta\cdot D-1)}((X,D),\beta) = R_{0,(1,\beta'\cdot E-1)}((Y,E),\beta'). \]
Moreover, since all $2$-marked invariants with order $(p,q)$ are determined by the ones with orders $(1,p+q-1)$ (see e.g. \cite{Gra2}, Theorem 2), for all $p,q$ we have
\[ R_{0,(p,q)}((X,D),\beta) = R_{0,(p,q)}((Y,E),\beta'). \]
\end{corollary}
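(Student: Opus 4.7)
The plan is to chain the five claimed equalities by walking up the tower of constructions (theta function $\to$ classical period $\to$ regularized quantum period $\to$ open mirror map $\to$ $2$-marked invariants), invoking at each stage one result already established earlier in the paper.

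First I would handle the equality of classical periods. Since $P$ and $P'$ lie inside chambers of the scattering diagrams, Proposition \ref{prop:laurent} forces the theta functions there to be genuine Laurent polynomials rather than mere formal series, placing us in the setting of the previous proposition: mutation-equivalent Laurent polynomials have equal classical periods. Hence
\[ \pi_{\vartheta_1(X,D)_P}(z) = \pi_{\vartheta_1(Y,E)_{P'}}(z). \]
The constant-term operator kills the global $t$-grading by Lemma \ref{lem:homog}, so the same equality transfers to $t^{-1}\vartheta_1$, and Theorem \ref{thm:theta} then yields $G_X(z)=G_Y(z)$ under the implicit identification of $z$-variables induced by the mutation.

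Next I would pass from periods to mirror maps. Every ingredient in Definition \ref{defi:mirmap}---the antiderivative $a_W=-\const(\log(1-W))$, the exponents $d_i=\beta_i\cdot D$, and the composition inverse---depends on the potential only through data that is invariant under mutation: a mutation $\mu_{w,a}$ is a monomial substitution in the $x$-variables preserving the constant term, and on the lattice of curve classes it acts by a transformation preserving intersection with the anticanonical class. Therefore $M_X(Q)=M_Y(Q)$ under the induced identification of $Q$-variables, and coefficient-by-coefficient comparison using the corollary after Theorem \ref{thm:mirmap} gives
\[ R_{0,(1,\beta\cdot D-1)}((X,D),\beta) = R_{0,(1,\beta'\cdot E-1)}((Y,E),\beta') \]
whenever $\beta$ corresponds to $\beta'$ under the mutation. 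The extension to arbitrary $(p,q)$ is then a formal consequence of \cite{Gra2}, Theorem 2, which expresses every $R_{0,(p,q)}$ in terms of the single-tangency invariants $R_{0,(1,p+q-1)}$.

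The main obstacle, which is bookkeeping rather than calculational, is pinning down the correspondence $\beta\leftrightarrow\beta'$ between curve classes induced by a mutation of potentials, and verifying that under this correspondence $\beta\cdot D=\beta'\cdot E$ and the monomial $Q^\beta$ matches $Q^{\beta'}$ under the induced substitution of $z$-variables. Concretely I would describe how a mutation $\mu_{w,a}$ on $\mathbb{C}(N)$ dualizes to a piecewise linear transformation of the dual intersection complex, read off the induced action on $NE(X)$ from the cocycle description of curve classes in Section \ref{S:class}, and check that this action preserves both the anticanonical intersection number and the pairing with $z$. Once this identification is made precise, each of the three successive equalities of periods, quantum periods, and mirror maps reduces to a direct citation, and the statement about $2$-marked invariants follows by coefficient matching together with the reduction theorem of \cite{Gra2}.
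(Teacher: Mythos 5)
Your proposal is correct and follows exactly the chain the paper intends: the preceding proposition on mutation-invariance of classical periods gives the first equality, Theorem \ref{thm:theta} (with Lemma \ref{lem:homog}) upgrades it to $G_X=G_Y$, the mirror maps depend only on the period so $M_X=M_Y$, and the $2$-marked invariants follow by coefficient comparison with the corollary after Theorem \ref{thm:mirmap} plus the reduction to $(1,p+q-1)$ from \cite{Gra2}. The paper leaves this corollary without an explicit proof precisely because it is this direct chain of citations, so your write-up (including the sensible flag about tracking the curve-class correspondence $\beta\leftrightarrow\beta'$) matches the intended argument.
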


\begin{definition}
A lattice polytope $\Delta$ is \emph{Fano} if all vertices of $\Delta$ are primitive lattice points and the origin is contained in the interior of $\Delta$.
\end{definition}

\begin{proposition}[\cite{ACGK}, Proposition 2]
The mutation of a Fano polytope is again a Fano polytope.
\end{proposition}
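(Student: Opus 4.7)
The plan is to prove this using the combinatorial (as opposed to polynomial) description of mutation. Given a Fano polytope $\Delta\subset N_\R$ and the data of a mutation $(w,a)$ with $w\in M$ primitive and $a$ in the zeroth graded piece of $\C[N]$, let $F\subset w^\perp_\R$ denote the Newton polytope of $a$. The combinatorial mutation produces a new polytope $\Delta'$ whose height-$h$ slice $\Delta'_h:=\Delta'\cap w^{-1}(h)$ is the Minkowski sum $\Delta_h+hF$ for $h\geq 0$ and the Minkowski difference $\Delta_h\ominus |h|F$ for $h<0$, where $\Delta_h:=\Delta\cap w^{-1}(h)$. Mutability of the defining Laurent polynomial of $\Delta$ under $\mu_{w,a}$ is precisely the condition that these Minkowski differences exist as lattice polytopes. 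A preliminary step is to check that this polytope-level recipe is compatible with the polynomial mutation, in the sense that $\mu_{w,a}$ sends the character support of $\Delta$ bijectively onto that of $\Delta'$.

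I would then verify the two Fano conditions separately. For the origin: the slice at $h=0$ is untouched, since the rule gives $\Delta'_0=\Delta_0+0\cdot F=\Delta_0$; hence $0\in\mathrm{relint}(\Delta_0)=\mathrm{relint}(\Delta'_0)$. Because $0$ lies in the interior of $\Delta$, the slices $\Delta_{\pm h}$ for all small $h>0$ contain an open neighborhood in $w^\perp_\R$ of the projection of the origin. Minkowski summation with $hF$ only enlarges such a neighborhood, while the Minkowski difference still contains one for $h$ small enough, because $F$ is bounded and mutability forces $\Delta_{-h}\supseteq hF$ plus a residual neighborhood. Therefore $0\in\mathrm{int}(\Delta')$.

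For primitivity of vertices, I would use the fact that $\mu_{w,a}$ admits a realization as an integral piecewise-linear transformation of $N_\R$. Pick any lattice vertex $f$ of $F$ and define the shear $\sigma_f:v\mapsto v+w(v)\,f$; writing $w$ as a coordinate direction shows $\sigma_f\in GL(N,\Z)$, so it sends primitive lattice vectors to primitive lattice vectors. On the half-space $\{w\geq 0\}$ the mutation, restricted to the appropriate Minkowski summand of each slice, coincides with $\sigma_f$, and on $\{w\leq 0\}$ with $\sigma_f^{-1}$. Any vertex of $\Delta'$ is therefore either the image under one of these shears of a vertex of $\Delta$, or lies on the interface $\{w=0\}=\Delta_0=\Delta'_0$ and is already a vertex of $\Delta$; either way it is a primitive lattice point.

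The main obstacle is the bookkeeping in the last step when $F$ has more than one vertex: the polytope $\Delta'$ is then genuinely larger than the image of any single shear $\sigma_f$, so one must argue that every vertex of $\Delta'$ is nevertheless of the form $v+hf$ with $v\in\mathrm{vert}(\Delta_h)$ and $f\in\mathrm{vert}(F)$ (for the $h\geq 0$ part, with the symmetric statement on the other side). This is a standard fact about vertices of Minkowski sums of lattice polytopes; combined with the hypothesis that each vertex of $F$ is a lattice point and each vertex of $\Delta$ is primitive, it yields primitivity of every vertex of $\Delta'$, completing the proof.
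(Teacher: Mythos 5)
The paper does not actually prove this statement; it is imported by citation from \cite{ACGK}, Proposition 2, and your combinatorial strategy is essentially the one used there: verify the two Fano conditions separately using the height--slice description of the mutated polytope and the unimodular shears $\sigma_f(v)=v+w(v)f$. The origin part of your argument is fine, and can even be shortened: one checks $\Delta'\cap w^{-1}(0)=\Delta_0$, and since $\Delta'$ contains points at strictly positive and strictly negative heights, $0\in\mathrm{relint}(\Delta_0)$ already forces $0\in\mathrm{int}(\Delta')$, without any estimate on the Minkowski differences $\Delta_{-h}\ominus hF$ (which, in the convention of \cite{ACGK}, need not contain a neighborhood of the origin at all if $\Delta$ has no vertices at that height). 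Note also that your claim that summation with $hF$ ``only enlarges'' tacitly assumes $0\in F$; for general $a$ one first translates $F$ by a vertex, which changes the mutation only by a shear in $GL(N,\mathbb{Z})$, so this is harmless but should be said.

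The genuine gap is in the primitivity step. The standard fact $\mathrm{vert}(A+B)\subseteq\mathrm{vert}(A)+\mathrm{vert}(B)$ only tells you that a vertex of the height-$h$ piece has the form $v+hf$ with $v$ a vertex of the \emph{slice} $\Delta_h$, and vertices of slices of a lattice polytope are in general neither vertices of $\Delta$ nor even lattice points (an edge of $\Delta$ spanning lattice heights $0$ to $3$ meets height $1$ at a third-integral point). So ``$v\in\mathrm{vert}(\Delta_h)$ together with primitivity of the vertices of $\Delta$'' does not yield primitivity, or even integrality, of $v+hf$. What you actually need --- and what carries the whole proof --- is that every vertex of $\Delta'=\mathrm{conv}\bigl(\bigcup_h\Delta'_h\bigr)$ equals $\sigma_f(v)$ for a genuine vertex $v$ of $\Delta$ and a lattice vertex $f$ of $F$. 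Proving this requires showing that when $v$ lies in the relative interior of a face of $\Delta$ crossing height $h$, the point $v+hf$ is a convex combination of appropriately sheared points in other slices of $\Delta'$ and hence not a vertex; on the negative-height side this absorption uses the defining inclusion $\mathrm{conv}(\mathrm{vert}(\Delta)\cap w^{-1}(h))\subseteq Q_h+|h|F$ and is not a statement about Minkowski sums at all, since those pieces are Minkowski differences. This absorption lemma is the real content of \cite{ACGK}, Proposition 2; as written, your argument asserts it rather than proves it.
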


\begin{conjecture}
\label{conj:model}
In any bounded chamber of the scattering diagram, the Newton polytope of $\vartheta_1(X,D)$ is a Fano polytope $\overline{\Delta}^\star$ and $X$ admits a $\mathbb{Q}$-Gorenstein degeneration to the toric variety $X_\Sigma$ defined by the spanning fan $\Sigma$ of the Newton polytope of $\overline{\Delta}^\star$. In particular, two (not necessarily Fano) varieties are $\mathbb{Q}$-deformation equivalent if and only if their theta functions $\vartheta_1$ are mutation equivalent.
\end{conjecture}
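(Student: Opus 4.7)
The plan is to propagate information from a seed chamber through the scattering diagram via wall crossing, combining three ingredients: the mutation interpretation of wall crossing (Proposition \ref{prop:crossmut}), the Laurent phenomenon (Proposition \ref{prop:laurent}) which guarantees $\vartheta_1(X,D)$ is a genuine Laurent polynomial in each bounded chamber, and the theorem of \cite{ACGK} that mutation preserves the Fano property of a lattice polytope, together with its interpretation via $\mathbb{Q}$-Gorenstein toric deformations \cite{Ilten}, \cite{KNP}.

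First I would fix a seed bounded chamber $\mathfrak{u}_0$ in which the Newton polytope of $\vartheta_1(X,D)_{\mathfrak{u}_0}$ is manifestly Fano. When $X$ admits a $\mathbb{Q}$-Gorenstein degeneration to a toric Fano variety $X_\Sigma$, a point in the interior of the spanning polytope $\Delta^\star$ lies in such a chamber: only straight broken lines reach it, one per ray of $\Sigma$, so $\vartheta_1$ recovers the Hori-Vafa potential $W_\Sigma$ and its Newton polytope is the Fano polytope $\Delta^\star$. From there, induction on the combinatorial distance to $\mathfrak{u}_0$ takes over: crossing an interior wall $\mathfrak{d}$ transforms $\vartheta_1$ by the mutation $\mathrm{mut}_{n_{\mathfrak{d}},f_{\mathfrak{d}}}$, the Laurent phenomenon certifies that the output is again a Laurent polynomial, and the ACGK result upgrades the next chamber's Newton polytope to a Fano polytope. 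Concatenating these one-step mutations along a path of chambers yields the first assertion of the conjecture, and by the toric-deformation interpretation of mutation it simultaneously produces a chain of $\mathbb{Q}$-Gorenstein deformations connecting the toric Fano varieties $X_{\Sigma(\mathfrak{u})}$ attached to the bounded chambers $\mathfrak{u}$.

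To link this chain of toric deformations back to $X$ itself I would use the quantum period. By Theorem \ref{thm:theta}, $\pi_{\vartheta_1(X,D)_{\mathfrak{u}}} = G_X$ in every chamber $\mathfrak{u}$, and the classical period is mutation invariant, so $G_X = G_{X_{\Sigma(\mathfrak{u})}}$ for every bounded chamber; the Fanosearch principle that the regularized quantum period characterizes the $\mathbb{Q}$-deformation class then identifies $X$ with the common $\mathbb{Q}$-deformation family of the $X_{\Sigma(\mathfrak{u})}$. For the final biconditional, the forward direction is essentially an invariance statement: $(X,D)$ deforms flatly in a $\mathbb{Q}$-Gorenstein family, logarithmic Gromov-Witten invariants are deformation invariant, and hence so is $\vartheta_1$ up to the canonical mutations across chambers; the converse is the content of the first half of the statement, applied to both varieties simultaneously.

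The main obstacle is this last quantum-period step: upgrading equality of regularized quantum periods to an honest $\mathbb{Q}$-deformation equivalence of the underlying varieties is the open Fanosearch conjecture and is not currently available in full generality. A secondary difficulty is controlling primitivity of the vertices of the Newton polytope of $\vartheta_1$ in chambers far from the seed, since a mutation could in principle produce coefficients that shorten a vertex lattice length; ruling this out requires explicit tropical bookkeeping of the broken-line contributions to extremal monomials, using the correspondences of \S\ref{S:tropical}. For the setting of the paper, namely surfaces of small Picard rank and Hirzebruch-type families with their toric models, both steps appear tractable by direct computation, which would at least establish the conjecture in those cases.
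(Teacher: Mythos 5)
The statement you are proving is labelled a \emph{conjecture} in the paper: no proof is given there, only a verification for Hirzebruch surfaces in \S\ref{S:hirz} (via the explicit chain of mutations relating $\vartheta_1(\mathbb{F}_m)_{\text{CPS}}$ to $\vartheta_1(\mathbb{F}_{m-2})_{\text{CPS}}$) and further examples in \S\ref{S:Bl}. So your proposal cannot be matched against a proof in the paper, and it should be judged on its own terms. Your overall strategy --- wall crossing equals mutation (Proposition \ref{prop:crossmut}), the Laurent phenomenon (Proposition \ref{prop:laurent}), and the ACGK theorem that mutation preserves Fano polytopes --- is exactly the circle of ideas the authors have in mind, and your honest flagging of the Fanosearch step is appropriate: upgrading $G_X = G_{X_{\Sigma(\mathfrak{u})}}$ to a $\mathbb{Q}$-Gorenstein deformation equivalence is an open conjecture, so your argument is a conditional reduction, not a proof.

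There is, however, a gap beyond the two you acknowledge, and it sits at the base case of your induction. You seed the argument in a chamber where ``only straight broken lines reach it, one per ray of $\Sigma$, so $\vartheta_1$ recovers the Hori-Vafa potential $W_\Sigma$ and its Newton polytope is the Fano polytope $\Delta^\star$.'' That description is valid only when $X$ admits a $\mathbb{Q}$-Gorenstein degeneration to a toric \emph{Fano} variety, i.e. when $\Delta^\star$ is convex. The conjecture's real content is the complementary case --- e.g. $dP_2=\text{Bl}^7\mathbb{P}^2$ in \S\ref{S:Bl}, which the paper notes admits no Fano or semi-Fano toric model. There the spanning polytope is non-convex, every bounded chamber's theta function already carries broken-line corrections, and the claim that the Newton polytope of the \emph{corrected} potential is a (convex, primitive-vertex) Fano polytope is precisely what must be established from scratch; your induction has no seed to propagate from. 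Relatedly, Proposition \ref{prop:laurent} is itself stated in the paper without proof and only ``away from the dense region,'' whereas the conjecture quantifies over all bounded chambers, so even the Laurentness input is not secured in the generality you need. Finally, in the forward direction of the biconditional, deformation invariance of log Gromov--Witten invariants along a $\mathbb{Q}$-Gorenstein family is delicate when the family passes through singular fibers, and equality of invariants does not by itself produce the explicit mutation relating the two scattering diagrams --- that identification is what the paper carries out by hand for Hirzebruch surfaces in Figures \ref{fig:relation} and \ref{fig:relation2}. In short: your outline is a sensible reduction of the conjecture to (i) Fanosearch, (ii) the Laurent phenomenon in all bounded chambers, and (iii) a base-case Fano-polytope statement in some chamber, but (iii) is not addressed and is not easier than the conjecture itself outside the Fano-toric-model case.
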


We will see in \S\ref{S:hirz} that Conjecture \ref{conj:model} holds for Hirzebruch surfaces.

\section{Comparison with local/open MS and PF equations} 

As explained in \S\ref{S:GWlocal} and \S\ref{S:GWopen} the total space of the canonical bundle $K_X$ is a Calabi-Yau threefold. From the Mori vectors that describe relations between the rays in the fan of $K_X$, one can construct a Picard-Fuchs type differential equation. If $X$ is Fano, then the period $G_X(z)=\pi_W(z)$ is the holomorphic part of a logarithmic solution to this differential equation. Surprisingly, this still holds in the semi-Fano case, although the combinatorics change slightly. We show for the example $\mathbb{F}_3$ that this doesn't hold in the non-semi-Fano case.

To construct the spanning polytope for the canonical bundle $K_{\YD}$, we take the cone over $\Dp$ by placing $\Dp$ in a hyperplane of height 1 above the origin in $\R^{n+1}$
\[ \text{Cone}(\Dp)=\left\{(c\nu,c) \,|\, \nu\in\Dp \,,\,c\in\R_{\ge0} \right\}\subset \R^n\times\R \]
Let $v_i\in\Dp$ be the vertices of the spanning polytope and set $\overline{v}_i=(v_i,1)\in\Dp\times\{1\}\subset\text{Cone}(\Dp)$.
Define the vectors $\{\ell^{(a)}\}_{a=1}^{h^{1,1}(\PD)}$ from the linear relations of the $\overline{v}_i$
\[ \sum_{i=0}^r \ell^{(a)}_i\overline{v}_i=0 \]
where $v_0=(0,\dots,0)\in\R^{n}$.
By including $v_0$ we are restricting to the anticanonical hypersurface $X\hookrightarrow\PD$, and the Calabi-Yau condition is equivalent to $\sum_i\ell^{(a)}_i=0$.
Then $\ell^{(a)}$ are the generators of the Mori cone, which is dual to the K\"{a}hler cone of $\PD$.
The \textit{Mori vectors} $\ell^{(a)}$ allow us to determine the Picard--Fuchs operators
\[ \L_a=\prod_{\ell_i^{(a)}>0}\prod_{j=0}^{\ell^{(a)}_i-1}\left(\sum_{b=1}^s\ell^{(b)}_i\theta_b-j\right)-z_a\prod_{\ell_i^{(a)}<0}\prod_{j=0}^{-\ell^{(a)}_i-1}\left(\sum_{b=1}^s\ell^{(b)}_i\theta_b-j\right) \]
where $\theta_a=z_a\partial_{z_a}$ is the logarithmic derivative with respect to $z_a$.

\begin{proposition}
For each $a$, define
\[ \pi_a(z) = \sum_\beta \frac{(-1)^{\beta\cdot D_i}(-(\beta\cdot D_i)-1)!}{\prod_{j\neq i} (\beta \cdot D_j)!}z^\beta \]
Then $\L_i(\log(z_a)+\pi_a(z)) = 0$ for all $i$.
\end{proposition}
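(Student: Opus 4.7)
The plan is to apply the Frobenius method to the GKZ-type system defined by the Mori vectors $\ell^{(b)}$. First, I would introduce the deformed period
\[ \Pi(z;\rho) = \sum_{n \in \Z_{\geq 0}^s} \frac{z^{\rho+n}}{\prod_{j}\Gamma\bigl(1+\sum_b \ell_j^{(b)}(\rho_b+n_b)\bigr)}, \]
with formal parameters $\rho = (\rho_1, \dots, \rho_s)$, where $z^{\rho+n} = \prod_b z_b^{\rho_b + n_b}$ and $j$ runs over all ray indices. The Calabi--Yau relation $\sum_j \ell_j^{(b)} = 0$ ensures the series is formally well-defined, and applying the Gamma reflection identity $\Gamma(1+x)\Gamma(-x) = -\pi/\sin(\pi x)$ to the one factor whose pairing is negative on the effective cone converts the reciprocal Gamma into the numerator factorial $(-(\beta \cdot D_i)-1)!$ together with the sign $(-1)^{\beta \cdot D_i}$ appearing in the statement.

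Next, using only $\Gamma(x+1) = x\Gamma(x)$, I would verify the GKZ identity $\L_i\,\Pi(z;\rho) = Q_i(\rho)\,\Pi(z;\rho)$. The operator $\L_i$ is designed so that each factor of the positive piece $P_i^+(\theta)$ matches, after the Gamma shift induced on $\Pi(z;\rho)$ by $\theta_b \mapsto \rho_b + n_b$, a corresponding factor of the negative piece $z_i P_i^-(\theta)$ acting on the shifted summand $n \mapsto n + e_i$. The residual polynomial $Q_i(\rho)$ vanishes together with its first $\rho_a$-derivatives at $\rho = 0$; this is the Calabi--Yau ``second-order vanishing'' responsible for the existence of single-log solutions. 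Differentiating $\L_i\Pi = Q_i\Pi$ in $\rho_a$ and setting $\rho = 0$ then gives
\[ \L_i\bigl(\partial_{\rho_a}\Pi(z;\rho)|_{\rho=0}\bigr) = 0. \]
Since $\partial_{\rho_a}z^{\rho+n}|_{\rho=0} = \log(z_a)\,z^n$ and in this normalization the fundamental period satisfies $\Pi(z;0) = 1$, the expression on the left becomes exactly $\log(z_a) + \pi_a(z)$ once Gamma reflection is applied to produce the coefficients stated in the proposition.

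The main obstacle I anticipate is the careful verification of the second-order vanishing of $Q_i(\rho)$, which requires tracking Gamma-shift cancellations across all positive/negative factor pairings in $\L_i$. A backup route, should that bookkeeping become cumbersome, is a direct termwise check: use the identity $P(\theta)\log(z_a) = P(0)\log(z_a) + (\partial_a P)(0)$ (noting $P_i^\pm(0) = 0$ since each factor of $P_i^\pm$ contains a $j = 0$ term) to reduce $\L_i\log(z_a)$ to a constant plus a multiple of $z_i$, and then verify that the coefficients of $\pi_a(z)$ satisfy the two-term inhomogeneous recurrence forced by $\L_i\pi_a = -\L_i\log(z_a)$. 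This reduces to a rising-factorial identity that is elementary given the Calabi--Yau relation $\sum_j \ell_j^{(b)} = 0$.
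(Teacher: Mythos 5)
The paper offers no proof of this proposition; the only justification is the worked $K_{\F_3}$ computation in Example \ref{eg:KF3ClosedMM}, which is essentially your backup route (solve the recurrences forced by $\L_b F_a=0$, then fix the linear combination by matching the lowest-order terms of $\L_b S_a=-\L_b\log z_a$). Your primary Frobenius/GKZ route is the standard systematic version of that and is the right idea, but one step is misstated and one is a genuine gap. The identity $\L_i\Pi(z;\rho)=Q_i(\rho)\Pi(z;\rho)$ is not structurally correct: after the Gamma-shift cancellation between the positive piece at $n$ and the negative piece at $n-e_i$, the residual term is supported on the boundary $\{n_i=0\}$ of the summation cone and is a sum with $n$-dependent coefficients, not a multiple of $\Pi$. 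What you actually need is that each boundary coefficient vanishes to order at least two at $\rho=0$, i.e.\ that for every $n\geq 0$ with $n_i=0$ at least two of the arguments $\sum_b\ell_j^{(b)}(\rho_b+n_b)-\ell_j^{(i)}$ (for $\ell_j^{(i)}>0$), respectively $\sum_b\ell_j^{(b)}(\rho_b+n_b)$ (otherwise), sit at negative integers when $\rho=0$. In the non-Fano case $\ell_0^{(a)}$ can be positive (here $\ell^{(2)}_0=m-2$), so the usual argument that the zero-section factor always supplies one of these zeros fails, and this condition must be checked against the actual Mori vectors; your proof defers precisely the step that carries the content.

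More seriously, $\partial_{\rho_a}\Pi|_{\rho=0}$ is not $\log z_a$ plus the single series displayed in the proposition. The chain rule distributes the $\rho_a$-derivative over all Gamma factors and yields $\log z_a+\sum_i(-\ell_i^{(a)})\pi^{(i)}(z)$, where $\pi^{(i)}$ denotes the stated series supported on the region $\beta\cdot D_i<0$, $\beta\cdot D_j\geq 0$ for $j\neq i$. This is exactly what the paper's example finds: the log solutions are $S_1=2F_1-F_2$ and $S_2=-F_1+3F_2$, not $F_1$ and $F_2$ themselves. Since each $F_i$ is separately annihilated by every $\L_b$ while $\L_1\log z_1=-2z_1\neq 0$, the expression $\log z_a+\pi^{(i)}$ for a single $i$ is \emph{not} a solution; your claim that the derivative ``becomes exactly $\log(z_a)+\pi_a(z)$'' therefore skips the linear combination that makes the statement true (the proposition's displayed formula, with its index $i$ free on the right-hand side, has the same defect). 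Once you insert the weights $-\ell_i^{(a)}$ and verify the second-order vanishing above, the argument closes.
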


\begin{example}[Closed mirror map for $K_{\F_3}$ from the PF system]
\label{eg:KF3ClosedMM}
For the Hirzebruch surfaces $\mathbb{F}_m$ with $m>2$, we find that the spanning polytope $\Dp_m$ is not reflexive. The vertices satisfy the linear relations
\begin{alignat*}{6}
    \ell^{(1)}=(\ &&-2;\ &&1,\ && 1,\ && 0,\ && 0)\\
    \ell^{(2)}=(\ && -(2-m);\ && -m,\ && 0,\ && 1,\ && 1)
\end{alignat*}
so for $m>2$ we have $\ell^{(2)}_0>0$.

    We use these to construct the Picard--Fuchs operators
    \begin{align*}
        \L_1&=\theta_1(\theta_1-3\theta_2)-z_1(-2\theta_1+\theta_2)(-2\theta_1+\theta_2-1)\\
        \L_2&=(-2\theta_1+\theta_2)\theta_2-z_2(\theta_1-3\theta_2)(\theta_1-3\theta_2-1)(\theta_1-3\theta_2-2)
    \end{align*}
    Let
    \begin{align*}
    	F_a(z_1,z_2)=\sum_{n_1,n_2\ge 0} c^{(a)}_{n_1,n_2}z_1^{n_1}z_2^{n_2}
    \end{align*}
    be power series in $z_1$ and $z_2$. 
    The power series terms $S_a$ of the closed mirror maps $t_a=\log z_a + S_a(z_1,z_2)$ will be written as linear combinations of the $F_a$.
    By imposing $\L_b F_a=0$, we get recurrence relations on the coefficients $c^{(a)}_{n_1,n_2}$.
    \begin{align*}
    	\L_1F_a=0\implies c^{(a)}_{n_1+1,n_2}&=\frac{(2n_1-n_2)(2n_1-n_2+1)}{(n_1+1)(n_1-3n_2+1)}c^{(a)}_{n_1,n_2}\\
    	\L_2F_a=0\implies c^{(a)}_{n_1,n_2+1}&=-\frac{(n_1-3n_2)(n_1-3n_2-1)(n_1-3n_2-2)}{(n_2+1)^2(2n_1-n_2-1)}c^{(a)}_{n_1,n_2}
    \end{align*}
    For the first solution, we solve the first equation with $n_2=0$
    \begin{align*}
    	c^{(1)}_{n_1+1,0}=\frac{2n_1(2n_1+1)}{(n_1+1)^2}c^{(1)}_{n_1,0}\implies c^{(1)}_{n_1,0}=\frac{(2n_1-1)!}{(n_1!)^2}
    \end{align*}
    With similar factorial recursion identities (coming from the properties of the Gamma function), one can substitute $c^{(1)}_{n_1,0}$ into the second equation to solve for $c^{(1)}_{n_1,n_2}$.
    \begin{align*}
    	c^{(1)}_{n_1,n_2}&=(-1)^{n_2}\frac{(2n_1-n_2-1)!}{(2n_1-1)!}\frac{n_1!}{(n_1-3n_2)!}\frac{1}{(n_2!)^2}c^{(1)}_{n_1,0}\nonumber\\
    	&=(-1)^{n_2}\frac{(2n_1-n_2-1)!}{n_1!(n_1-3n_2)!(n_2!)^2}.
    \end{align*}
    For the second solution, we first solve the second equation with $n_1=0$
    \begin{align*}
    	c^{(2)}_{0,n_2+1}=-\frac{3n_2(3n_2+1)(3n_2+2)}{(n_2+1)^3}c^{(2)}_{0,n_2}\implies c^{(2)}_{0,n_2}=(-1)^{n_2}\frac{(3n_2-1)!}{(n_2!)^3}.
    \end{align*}
    After factoring out a minus sign to apply more factorial identities, one can substitute $c^{(2)}_{0,n_2}$ into the first equation to solve for $c^{(2)}_{n_1,n_2}$.
    \begin{align*}
    	c^{(2)}_{n_1,n_2}&=(-1)^{n_1}\frac{(3n_2-n_1-1)!}{(3n_2-1)!}\frac{n_2!}{(n_2-2n_1)!}\frac{1}{n_1!}c^{(2)}_{0,n_2}\nonumber\\
    	&=(-1)^{n_1+n_2}\frac{(3n_2-n_1-1)!}{n_1!(n_2-2n_1)!(n_2!)^2}
    \end{align*}
    One can check that $c^{(1)}_{n_1,n_2}$ and $c^{(2)}_{n_1,n_2}$ solve the recurrence relations.
    Now to obtain the log solutions we note that $\L_a t_b=\L_a(\log z_b+S_b)=0$ and hence $\L_a S_b=-\L_a \log z_b$. We calculate
    \begin{align*}
    		\L_1 S_1&=2z_1  \ && \L_1 S_2=-z_1\\
    		\L_2 S_1&=2z_2  \ && \L_2 S_2=-6z_2
    \end{align*}
    	Looking at the lowest order terms of $F_1$ and $F_2$ we have
    \begin{align*}
    		c^{(1)}_{1,0}z_1&=z_1 \ && c^{(1)}_{0,1}z_2=0 \\
    		c^{(2)}_{1,0}z_1&=0 \ && c^{(2)}_{0,1}z_2=-2z_2.
    \end{align*}
    This fixes $S_1$ and $S_2$
    \begin{align*}
    	S_1=2F_1-F_2 \ && S_2=-F_1+3F_2
    \end{align*}
    where
    \begin{align*}
    	F_1(z_1,z_2)&=\sum_{\substack{n_1,n_2\ge 0\\ n_1\ge 3n_2}} (-1)^{n_2}\frac{\Gamma(2n_1-n_2)}{\Gamma(n_1)\Gamma(n_1-3n_2+1)\Gamma^2(n_2+1)}z_1^{n_1}z_2^{n_2}\\
    	F_2(z_1,z_2)&=\sum_{\substack{n_1,n_2\ge 0\\ n_1\le 3n_2}} (-1)^{n_1+n_2}\frac{\Gamma(3n_2-n_1)}{\Gamma(n_1+1)\Gamma(n_2-2n_1+1)\Gamma^2(n_2+1)}z_1^{n_1}z_2^{n_2}
    \end{align*}
    The $z_1$ parameter is mirror to the fiber class parameter
    \begin{align*}
    	q_1=z_1 e^{S_1}
    \end{align*}
    and the $z_2$ parameter is mirror to the exceptional class parameter
    \begin{align*}
    	q_2=z_2 e^{S_2}
    \end{align*}
\end{example}

\begin{example}[Open mirror map for $K_{\F_3}$ from the PF system]
	We include the additional Mori vector $\ell^{(0)}$ to obtain
	\begin{alignat*}{8}
    \ell^{(1)}=(\ &&-2,\ &&1,\ && 1,\ && 0,\ && 0;\ && 0,\ && 0)\\
    \ell^{(2)}=(\ && -(2-m),\ && -m,\ && 0,\ && 1,\ && 1;\ && 0,\ && 0)\\
    \ell^{(0)}=(\ && 1,\ && 0,\ && 0,\ && -1,\ && 0;\ && -1,\ && 1)
\end{alignat*}
Now the Picard--Fuchs operators are modified
\begin{align*}
        \L_1&=\theta_1(\theta_1-3\theta_2)-z_1(-2\theta_1+\theta_2+\theta_0)(-2\theta_1+\theta_2+\theta_0-1)\\
        \L_2&=(-2\theta_1+\theta_2-\theta_0)(\theta_2-\theta_0)-z_2(\theta_1-3\theta_2)(\theta_1-3\theta_2-1)(\theta_1-3\theta_2-2)\\
        \L_0&=(\theta_0-2\theta_1+\theta_2)\theta_0+z_0(\theta_2-\theta_0)\theta_0
    \end{align*}
    The log solutions $t_1,t_2$ still solve this open-closed Picard--Fuchs system since $S_1,S_2$ are only functions of the closed string moduli and hence $\theta_0 t_a=0$. 
    However, we have an additional solution $t_0=\log z_0 + S_0(z_1,z_2)$, and we solve for $S_0$ in terms of $F_1$ and $F_2$ the same way we solved for $S_1$ and $S_2$.
    Namely, we compute $\L_b S_0 = -\L_b \log z_0$ and choose the appropriate linear combination which reproduces this lowest order term. 
    \begin{equation*}
    \left.
    \begin{aligned}
    	\L_1 S_0 &= -\L_1\log z_0 = -z_1\\
    \L_2 S_0 &= -\L_2\log z_0 = 0
    \end{aligned}\right\}\implies S_0=-F_1
    \end{equation*}
    Hence we can compute the open mirror map
    \begin{align*}
    	t_0=z_0e^{S_0}
    \end{align*}
\end{example}

\begin{table}[h!]
\centering
\begin{tabular}{ ||c || c c c c c c c c c c||} 
 \hline
 $c_{k_1,k_2}$ & 0 & 1 & 2 & 3 & 4 & 5 & 6 & 7 & 8 & 9\\ [0.5ex] 
 \hline\hline

 0 & 1 & 1 & 0 & 0 & 0 &0&0&0&0&0\\ 
 \hline
 1 & 0 & $-2$ & $-2$ & $-4$ & $-6$ & $-8$ & $-10$ & $-12$ &$-14$ &$-16$  \\
 \hline
 2 & 0 & 5 & 8 & 9 & 20 & 56 & 162 & 418 & 948 & 3621   \\
 \hline
 3 & 0 & $-32$ & $-70$ &  $-96$ & $-140$ & $-300$ & $-768$ & $-2220$ & $-6756$ & $-20440$ \\
 \hline
\end{tabular}
\caption{The numbers obtained from the Picard-Fuchs system for $K_{\F_3}$. Here $k_F$ runs horizontally and $k_E$ runs vertically.}
\label{tab:F3invLM}
\end{table}

These numbers to not agree with the $2$-marked invariants below, obtained from the mirror maps defined by the corrected potential $\vartheta_1(\mathbb{F}_3)$, which are equal to the $2$-marked invariants of $\mathbb{F}_1$ after a class shift $E\mapsto E-F$. This show that the method with Picard-Fuchs equations does not work in the non-semi-Fano case.

\begin{table}[h!]
\centering
\begin{tabular}{ ||c || c c c c c c c c c c||} 
 \hline
 $c_{k_1,k_2}$ & 0 & 1 & 2 & 3 & 4 & 5 & 6 & 7 & 8 & 9\\ [0.5ex] 
 \hline\hline

 0 & 1 & 1 & 0 & 0 & 0 &0&0&0&0&0\\ 
 \hline
 1 & 0 & 0 & $-2$ & $-4$ & $-6$ & $-8$ & $-10$ & $-12$ &$-14$ &$-16$  \\
 \hline
 2 & 0 & 0 & 0 & 0 & 5 & 35 & 135 & 385 & 910 & 1890   \\
 \hline
 3 & 0 & 0 & 0 &  0 & 0 & 0 & $-32$ & $-400$ & $-2592$ & $-11760$ \\
 \hline
\end{tabular}
\caption{The $2$-marked Gromov-Witten invariants for $\F_3$. Here $k_F$ runs horizontally and $k_E$ runs vertically. These are the same as for $\F_1$ but with $q_1\mapsto q_1q_2$.}
\label{tab:F3inv}
\end{table}

\part{Examples} 

\section{Hirzebruch surfaces} 
\label{S:hirz}

Recall the Hirzebruch surfaces $\mathbb{F}_m$ from Example \ref{eg:Hirz}. The complex parameters $z_1$ and $z_2$ correspond to the class of a fiber $F$ and of the exceptional section $E$, respectively. Let $D$ be a smooth anticanonical divisor, i.e., of class $(m+2)F+2E$. We compute the potentials $\vartheta_1((\mathbb{F}_m,D))_P$ for points $P$ inside different chambers of the scattering diagram. Consider the fan $\Sigma$ of $\mathbb{F}_m$ generated by $(1,0)$, $(0,1)$, $(-1,0)$ and $(-m,-1)$. 

We omit the variables $z_i$, i.e. the curve classes. One can find the curve class corresponding to a broken line by shifting the endpoint to infinity and completing the broken lines to a tropical curve, as in Proposition \ref{prop:complete}. Then one can calculate the curve class of the tropical curve by intersection with tropical cocycles.

\subsection{Relating scattering diagrams of Hirzebruch surfaces} 

We know from Example \ref{eg:Hirz} that the Hirzebruch surfaces $\mathbb{F}_m$ for $m$ even (resp. odd) are all $\mathbb{Q}$-Gorenstein deformation equivalent. This implies that their theta functions are mutation equivalent, and that their scattering diagrams are related to each other, such that they can be seen as different initial chambers inside one and the same scattering diagram. Here we describe precisely how this relation works. Start with the scattering diagram of $\mathbb{F}_m$, for $m=3$ see Figure \ref{fig:relation}. Move the affine singularity whose initial rays have slope $1/(m-1)$ all the way to the right, applying the monodromy transformation of the affine singularity whose initial rays have slope $1/(m+1)$. This transformation is
\[ \begin{pmatrix}-m&(m+1)^2\\-1&m+2\end{pmatrix}\begin{pmatrix}m-1\\1\end{pmatrix}=\begin{pmatrix}-(m+3)\\-1\end{pmatrix}. \]
Hence, the initial rays with slope $1/(m-1)$ become the bottom rays with slope $1/(m+3)$ of the scattering diagram for $\mathbb{F}_{m+2}$. At the same time, move the affine singularity whose initial rays have slope $1/(m+1)$ to the left of the initial ray with slope $-1$.

\begin{figure}[h!]
\centering
\begin{tikzpicture}[scale=1]
\draw (1,0) -- (0,1) -- (-1,0) -- (-3,-1) -- cycle;
\coordinate[fill,cross,inner sep=2pt,rotate=45] (0) at (.5,.5);
\coordinate[fill,cross,inner sep=2pt,rotate=45] (1) at (-.5,.5);
\coordinate[fill,cross,inner sep=2pt,rotate=18.43] (2) at (-2,-.5);
\coordinate[fill,cross,inner sep=2pt,rotate=11.31] (3) at (-1,-.5);
\draw[dashed] (.5,2) -- (0) -- (2,.5);
\draw[dashed] (-.5,2) -- (1) -- (-4,.5);
\draw[dashed] (-4,-.5) -- (2) -- (-4,-.5-2/3);
\draw[dashed] (2,-.5) -- (3) -- (-4,-1.5);
\draw (-.5,1.5) -- (0) -- (1.5,-.5);
\draw (.5,1.5) -- (1) -- (-1.75,-.75);
\draw (-1,0) -- (2,0);
\fill[yellow,opacity=.2] (-1,0) -- (1,0) -- (-5/3,-2/3);
\draw[thick,blue] (.5,.75) -- (2) -- (-4,-1-1/2);
\draw[thick,black!40!green] (2,.25) -- (3) -- (-4,-1-1/4);
\draw[red,->] (-2,-.5) to[bend right=40] (.8,-.2);
\draw[red,->] (-1,-.5) to[bend right=40] (-1.7,-.5);
\draw[->] (2.5,0) -- (3,0);
\end{tikzpicture}
\begin{tikzpicture}[scale=1]
\draw (1,0) -- (0,1) -- (-1,0) -- (-5,-1) -- cycle;
\coordinate[fill,cross,inner sep=2pt,rotate=45] (0) at (.5,.5);
\coordinate[fill,cross,inner sep=2pt,rotate=45] (1) at (-.5,.5);
\coordinate[fill,cross,inner sep=2pt,rotate=14.04] (2) at (-3,-.5);
\coordinate[fill,cross,inner sep=2pt,rotate=9.46] (3) at (-2,-.5);
\draw (-.5,1.5) -- (0) -- (1.5,-.5);
\draw (.5,1.5) -- (1) -- (-1.5,-.5);
\fill[yellow,opacity=.2] (-1,0) -- (0,1) -- (.6,.4);
\draw[thick,black!40!green] (1,.5) -- (2) -- (-6,-1.25);
\draw[thick,blue] (2,-.5+2/3) -- (3) -- (-2-6/2-6/6,-.5-1/2-1/6);
\draw[thick,blue] (-.5,2) -- (.5,2);
\draw[dashed] (-.5,2.2) -- (-.5,.5) -- (-6,.5);
\draw[dashed] (.5,2.2) -- (.5,.5) -- (2,.5);
\draw[dashed] (-6,-.5) -- (-3,-.5) -- (-6,-1.1);
\draw[dashed] (2,-.5) -- (-2,-.5) -- (-6,-1.3);
\draw (0,-1.5);
\end{tikzpicture}
\caption{Scattering diagrams for $\mathbb{F}_3$ and $\mathbb{F}_5$ are related by a translation of rays. The CPS chamber (left) is related to the top chamber (right).}
\label{fig:relation}
\end{figure}

Under this identification, some chambers are related to each other, such that they carry the same theta functions. For example, the chamber below the vertex $(-1,0)$ is related to the chamber below the vertex $(1,0)$. We call the former CPS chamber, since it is considered in \cite{CPS} and the latter the top chamber, and write $\vartheta_1(\mathbb{F}_m)_{\text{CPS}}$ resp. $\vartheta_1(\mathbb{F}_m)_{\text{top}}$.

In particular, this shows that the theta functions for $\mathbb{F}_m$ and $\mathbb{F}_{m+2}$ are mutation equivalent.

This does not work for the relation between $\mathbb{F}_0$ and $\mathbb{F}_2$, but for all other Hirzebruch surfaces. In summary, we have the following.

\begin{proposition}
For $m\geq 1$ we have
\[ \vartheta_1(\mathbb{F}_m)_{\text{CPS}} = \vartheta_1(\mathbb{F}_m)_{\text{top}}. \]
\end{proposition}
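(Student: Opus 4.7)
The plan is to iterate the identification between the scattering diagrams of $\mathbb{F}_m$ and $\mathbb{F}_{m+2}$ constructed in the preceding paragraphs, which transports broken lines in a weight- and direction-preserving fashion and hence preserves theta functions on identified chambers. By that construction the CPS chamber of $\mathbb{F}_m$ (below the vertex $(-1,0)$) is sent to the top chamber of $\mathbb{F}_{m+2}$ (below the vertex $(1,0)$), giving
\[
\vartheta_1(\mathbb{F}_m)_{\mathrm{CPS}} \;=\; \vartheta_1(\mathbb{F}_{m+2})_{\mathrm{top}}.
\]

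My first step is to establish the complementary identification, namely that the top chamber of $\mathbb{F}_m$ is sent to the CPS chamber of $\mathbb{F}_{m+2}$. This is the mirror image of the first identification, visible in Figure~\ref{fig:relation} as the simultaneous translation of the two triangular regions in opposite directions: as the slope-$1/(m-1)$ affine singularity slides rightward (carrying the CPS region of $\mathbb{F}_m$ into the top region of $\mathbb{F}_{m+2}$), the slope-$1/(m+1)$ singularity slides leftward across the slope-$(-1)$ ray and carries the top region of $\mathbb{F}_m$ into the CPS region of $\mathbb{F}_{m+2}$. Checking this amounts to tracking the monodromies of the two translations symmetrically. The outcome is
\[
\vartheta_1(\mathbb{F}_m)_{\mathrm{top}} \;=\; \vartheta_1(\mathbb{F}_{m+2})_{\mathrm{CPS}}.
\]

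Combining these two equalities, the proposition for $\mathbb{F}_m$ is equivalent to the proposition for $\mathbb{F}_{m+2}$. I would therefore close the argument by induction on $m$, with base cases $m=1$ (odd parity) and $m=2$ (even parity); these are the smallest indices at which the $\mathbb{F}_m \leftrightarrow \mathbb{F}_{m+2}$ identification applies, and starting the induction higher than $m=0$ matches the restriction in the statement, which reflects the breakdown of the identification at $\mathbb{F}_0 \leftrightarrow \mathbb{F}_2$ noted in the preceding paragraph. For $m=1$, $\mathbb{F}_1$ is Fano and only finitely many walls reach the CPS and top chambers, so one can enumerate broken lines ending in each chamber directly from the Hori-Vafa potential $W_\Sigma$ and verify equality term by term.

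The main obstacle is the $m=2$ base case, where every affine singularity generates an infinite dense region of rays and both theta functions are genuine infinite sums. The cleanest route is to use Corollary~\ref{cor:theta} together with Theorem~\ref{thm:tropcorr}: after completing broken lines ending in each chamber to tropical curves via Proposition~\ref{prop:complete}, the equality reduces to an identity among $2$-marked logarithmic Gromov-Witten invariants of $(\mathbb{F}_2, D)$. These invariants are accessed from either chamber by the same tropical correspondence, so the identity should follow from comparing the two enumerations of tropical curves; alternatively, one can compare both theta functions with $\vartheta_1(\mathbb{F}_2)_\infty$ via explicit wall-crossing (mutation) sequences from each of the CPS and top chambers to an unbounded chamber, using Proposition~\ref{prop:crossmut} to package the wall-crossings as mutations.
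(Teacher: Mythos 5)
Your opening step is exactly the paper's argument: the translation of the two bottom affine singularities (applying the monodromy $(m-1,1)\mapsto(-(m+3),-1)$ to the slope-$1/(m-1)$ singularity and sliding the slope-$1/(m+1)$ singularity past the slope-$(-1)$ ray) identifies the scattering diagram of $\mathbb{F}_m$ with that of $\mathbb{F}_{m+2}$ and carries the CPS chamber of $\mathbb{F}_m$ onto the top chamber of $\mathbb{F}_{m+2}$. That single identification, together with the fact that theta functions depend only on the chamber, is the paper's entire justification; equivalently it reads the proposition as $\vartheta_1(\mathbb{F}_m)_{\text{top}}=\vartheta_1(\mathbb{F}_{m-2})_{\text{CPS}}$, with the two chambers of one and the same diagram then connected by the three explicit mutations of \S\ref{S:mutate}. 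There is no induction in the paper, and none is needed.

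The genuine gap is your ``complementary identification'' $\vartheta_1(\mathbb{F}_m)_{\text{top}}=\vartheta_1(\mathbb{F}_{m+2})_{\text{CPS}}$, which your induction cannot do without and which is false. The operation is not symmetric: only the slope-$1/(m-1)$ singularity is moved \emph{all the way around} (wrapping through the monodromy to become the new slope-$1/(m+3)$ bottom singularity), while the slope-$1/(m+1)$ singularity merely shifts position without changing direction; there is no mirror-image move sending the top chamber of $\mathbb{F}_m$ to the CPS chamber of $\mathbb{F}_{m+2}$. Concretely, for $m=1$ your claim would give $\vartheta_1(\mathbb{F}_1)_{\text{top}}=\vartheta_1(\mathbb{F}_3)_{\text{CPS}}$; but $\mathbb{F}_1$ is Fano, so its top chamber lies in the central chamber where $\vartheta_1=W_\Sigma=t\bigl(x+y+z_1x^{-1}+z_1z_2x^{-1}y^{-1}\bigr)$ has four terms, whereas the paper computes $\vartheta_1(\mathbb{F}_3)_{\text{CPS}}=t\bigl(x+y+z_1x^{-1}+z_1z_2x^{-1}y(1+z_1x^{-1}y^{-1})^2\bigr)$ with six terms and a strictly larger Newton polytope (Figures \ref{fig:CPS} and \ref{fig:mut13}). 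More generally, combining your two identifications would force $\vartheta_1(\mathbb{F}_m)_{\text{CPS}}=\vartheta_1(\mathbb{F}_{m+4})_{\text{CPS}}$, contradicting Figure \ref{fig:mut35}. (Your claim only becomes true if ``$=$'' is weakened to mutation equivalence, but then the proposition itself is immediate from Proposition \ref{prop:crossmut} and the induction is superfluous.) With step 2 gone, the equivalence of the statement for $\mathbb{F}_m$ with that for $\mathbb{F}_{m+2}$ is not established and the inductive scheme collapses; as a lesser point, your worry about the $m=2$ base case is moot, since for $\mathbb{F}_2$ the CPS and top chambers coincide.
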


\subsection{CPS chamber} 
\label{S:CPS}

Note that $\vartheta_1(\mathbb{F}_m)_{\text{CPS}}$ is a correction of the Hori-Vafa potential $W_\Sigma$, meaning there is some Laurent polynomial $W'$ such that
\[ \vartheta_1(\mathbb{F}_m)_{\text{CPS}} = W_\Sigma + W'. \]
In particular, $\vartheta_1((\mathbb{F}_m,D))_{\text{CPS}}$ is a Laurent polynomial, as follows from Proposition \ref{prop:laurent}. For $m=2,3,4,5$ the theta function is computed in Figure \ref{fig:CPS}. 

\begin{figure}[h!]
\centering
\begin{tikzpicture}[scale=1.6]
\draw (1,0) -- (0,1) -- (-1,0) -- (-2,-1) -- cycle;
\coordinate[fill,cross,inner sep=2pt,rotate=45] (0) at (.5,.5);
\coordinate[fill,cross,inner sep=2pt,rotate=45] (1) at (-.5,.5);
\coordinate[fill,cross,inner sep=2pt,rotate=18.43] (2) at (-1.5,-.5);
\coordinate[fill,cross,inner sep=2pt,rotate=11.31] (3) at (-.5,-.5);
\draw[dashed] (.5,2) -- (0) -- (2,.5);
\draw[dashed] (-.5,2) -- (1) -- (-3,.5);
\draw[dashed] (-3,-.5) -- (2) -- (-3,-.5-1.5/2);
\draw[dashed] (2,-.5) -- (3) -- (-3,-.5-2.5/2);
\draw (-.5,1.5) -- (0) -- (1.5,-.5);
\draw (.5,1.5) -- (1) -- (-2.5,-1.5);
\draw (.5,1.5) -- (2) -- (-2.5,-1.5);
\draw (2,1/3) -- (3) -- (-3,-1-1/3);
\foreach \i in {3,5,...,49}
{
\draw (0,1) -- ({1/\i},2);
\draw (0,1) -- ({-1/\i},2);
}
\fill (0,1) -- (1/50,2) -- (-1/50,2) -- (0,1);
\foreach \i in {9,15,...,147}
{
\draw (-2,-1) -- (-3,{-1-1/2+1/\i});
\draw (-2,-1) -- (-3,{-1-1/2-1/\i});
}
\fill (-2,-1) -- (-3,-1-1/2+1/150) -- (-3,-1-1/2-1/150) -- (-2,-1);
\foreach \i in {1,2,...,20}
{
\draw (1,0) -- (2,{1/10+1/(5+5*\i)});
\draw (1,0) -- ({2-1/(5*\i)},-1/2);
}
\fill (1,0) -- (2,1/9) -- (2,-1/2) -- (1,0);
\fill[red] (-.95,-.42) circle (1pt);
\draw[red] (-.95,-.42) -- (-3,-.42);
\draw[red] (-3,-.32) node[left]{\tiny$z_1x^{-1}$};
\draw[red] (-3,-.52) node[left]{\tiny$z_1z_2x^{-1}$};
\draw[red] (-.95,-.42) -- (2,-.42) node[right]{\tiny$x$};
\draw[red] (-.95,-.42) -- (-.95,.05) -- (-3,.05) node[left]{\tiny$y$};
\draw[red] (-.95,-.42) -- (-3,-1.445) node[left]{\tiny$z_1^2z_2x^{-2}y^{-1}$};
\end{tikzpicture}
\\ [-5mm]
\begin{tikzpicture}[scale=1.6]
\draw (1,0) -- (0,1) -- (-1,0) -- (-3,-1) -- cycle;
\coordinate[fill,cross,inner sep=2pt,rotate=45] (0) at (.5,.5);
\coordinate[fill,cross,inner sep=2pt,rotate=45] (1) at (-.5,.5);
\coordinate[fill,cross,inner sep=2pt,rotate=18.43] (2) at (-2,-.5);
\coordinate[fill,cross,inner sep=2pt,rotate=11.31] (3) at (-1,-.5);
\draw[dashed] (.5,2) -- (0) -- (2,.5);
\draw[dashed] (-.5,2) -- (1) -- (-4,.5);
\draw[dashed] (-4,-.5) -- (2) -- (-4,-.5-2/3);
\draw[dashed] (2,-.5) -- (3) -- (-4,-1.5);
\draw (-.5,1.5) -- (0) -- (1.5,-.5);
\draw (.5,1.5) -- (1) -- (-1.75,-.75);
\draw (.5,.75) -- (2) -- (-4,-1-1/2);
\draw (2,.25) -- (3) -- (-4,-1-1/4);
\draw (-1,0) -- (2,0);
\foreach \i in {3,5,...,49}
{
\draw (0,1) -- ({1/\i},2);
\draw (0,1) -- ({-1/\i},2);
}
\fill (0,1) -- (1/50,2) -- (-1/50,2) -- (0,1);
\foreach \i in {9,15,...,147}
{
\draw (-3,-1) -- (-4,{-1-1/3+1/\i});
\draw (-3,-1) -- (-4,{-1-1/3-1/\i});
}
\fill (-3,-1) -- (-4,-1-1/3+1/150) -- (-4,-1-1/3-1/150) -- (-3,-1);
\foreach \i in {1,2,...,20}
{
\draw (1,0) -- (2,{1/8+1/(5+5*\i)});
\draw (1,0) -- ({2-1/(5*\i)},-1/2);
}
\fill (1,0) -- (2,1/7) -- (2,-1/2) -- (1,0);
\fill[red] (-.95,-.42) circle (1pt);
\draw[red] (-.95,-.42) -- (-4,-.42);
\draw[red] (-4,-.5) node[left]{\tiny$x^{-1}$};
\draw[red] (-.95,-.42) -- (2,-.42) node[right]{\tiny$x$};
\draw[red] (-.95,-.42) -- (-.95,.05) -- (-4,.05) node[left]{\tiny$y$};
\draw[red] (-.95,-.42) -- (-4,-1.43667);
\draw[red] (-4,-1.45) node[left]{\tiny$x^{-3}y^{-1}$};
\draw[red] (-.95,-.42) -- (-1.42,-.42) -- (-4,-1.28);
\draw[red] (-4,-1.25) node[left]{\tiny$2x^{-2}$};
\draw[red] (-.95,-.42) -- (-1.185,-.185) -- (-1.76,-.38) -- (-4,-.38);
\draw[red] (-4,-.3) node[left]{\tiny$x^{-1}y$};
\end{tikzpicture}
\\ [-5mm]
\begin{tikzpicture}[scale=1.6]
\draw (1,0) -- (0,1) -- (-1,0) -- (-4,-1) -- cycle;
\coordinate[fill,cross,inner sep=2pt,rotate=45] (0) at (.5,.5);
\coordinate[fill,cross,inner sep=2pt,rotate=45] (1) at (-.5,.5);
\coordinate[fill,cross,inner sep=2pt,rotate=26.57] (2) at (-2.5,-.5);
\coordinate[fill,cross,inner sep=2pt,rotate=14.04] (3) at (-1.5,-.5);
\draw[dashed] (.5,2) -- (0) -- (2,.5);
\draw[dashed] (-.5,2) -- (1) -- (-5,.5);
\draw[dashed] (-5,-.5) -- (2) -- (-5,-1.125);
\draw[dashed] (2,-.5) -- (3) -- (-5,-1.375);
\draw (-.5,1.5) -- (0) -- (1.5,-.5);
\draw (.5,1.5) -- (1) -- (-1.5,-.5);
\draw (.5,.5) -- (2) -- (-5,-.5-2.5/3);
\draw (2,-.5+3.5/5) -- (3) -- (-5,-.5-3.5/5);
\draw (-1,0) -- (2,0);
\draw (-1,0) -- (-.5,-.5);
\draw (-1,0) -- (.5,-.5);
\draw (-1,0) -- (1.5,-.5);
\foreach \i in {7,9,...,999}
{
\draw (-1,0) -- (2,-2.5/\i);
}
\draw (-1,0) -- (2,.5);
\foreach \i in {7,9,...,999}
{
\draw (-1,0) -- (2,2.5/\i);
}
\foreach \i in {3,5,...,49}
{
\draw (0,1) -- ({1/\i},2);
\draw (0,1) -- ({-1/\i},2);
}
\fill (0,1) -- (1/50,2) -- (-1/50,2) -- (0,1);
\foreach \i in {9,15,...,147}
{
\draw (-4,-1) -- (-5,{-1-1/4+1/\i});
\draw (-4,-1) -- (-5,{-1-1/4-1/\i});
}
\fill (-4,-1) -- (-5,-1-1/4+1/150) -- (-5,-1-1/4-1/150) -- (-4,-1);
\foreach \i in {1,2,...,20}
{
\draw (1,0) -- (2,{1/8+1/(5+5*\i)});
\draw (1,0) -- ({2-1/(5*\i)},-1/2);
}
\fill (1,0) -- (2,1/7) -- (2,-1/2) -- (1,0);
\fill[red] (-.95,-.32) circle (1pt);
\draw[red] (-.95,-.32) -- (-5,-.32) node[left]{\tiny$x^{-1}$};
\draw[red] (-.95,-.32) -- (2,-.32) node[right]{\tiny$x$};
\draw[red] (-.95,-.32) -- (-.95,.05) -- (-5,.05) node[left]{\tiny$y$};
\draw[red] (-.95,-.32) -- (-5,-1.3325);
\draw[red] (-5,-1.4) node[left]{\tiny$x^{-4}y^{-1}$};
\draw[red] (-.95,-.32) -- (-1.32,-.32) -- (-5,-1.245);
\draw[red] (-5,-1.2) node[left]{\tiny$3x^{-3}$};
\draw[red] (-.95,-.32) -- (-1.135,-.135) -- (-2.5+1/5,-.5+1/15) -- (-5,-.5+1/15);
\draw[red] (-5,-.5) node[left]{\tiny$3x^{-2}y$};
\draw[red] (-.95,-.32) -- (-.95,-.05) -- (2,-.05) node[right]{\tiny$y$};
\draw[red] (-.95,-.32) -- (-1.96,-.32) -- (-.68,.32) -- (2,.32) node[right]{\tiny$x^{-1}$};
\end{tikzpicture}
\\ [-5mm]
\begin{tikzpicture}[scale=1.6]
\draw (1,0) -- (0,1) -- (-1,0) -- (-5,-1) -- cycle;
\coordinate[fill,cross,inner sep=2pt,rotate=45] (0) at (.5,.5);
\coordinate[fill,cross,inner sep=2pt,rotate=45] (1) at (-.5,.5);
\coordinate[fill,cross,inner sep=2pt,rotate=14.04] (2) at (-3,-.5);
\coordinate[fill,cross,inner sep=2pt,rotate=9.46] (3) at (-2,-.5);
\draw (1) -- (-1.5,-.5);
\draw (2) -- (1,.5);
\draw (-5,-1) -- (-6,-1-1/6);
\draw (-5,-1) -- (-6,-1-1/5);
\draw (-5,-1) -- (-6,-1-1/4);
\draw (-4,-.5) -- (-2,.5);
\draw (.5,1) -- (.5-1/6,2);
\draw (-.5,.5) -- (.5,1.5);
\draw (0,1) -- (0,2);
\draw (1.5,.5) -- (2,.5-1/6);
\draw (.5,.5) -- (-.5,1.5);
\draw (-1.5,.5) -- (-4.5,-.5);
\draw (-3,0) -- (-6,0);
\draw (.43,1.43) -- (.43,2);
\draw (-1,0) -- (1.75,.5);
\draw (-1,0) -- (1.9,.5);
\draw (-1,0) -- (1.923,.5);
\draw (-1,0) -- (1.9265,.5);
\draw (-1,0) -- (-.75,-.5);
\draw (-1,0) -- (-.6,-.5);
\draw (-1,0) -- (-.577,-.5);
\draw (-1,0) -- (-.574,-.5);
\fill (-1,0) -- (1.927,.5) -- (2,.5) -- (2,-.5) -- (-.573,-1/2) -- (-1,0);
\foreach \i in {3,5,...,49}
{
\draw (0,1) -- ({1/\i},2);
\draw (0,1) -- ({-1/\i},2);
}
\fill (0,1) -- (1/50,2) -- (-1/50,2) -- (0,1);
\foreach \i in {30,31,...,40}
{
\draw (-5,-1) -- (-6,{-1-1/6-1/\i});
\draw (-5,-1) -- (-6,{-1-1/4+1/\i});
}
\fill (-5,-1) -- (-6,-1-1/6-1/40) -- (-6,-1-1/4+1/40) -- (-5,-1);
\draw[blue] (-1.2,-.3) -- (2,-.3);
\draw[blue] (-1.2,-.3) -- (-6,-.3);
\draw[blue] (-1.2,-.3) -- (-1.2,-.2) -- (-6,-.2);
\draw[blue] (-1.2,-.3) -- (-6,-1.26);
\draw[red] (-1.2,-.3) -- (-6,-1.1);
\draw[red] (-3.6,-.5) -- (-1.8,.4) -- (-6,.4);
\draw[red] (-1.2,-.3) -- (-1.3,-.3) -- (-5.5,-1);
\draw[red] (-3.5,-.5) -- (-1.5,.5) -- (-6,.5);
\draw[red] (-1.2,-.3) -- (-1.2-4/50,-.3+1/50) -- (-5,-.9);
\draw[red] (-3.4,-.5) -- (-1.4,.5);
\draw[red] (-.5,1.4) -- (-.4,1.4) -- (-.4,2);
\draw[red] (-1.2,-.3) -- (-1.2-3/50,-.3+2/50) -- (-4.5,-.8);
\draw[red] (-3.3,-.5) -- (-1.3,.5);
\draw[red] (-.5,1.3) -- (-.3,1.3) -- (-.3,2);
\draw[red] (-1.2,-.3) -- (-1.2-2/50,-.3+3/50) -- (-4,-.7);
\draw[red] (-3.2,-.5) -- (-1.2,.5);
\draw[red] (-.5,1.2) -- (-.2,1.2) -- (-.2,2);
\draw[red] (-1.2,-.3) -- (-1.2-1/50,-.3+4/50) -- (-3.5,-.6);
\draw[red] (-3.1,-.5) -- (-1.1,.5);
\draw[red] (-.5,1.1) -- (-.1,1.1) -- (-.1,2);
\draw[green] (-1.2,-.3) -- (-1.3,-.3) -- (-6,-1.24);
\draw[green] (-1.2,-.3) -- (-1.2-3/40,-.3+1/40) -- (-6,-1.24+1/40);
\draw[green] (-1.2,-.3) -- (-1.2-2/40,-.3+2/40) -- (-6,-1.24+2/40);
\draw[green] (-1.2,-.3) -- (-1.2-1/40,-.3+3/40) -- (-6,-1.24+3/40);
\draw[orange] (-1.2,-.3) -- (-2.2,-.3) -- (-.6,.5);
\draw[orange] (-.5,.6) -- (.4,.6) -- (.4,2);
\draw[orange] (-1.2,-.3) -- (-1.4,-.1) -- (-.8,.2) -- (2,.2);
\draw[orange] (-1.2,-.3) -- (-1.2,-.04) -- (-1.2+2/6,-.04+1/6) -- (2,-.04+1/6);
\fill[brown] (-1.2,-.3) circle (1pt);
\draw[dashed] (-.5,2) -- (-.5,.5) -- (-6,.5);
\draw[dashed] (.5,2) -- (.5,.5) -- (2,.5);
\draw[dashed] (-6,-.5) -- (-3,-.5) -- (-6,-1.1);
\draw[dashed] (2,-.5) -- (-2,-.5) -- (-6,-1.3);
\end{tikzpicture}
\caption{$\vartheta_1(\mathbb{F}_m)_{\text{CPS}}$ in the CPS chamber for $m=2,3,4,5$.}
\label{fig:CPS}
\end{figure}

For $m=2$ there is one broken line with ending monomial $\frac{z_1z_2}{x}$ that breaks at two rays at the same time, so that it looks like it doesn't break. Precisely, we get
\begin{align*}
\vartheta_1(\mathbb{F}_0)_{\text{CPS}} &= t \cdot \left( x+y+\frac{z_1}{x}+\frac{z_2}{y} \right) \\
\vartheta_1(\mathbb{F}_1)_{\text{CPS}} &= t \cdot \left( x+y+\frac{z_1}{x}+\frac{z_1z_2}{xy} \right) \\
\vartheta_1(\mathbb{F}_2)_{\text{CPS}} &= t \cdot \left( x+y+\frac{z_1}{x}+\frac{z_1z_2}{x}\left(1+\frac{z_1}{xy}\right) \right) \\
\vartheta_1(\mathbb{F}_3)_{\text{CPS}} &= t \cdot \left( x+y+\frac{z_1}{x}+\frac{z_1z_2y}{x}\left(1+\frac{z_1}{xy}\right)^2 \right) \\
\vartheta_1(\mathbb{F}_4)_{\text{CPS}} &= t \cdot \left( x+y+\frac{z_1}{x}+z_1z_2y\left(1+\frac{z_1}{xy}\right)+\frac{z_1z_2y^2}{x}\left(1+\frac{z_1}{xy}\right)^3 \right)
\end{align*}
One can easily check that the classical periods of $\vartheta_1(\mathbb{F}_m)_0$ agree for $m$ even/odd, as expected. Note that $\mathbb{F}_0$ and $\mathbb{F}_1$ are Fano, so $\vartheta_1$ equals the Hori-Vafa potential $W_\Sigma$. For $\mathbb{F}_2$ and $\mathbb{F}_3$ the potential was computed in \cite{Aur09} using symplectic methods and in \cite{CPS} using broken lines. Note the different conventions. In \cite{CPS} they don't give classes and have the wrong $t$-order. Moreover, they have one broken line that actually isn't there. Instead, one broken line appears with coefficient $2$.

\subsection{Mutations between Hirzebruch potentials} 
\label{S:mutate}

The CPS chamber of $\mathbb{F}_m$ is related to the top chamber of $\mathbb{F}_m$, which is equal to the CPS chamber of $\mathbb{F}_{m-2}$, by three mutations (see \S\ref{S:mutation}), along rays of slope $-1$, $1/(m-1)$ and $-1$, respectively, see Figure \ref{fig:relation2}. 

\begin{figure}[h!]
\centering
\begin{tikzpicture}[scale=1.5]
\draw (1,0) -- (0,1) -- (-1,0) -- (-5,-1) -- cycle;
\coordinate[fill,cross,inner sep=2pt,rotate=45] (0) at (.5,.5);
\coordinate[fill,cross,inner sep=2pt,rotate=45] (1) at (-.5,.5);
\coordinate[fill,cross,inner sep=2pt,rotate=14.04] (2) at (-3,-.5);
\coordinate[fill,cross,inner sep=2pt,rotate=9.46] (3) at (-2,-.5);
\draw (-.5,1.5) -- (0) -- (1.5,-.5);
\draw (.5,1.5) -- (1) -- (-1.5,-.5);
\draw (1,.5) -- (2) -- (-6,-1.25);
\draw (2,-.5+2/3) -- (3) -- (-2-6/2-6/6,-.5-1/2-1/6);
\fill[yellow,opacity=.2] (-1,0) -- (0,1) -- (.6,.4);
\fill[orange,opacity=.2] (-1,0) -- (-11/13,-4/13) -- (-7/5,-2/5);
\draw[red,->] (-1.1,-.3) to[bend left=90] (-.6,.2);
\draw[dashed] (-.5,2.2) -- (-.5,.5) -- (-6,.5);
\draw[dashed] (.5,2.2) -- (.5,.5) -- (2,.5);
\draw[dashed] (-6,-.5) -- (-3,-.5) -- (-6,-1.1);
\draw[dashed] (2,-.5) -- (-2,-.5) -- (-6,-1.3);
\draw (0,-1.5);
\end{tikzpicture}
\caption{Mutations from the CPS chamber to the top chamber.}
\label{fig:relation2}
\end{figure}

In this way, starting from $\mathbb{F}_m$ for $m=0,1,2$ we can successively compute $\vartheta_1$ for any Hirzebruch surface in any chamber, e.g. in the CPS chamber where $\vartheta_1$ is a correction of $W_\Sigma$. In Figures \ref{fig:mut13} and \ref{fig:mut35} we show how $\vartheta_1(\mathbb{F}_1)_{\text{CPS}}$ mutates to $\vartheta_1(\mathbb{F}_3)_{\text{CPS}}$ and then to $\vartheta_1(\mathbb{F}_5)_{\text{CPS}}$.

\begin{figure}[h!]
\centering
\begin{tikzpicture}[scale=.6]
\draw (1,0) -- (0,1) -- (-1,0) -- (-1,-1) -- cycle;
\draw (0,0) circle (2pt);
\fill (1,0) node[below]{$1$} circle (2pt);
\fill (0,1) node[below]{$1$} circle (2pt);
\fill (-1,0) node[below]{$1$} circle (2pt);
\fill (-1,-1) node[below]{$1$} circle (2pt);
\draw[red] (-1,0) -- (0,1);
\draw[->] (2,0) -- (3,0);
\end{tikzpicture}
\begin{tikzpicture}[scale=.6]
\draw (1,0) -- (2,1) -- (-1,0) -- (-1,-1) -- cycle;
\draw (0,0) circle (2pt);
\fill (1,0) node[below]{$1$} circle (2pt);
\fill (2,1) node[below]{$1$} circle (2pt);
\fill (-1,0) node[below]{$1$} circle (2pt);
\fill (-1,-1) node[below]{$1$} circle (2pt);
\draw[red] (1,0) -- (-1,-1);
\draw[->] (2,0) -- (3,0);
\end{tikzpicture}
\begin{tikzpicture}[scale=.6]
\draw (1,0) -- (2,1) -- (-1,0) -- (-3,-1) -- cycle;
\draw (0,0) circle (2pt);
\fill (1,0) node[below]{$1$} circle (2pt);
\fill (2,1) node[below]{$1$} circle (2pt);
\fill (-1,0) node[below]{$1$} circle (2pt);
\fill (-3,-1) node[below]{$1$} circle (2pt);
\draw[red] (1,0) -- (2,1);
\draw[->] (2,0) -- (3,0);
\end{tikzpicture}
\begin{tikzpicture}[scale=.6]
\draw (1,0) -- (0,1) -- (-1,1) -- (-3,-1) -- cycle;
\draw (0,0) circle (2pt);
\fill (1,0) node[below]{$1$} circle (2pt);
\fill (0,1) node[below]{$1$} circle (2pt);
\fill (-1,0) node[below]{$1$} circle (2pt);
\fill (-3,-1) node[below]{$1$} circle (2pt);
\fill (-1,1) node[below]{$1$} circle (2pt);
\fill (-2,0) node[below]{$2$} circle (2pt);
\end{tikzpicture}
\caption{Mutation from $\vartheta_1(\mathbb{F}_1)_{\text{CPS}}$ to $\vartheta_1(\mathbb{F}_3)_{\text{CPS}}$.}
\label{fig:mut13}
\end{figure}

\begin{figure}[h!]
\centering
\begin{tikzpicture}[scale=.6]
\draw (1,0) -- (0,1) -- (-1,1) -- (-3,-1) -- cycle;
\draw (0,0) circle (2pt);
\fill (1,0) node[below]{$1$} circle (2pt);
\fill (0,1) node[below]{$1$} circle (2pt);
\fill (-1,0) node[below]{$1$} circle (2pt);
\fill (-3,-1) node[below]{$1$} circle (2pt);
\fill (-1,1) node[below]{$1$} circle (2pt);
\fill (-2,0) node[below]{$2$} circle (2pt);
\draw[red] (-3,-1) -- (-1,1);
\draw[->] (2,0) -- (3,0);
\end{tikzpicture}
\begin{tikzpicture}[scale=.6]
\draw (1,0) -- (2,1) -- (-1,0) -- (-3,-1) -- cycle;
\draw (0,0) circle (2pt);
\fill (1,0) node[below]{$1$} circle (2pt);
\fill (2,1) node[below]{$1$} circle (2pt);
\fill (-1,0) node[below]{$1$} circle (2pt);
\fill (-3,-1) node[below]{$1$} circle (2pt);
\draw[red] (1,0) -- (-3,-1);
\draw[->] (2,0) -- (3,0);
\end{tikzpicture}
\begin{tikzpicture}[scale=.6]
\draw (1,0) -- (2,1) -- (-6,-1) -- (-5,-1) -- cycle;
\draw (0,0) circle (2pt);
\fill (1,0) node[below]{$1$} circle (2pt);
\fill (2,1) node[below]{$1$} circle (2pt);
\fill (-1,0) node[below]{$1$} circle (2pt);
\fill (-2,0) node[below]{$2$} circle (2pt);
\fill (-5,-1) node[below]{$1$} circle (2pt);
\fill (-6,-1) node[below]{$1$} circle (2pt);
\draw[red] (1,0) -- (2,1);
\end{tikzpicture}
\\[3mm]
\begin{tikzpicture}[scale=.6]
\draw[->] (-8,2) -- (-7,2);
\draw (1,0) -- (-1,4) -- (-6,-1) -- (-5,-1) -- cycle;
\draw (0,0) circle (2pt);
\fill (1,0) node[below]{$1$} circle (2pt);
\fill (0,1) node[below]{$1$} circle (2pt);
\fill (0,2) node[below]{$2$} circle (2pt);
\fill (-1,0) node[below]{$1$} circle (2pt);
\fill (-1,1) node[below]{$4$} circle (2pt);
\fill (-1,2) circle (1pt);
\fill (-1,3) node[below]{$1$} circle (2pt);
\fill (-2,0) node[below]{$2$} circle (2pt);
\fill (-2,1) circle (1pt);
\fill (-2,2) node[below]{$4$} circle (2pt);
\fill (-3,0) circle (1pt);
\fill (-3,1) node[below]{$6$} circle (2pt);
\fill (-4,0) node[below]{$4$} circle (2pt);
\fill (-5,-1) node[below]{$1$} circle (2pt);
\fill (-1,4) node[below]{$1$} circle (2pt);
\fill (-2,3) node[below]{$5$} circle (2pt);
\fill (-3,2) node[below]{$10$} circle (2pt);
\fill (-4,1) node[below]{$10$} circle (2pt);
\fill (-5,0) node[below]{$5$} circle (2pt);
\fill (-6,-1) node[below]{$1$} circle (2pt);
\end{tikzpicture}
\caption{Mutation from $\vartheta_1(\mathbb{F}_3)_{\text{CPS}}$ to $\vartheta_1(\mathbb{F}_5)_{\text{CPS}}$.}
\label{fig:mut35}
\end{figure}

\section{Blow ups of the plane} 
\label{S:Bl}

We consider blow up of $\mathbb{P}^2$ in $k\leq 8$ general points, $\text{Bl}^k\mathbb{P}^2=dP_{9-k}$. These are del Pezzo surfaces, i.e. smooth and Fano, of anticanonical degree $(-K_X)^2=9-k$. If the blow ups are toric, i.e., the successive blow up in torus fixed points, we obtain a toric variety $X_\Sigma$ to which $\text{Bl}^k\mathbb{P}^2$ admits a $\mathbb{Q}$-Gorenstein degeneration. For $k=1,2,3$ the toric variety is always Fano, for $k=4,5,6$ it might be Fano or non-Fano, depending on the choice of points, and for $k>6$ it is always non-Fano.

\subsection{Four points} 

Consider $\text{Bl}^4\mathbb{P}^2=dP_5$. Figure \ref{fig:bl4model} shows two different toric varieties $X_{\Sigma_1}$ and $X_{\Sigma_2}$, given by non-general blow ups of $\mathbb{P}^2$, to which $\text{Bl}^4\mathbb{P}^2$ admits a $\mathbb{Q}$-Gorenstein degeneration. 

\begin{figure}[h!]
\centering
\begin{tikzpicture}[scale=1.4]
\draw[thick,->] (0,0) -- (1,0);
\draw[thick,->] (0,0) -- (0,1);
\draw[thick,->] (0,0) -- (-1,-1);
\draw[->] (0,0) -- (1,1);
\draw[->] (0,0) -- (-1,0);
\draw[->] (0,0) -- (0,-1);
\draw[->] (0,0) -- (1,-1);
\end{tikzpicture}
\hspace{2cm}
\begin{tikzpicture}[scale=1.4]
\draw[thick,->] (0,0) -- (1,0);
\draw[thick,->] (0,0) -- (0,1);
\draw[thick,->] (0,0) -- (-1,-1);
\draw[->] (0,0) -- (0,-1);
\draw[->] (0,0) -- (1,-1);
\draw[->] (0,0) -- (2,-1);
\draw[->] (0,0) -- (3,-1);
\end{tikzpicture}
\caption{The fans of a Fano (left) and a non-Fano (right) $\mathbb{Q}$-Gorenstein degeneration of $\text{Bl}^4\mathbb{P}^2=dP_5$. The fan of $\mathbb{P}^2$ is bold.}
\label{fig:bl4model}
\end{figure}

$X_{\Sigma_1}$ is Fano while $X_{\Sigma_2}$ is non-Fano, since its spanning polytope is non-convex. They induce different toric degenerations of $\text{Bl}^4\mathbb{P}^2$, hence different scattering diagrams and theta functions, as shown in Figure \ref{fig:bl4theta}. $X_{\Sigma_1}$ is Fano, hence $\vartheta_1(X_{\Sigma_1})_0=W_\Sigma$ inside the central chamber. Figure \ref{fig:bl4mut} shows how the theta functions for the different cases are related by mutation.

\begin{figure}[h!]
\centering
\begin{tikzpicture}[scale=1.1]
\draw (-1,0) -- (0,1) -- (1,1) -- (1,0) -- (1,-1) -- (0,-1) -- (-1,-1) -- cycle;
\coordinate[fill,cross,inner sep=2pt,rotate=45] (0) at (-.5,.5);
\coordinate[fill,cross,inner sep=2pt,rotate=0] (1) at (.5,1);
\coordinate[fill,cross,inner sep=2pt,rotate=0] (2) at (-1,-.5);
\coordinate[fill,cross,inner sep=2pt,rotate=0] (3) at (1,.5);
\coordinate[fill,cross,inner sep=2pt,rotate=0] (4) at (1,-.5);
\coordinate[fill,cross,inner sep=2pt,rotate=0] (5) at (-.5,-1);
\coordinate[fill,cross,inner sep=2pt,rotate=0] (6) at (.5,-1);
\draw[dashed] (-2,.5) -- (0) -- (-.5,2);
\draw[dashed] (.5,2) -- (1) -- (1.5,2);
\draw[dashed] (-2,-.5) -- (2) -- (-2,-1.5);
\draw[dashed] (2,1.5) -- (3) -- (2,.5);
\draw[dashed] (2,-.5) -- (4) -- (2,-1.5);
\draw[dashed] (-1.5,-2) -- (5) -- (-.5,-2);
\draw[dashed] (.5,-2) -- (6) -- (1.5,-2);
\draw (-1,0) -- (-2,0);
\draw (0,1) -- (0,2);
\draw (1,1) -- (2,2);
\draw (1,-1) -- (2,-2);
\draw (-1,-1) -- (-2,-2);
\fill[red] (.2,-.1) circle (1.5pt);
\draw[red] (.2,-.1) -- (2,-.1);
\draw[red] (.2,-.1) -- (-2,-.1);
\draw[red] (.2,-.1) -- (.2,2);
\draw[red] (.2,-.1) -- (.2,-2);
\draw[red] (.2,-.1) -- (-1.7,-2);
\draw[red] (.2,-.1) -- (2,1.7);
\draw[red] (.2,-.1) -- (2,-1.9);
\end{tikzpicture}
\hspace{1cm}
\begin{tikzpicture}[scale=1.5]
\draw (-1,-1) -- (0,1) -- (1,0) -- (3,-1) -- cycle;
\coordinate[fill,cross,inner sep=2pt,rotate=-26.57] (0) at (-.5,0);
\coordinate[fill,cross,inner sep=2pt,rotate=45] (1) at (.5,.5);
\coordinate[fill,cross,inner sep=2pt,rotate=-26.57] (2) at (2,-.5);
\coordinate[fill,cross,inner sep=2pt,rotate=0] (3) at (-.5,-1);
\coordinate[fill,cross,inner sep=2pt,rotate=0] (4) at (.5,-1);
\coordinate[fill,cross,inner sep=2pt,rotate=0] (5) at (1.5,-1);
\coordinate[fill,cross,inner sep=2pt,rotate=0] (6) at (2.5,-1);
\draw[dashed] (-.5,1.5) -- (0) -- (-1.5,-1);
\draw[dashed] (.5,1.5) -- (1) -- (4.5,.5);
\draw[dashed] (4.5,-.5) -- (2) -- (4.5,-.5-2.5/3);
\draw[dashed] (-1,-1.5) -- (3) -- (-.5,-1.5);
\draw[dashed] (.5,-1.5) -- (4) -- (1,-1.5);
\draw[dashed] (2,-1.5) -- (5) -- (2.5,-1.5);
\draw[dashed] (3.5,-1.5) -- (6) -- (4,-1.5);
\draw (0,1) -- (0,1.5);
\draw (-1,-1) -- (-1.5,-1.5);
\draw (3,-1) -- (4.5,-1.5);
\draw (1,0) -- (-.5,0);
\draw (1) -- (2.5,-1.5);
\draw (2) -- (-.5,.75);
\fill[red] (.2,-.2) circle (1pt);
\draw[red] (.2,-.2) -- (4.5,-.2);
\draw[red] (.2,-.2) -- (.2,1.5);
\draw[red] (.2,-.2) -- (.2-1.3,-.2-1.3);
\draw[red] (.2,-.2) -- (.2+0,-.2-1.3);
\draw[red] (.2,-.2) -- (.2+1.3,-.2-1.3);
\draw[red] (.2,-.2) -- (.2+2.6,-.2-1.3);
\draw[red] (.2,-.2) -- (.2+3.9,-.2-1.3);
\draw[red] (.2,-.2) -- (.4,0) -- (.4,1.5);
\draw[red] (.2,-.2) -- (1.2,-.2) -- (2.8,-1) -- (4.3,-1.5);
\draw[red] (.2,-.2) -- (1.2,-.2) -- (1.8,-.4) -- (4.5,-.4);
\end{tikzpicture}
\caption{Blow up in $4$ points.}
\label{fig:bl4theta}
\end{figure}

\begin{figure}[h!]
\centering
\begin{tikzpicture}[scale=.7]
\draw (1,0) -- (1,1) -- (0,1) -- (-1,0) -- (-1,-1) --(1,-1) -- cycle;
\draw (0,0) circle (2pt);
\fill (1,0) node[below]{$2$} circle (2pt);
\fill (1,1) node[below]{$1$} circle (2pt);
\fill (0,1) node[below]{$1$} circle (2pt);
\fill (-1,0) node[below]{$1$} circle (2pt);
\fill (-1,-1) node[below]{$1$} circle (2pt);
\fill (0,-1) node[below]{$2$} circle (2pt);
\fill (1,-1) node[below]{$1$} circle (2pt);
\draw[red] (0,1) -- (1,1);
\draw[->] (2,0) -- (3,0);
\end{tikzpicture}
\begin{tikzpicture}[scale=.7]
\draw (-1,-1) -- (-1,0) --(0,1) -- (2,-1) -- cycle;
\draw (0,0) circle (2pt);
\fill (-1,0) node[below]{$1$} circle (2pt);
\fill (0,1) node[below]{$1$} circle (2pt);
\fill (1,0) node[below]{$2$} circle (2pt);
\fill (-1,-1) node[below]{$1$} circle (2pt);
\fill (0,-1) node[below]{$3$} circle (2pt);
\fill (1,-1) node[below]{$3$} circle (2pt);
\fill (2,-1) node[below]{$1$} circle (2pt);
\draw[red] (-1,-1) -- (-1,0);
\draw[->] (2,0) -- (3,0);
\end{tikzpicture}
\begin{tikzpicture}[scale=.7]
\draw (-1,-1) -- (0,1) -- (2,1) -- (2,-1) -- cycle;
\draw (0,0) circle (2pt);
\fill (0,1) node[below]{$1$} circle (2pt);
\fill (1,1) node[below]{$2$} circle (2pt);
\fill (1,0) node[below]{$5$} circle (2pt);
\fill (1,-1) node[below]{$3$} circle (2pt);
\fill (2,1) node[below]{$1$} circle (2pt);
\fill (2,0) node[below]{$2$} circle (2pt);
\fill (2,-1) node[below]{$1$} circle (2pt);
\fill (-1,-1) node[below]{$1$} circle (2pt);
\fill (0,-1) node[below]{$3$} circle (2pt);
\draw[red] (1,1) -- (2,1);
\draw[->] (3,0) -- (4,0);
\end{tikzpicture}
\begin{tikzpicture}[scale=.7]
\draw (-1,-1) -- (0,1) -- (1,1) -- (3,-1) -- cycle;
\draw (0,0) circle (2pt);
\fill (0,1) node[below]{$1$} circle (2pt);
\fill (1,1) node[below]{$1$} circle (2pt);
\fill (1,0) node[below]{$5$} circle (2pt);
\fill (2,0) node[below]{$2$} circle (2pt);
\fill (-1,-1) node[below]{$1$} circle (2pt);
\fill (0,-1) node[below]{$4$} circle (2pt);
\fill (1,-1) node[below]{$6$} circle (2pt);
\fill (2,-1) node[below]{$4$} circle (2pt);
\fill (3,-1) node[below]{$1$} circle (2pt);
\end{tikzpicture}
\caption{Mutation from $\vartheta_1(X_{\Sigma_1})_0$ to $\vartheta_1(X_{\Sigma_2})_0$.}
\label{fig:bl4mut}
\end{figure}

\subsection{Seven points} 

Consider $\text{Bl}^7\mathbb{P}^2=dP_2$. This does not admit a $\mathbb{Q}$-Gorenstein degeneration to a Fano or semi-fano toric variety, hence lies outside the scope of previous works. 
Figure \ref{fig:bl7theta} shows $\vartheta_1(dP_2)$ in a chamber close to the origin and how it is computed. It is mutation equivalent to the maximally mutable Laurent polynomial for $dP_2$ found in \cite{KNP}, see \cite{ACH}, Figure 1, case $S_2^2$.

\begin{figure}[h!]
\centering
\begin{tikzpicture}[scale=1]
\draw (-1,-1) -- (3,-1) -- (-1,3) -- cycle;
\draw (0,0) circle (2pt);
\fill (-1,-1) node[below]{$1$} circle (2pt);
\fill (0,-1) node[below]{$4$} circle (2pt);
\fill (1,-1) node[below]{$6$} circle (2pt);
\fill (2,-1) node[below]{$4$} circle (2pt);
\fill (3,-1) node[below]{$1$} circle (2pt);
\fill (-1,0) node[below]{$4$} circle (2pt);
\fill (1,0) node[below]{$8$} circle (2pt);
\fill (2,0) node[below]{$4$} circle (2pt);
\fill (-1,1) node[below]{$6$} circle (2pt);
\fill (0,1) node[below]{$8$} circle (2pt);
\fill (1,1) node[below]{$6$} circle (2pt);
\fill (-1,2) node[below]{$4$} circle (2pt);
\fill (0,2) node[below]{$4$} circle (2pt);
\fill (-1,3) node[below]{$1$} circle (2pt);
\draw (0,-3);
\end{tikzpicture}
\hspace{1cm}
\begin{tikzpicture}[scale=1.5]
\draw (-1,-1) -- (-1,2) -- (0,1) -- (1,1) -- (1,0) -- (2,-1) -- cycle;
\coordinate[fill,cross,inner sep=2pt,rotate=45] (0) at (-.5,1.5);
\coordinate[fill,cross,inner sep=2pt,rotate=0] (1) at (.5,1);
\coordinate[fill,cross,inner sep=2pt,rotate=0] (2a) at (-1,.5);
\coordinate[fill,cross,inner sep=2pt,rotate=0] (2b) at (-1,1.5);
\coordinate[fill,cross,inner sep=2pt,rotate=0] (2) at (-1,-.5);
\coordinate[fill,cross,inner sep=2pt,rotate=0] (3) at (1,.5);
\coordinate[fill,cross,inner sep=2pt,rotate=0] (4) at (1.5,-.5);
\coordinate[fill,cross,inner sep=2pt,rotate=0] (5) at (-.5,-1);
\coordinate[fill,cross,inner sep=2pt,rotate=0] (6) at (.5,-1);
\coordinate[fill,cross,inner sep=2pt,rotate=0] (7) at (1.5,-1);
\draw[dashed] (-.5,3) -- (0) -- (-1.25,3);
\draw[dashed] (.5,3) -- (1) -- (2.5,3);
\draw[dashed] (-2,.5) -- (2a) -- (-2,1.5);
\draw[dashed] (-2,2.5) -- (2b) -- (-1.75,3);
\draw[dashed] (-2,-.5) -- (2) -- (-2,-1.5);
\draw[dashed] (3,2.5) -- (3) -- (3,.5);
\draw[dashed] (3,-.5) -- (4) -- (3,-1.25);
\draw[dashed] (-1.5,-2) -- (5) -- (-.5,-2);
\draw[dashed] (.5,-2) -- (6) -- (1.5,-2);
\draw[dashed] (2.5,-2) -- (7) -- (3,-1.75);
\draw (1,0) -- (-2,0);
\draw (0,1) -- (0,-2);
\draw (-1,-1) -- (-2,-2);
\draw (1,1) -- (3,3);
\draw (2,-1) -- (3,-1.5);
\draw (-1,2) -- (-1.5,3);
\draw (0,0) -- (-2,-2);
\draw (1,.5) -- (1,-1.5);
\draw (.5,1) -- (-1.5,1);
\draw (1,0) -- (0,1);
\fill[red] (.1,.15) circle (1pt);
\draw[red] (.1,.15) -- (3,.15);
\draw[red] (.1,.15) -- (.1,3);
\draw[red] (.1,.15) -- (-2,.15);
\draw[red] (.1,.15) -- (.1,-2);
\draw[red] (.1,.15) -- (-2,-1.95);
\draw[red] (.1,.15) -- (-2,2.25);
\draw[red] (.1,.15) -- (2.25,-2);
\draw[red] (.1,.15) -- (-1.325,3);
\draw[red] (.1,.15) -- (3,-1.275);
\draw[red] (.1,.15) -- (2.95,3);
\draw[red] (.1,.15) -- (.1,0) -- (-1.9,-2);
\draw[red] (.1,.15) -- (.25,0) -- (.25,-2);
\draw[red] (.1,.15) -- (.4,0) -- (2.4,-2);
\draw[red] (.1,.15) -- (0,.15) -- (-2,-1.85);
\draw[red] (.1,.15) -- (0,.25) -- (-2,.25);
\draw[red] (.1,.15) -- (0,.35) -- (-2,2.35);
\draw[red] (.1,.15) -- (.1,.9) -- (1,.9) -- (3,2.9);
\draw[red] (.1,.15) -- (.85,.15) -- (.85,1) -- (2.85,3);
\draw[red] (.1,.15) -- (.1,.9) -- (.2,1) -- (.2,3);
\draw[red] (.1,.15) -- (.85,.15) -- (1,.3) -- (3,.3);
\draw[red] (.1,.15) -- (.55,0) -- (1.45,-.45) -- (3,-.45);
\draw[red] (.1,.15) -- (0,.45) -- (-1.275,3);
\end{tikzpicture}
\caption{$\vartheta_1(dP_2)$ in a chamber close to the origin.}
\label{fig:bl7theta}
\end{figure}


\end{document}